\numberwithin{equation}{section}
\numberwithin{figure}{section}
\theoremstyle{plain}
\newtheorem{thm}{\protect\theoremname}
  \theoremstyle{definition}
  \newtheorem{defn}[thm]{\protect\definitionname}
  \theoremstyle{remark}
  \newtheorem{rem}[thm]{\protect\remarkname}
  \theoremstyle{plain}
  \newtheorem{prop}[thm]{\protect\propositionname}
  \theoremstyle{definition}
  \newtheorem{example}[thm]{\protect\examplename}
\date{\today}
\subjclass[2010]{Primary: 35K55, 35K92, 60H15; Secondary: 37L15, 45E10, 49J45}
  \providecommand{\definitionname}{Definition}
  \providecommand{\examplename}{Example}
  \providecommand{\propositionname}{Proposition}
  \providecommand{\remarkname}{Remark}
\providecommand{\theoremname}{Theorem}
\begin{document}

\title[Stability of solutions to SPDE]{Stability of solutions to stochastic partial differential equations}
\begin{abstract}
\noindent We provide a general framework for the stability of solutions to stochastic partial differential equations with respect to perturbations of the drift. More precisely, we consider stochastic partial differential equations with drift given as the subdifferential of a convex function and prove continuous dependence of the solutions with regard to random Mosco convergence of the convex potentials. In particular, we identify the concept of stochastic variational inequalities (SVI) as a well-suited framework to study such stability properties. The generality of the developed framework is then laid out by deducing Trotter type and homogenization results for stochastic fast diffusion and stochastic singular $p$-Laplace equations. In addition, we provide an SVI treatment for stochastic nonlocal $p$-Laplace equations and prove their convergence to the respective local models.
\end{abstract}

\author[B. Gess]{Benjamin Gess}
\address{Max-Planck Institute for Mathematics in the Sciences \\
Inselstra\ss{}e 22\\
04103 Leipzig\\
Germany}
\email{bgess@mis.mpg.de}

\author[J. M. T\"olle]{Jonas M. Tölle }

\address{Fakultät für Mathematik\\
Universität Bielefeld\\
Postfach 100131\\
33501 Bielefeld\\
Germany}

\email{jonasmtoelle@gmail.com}

\keywords{Stochastic variational inequality, nonlocal stochastic partial differential equations, singular-degenerate SPDE, Trotter type results, stability, homogenization, random Mosco convergence}

\thanks{J.M.T. gratefully acknowledges funding granted by the CRC 701 ``Spectral Structures and Topological Methods in Mathematics'' of the German Research Foundation (DFG)}

\maketitle

\section{Introduction}

We consider the stability of stochastic partial differential equations of the general type
\begin{equation}
dX_{t}\in-\partial\vp(X_{t})dt+B(X_{t})dW_{t}\label{eq:intro_gen_SPDE}
\end{equation}
with respect to perturbations of the convex, lower-semicontinuous potential $\vp$, defined on some separable Hilbert space $H$. Here, $W$ is a cylindrical Wiener process on a separable Hilbert space $U$ and $B:H\to L_{2}(U,H)$ are Lipschitz continuous diffusion coefficients. We are especially interested in applications to quasilinear, singular-degenerate SPDE, such as the stochastic singular $p$-Laplace equation 
\begin{equation}
dX_{t}\in\div\left(|\nabla X_{t}|^{p-2}\nabla X_{t}\right)dt+B(X_{t})dW_{t},\label{eq:intro_splp}
\end{equation}
with $p\in[1,2)$, which will serve as a model example in the introduction. In particular, this generalizes results obtained in \cite{BR13,BDPR09-4,GT11} on the multi-valued case of the stochastic total variation flow ($p=1$).

In the deterministic case, i.e.~$B\equiv0$ in \eqref{eq:intro_gen_SPDE}, the stability of solutions with respect to $\vp$ is well-understood \cite{Att78}. More precisely, for a sequence $\vp^{n}$ of convex, lower-semicontinuous functions on $H$ and corresponding solutions $X^{n}$ it is known that the convergence of $\vp^{n}$ to $\vp$ in Mosco sense (cf.~Appendix \ref{sec:Random-Mosco-convergence} below) implies the convergence of $X^{n}$ to $X$. 

In the stochastic case \eqref{eq:intro_gen_SPDE} much less is known and only particular examples could be treated so far \cite{CT12,C11-1,C11,C09,BBHT13} (cf.~Section \ref{sec:known_res} below). In particular, the singular nature of \eqref{eq:intro_splp} and the resulting low regularity of the solutions lead to difficulties in proving stability with respect to perturbations of the drift $\partial\vp$. In this work we introduce the notion of random Mosco convergence of convex, lower-semicontinuous functionals $\vp^{n}$ and prove that if $\vp^{n}\to\vp$ in random Mosco sense, then the corresponding solutions $X^{n}$ to \eqref{eq:intro_gen_SPDE} converge weakly, that is,
\[
X^{n}\rightharpoonup X\quad\text{in }L^{2}([0,T]\times\O;H).
\]
A key ingredient of the proof of this result is the right choice of a notion of a solution to \eqref{eq:intro_gen_SPDE}. Due to the low regularity of solutions to singular SPDE such as \eqref{eq:intro_splp} (especially for $p=1$), an appropriate notion of a solution needs to rely on little regularity only. We identify the SVI approach to SPDE to be a well-suited framework to study stability questions for SPDE of the type \eqref{eq:intro_gen_SPDE}. 

The abstract convergence results are then applied to a variety of examples, that become immediate consequences of the abstract theory. For the sake of the introduction we shall restrict to the model example of stochastic singular $p$-Laplace equations \eqref{eq:intro_splp}. We provide three classes of applications partially extending results from \cite{C09,C11,C11-1,CT12}:

\textit{Nonlocal approximation:} Consider stochastic singular nonlocal $p$-Laplace equations of the type{\small{
\begin{align}
dX_{t}^{\ve} & \in\left(\int_{\mcO}J^{\ve}\left(\cdot-\xi\right)|X_{t}^{\ve}(\xi)-X_{t}^{\ve}(\cdot)|^{p-2}(X_{t}^{\ve}(\xi)-X_{t}^{\ve}(\cdot))\, d\xi\right)dt+B(X_{t}^{\ve})\, dW_{t}\label{eq:intro:nonlocal_plp}
\end{align}
}}where $p\in[1,2)$,  $J:\R^{d}\to\R$ is a nonnegative, continuous, radial kernel and $J^{\ve}$ is an appropriate rescaling given by
\[
J^{\ve}(z)=\frac{C}{\ve^{p+d}}J\left(\frac{z}{\ve}\right),
\]
with $C$ being some normalization constant. For details see Section \ref{sec:to_local} below. We prove that the solutions $X^{\ve}$ to \eqref{eq:intro:nonlocal_plp} converge to the solution of the stochastic (local) $p$-Laplace equation \eqref{eq:intro_splp}. 

It should be noted that the natural Gelfand triple associated to \eqref{eq:intro:nonlocal_plp} is the trivial triple $V=L^{2}(\mcO)\subseteq H=L^{2}(\mcO)\subseteq V^{*}$, whereas for \eqref{eq:intro_splp} it is $V=(W^{1,p}\cap L^{2})(\mcO)\subseteq H=L^{2}(\mcO)\subseteq V^{*}$. Hence, the approximating solutions $X^{\ve}$ do not satisfy the regularity properties that would be required in order to identify their limit as a variational solution to \eqref{eq:intro_splp}. This lack of regularity makes the proof of convergence to the local model a difficult problem, well beyond existing techniques.

We further note that well-posedness for stochastic quasilinear, non-local SPDE such as \eqref{eq:intro:nonlocal_plp} is proven here for the first time. The developed SVI framework for \eqref{eq:intro:nonlocal_plp} provides a unified framework for all $p\in[1,2)$, in particular including the multi-valued case $p=1$. This joins the two active fields of nonlocal PDE and quasilinear SPDE, giving rise to new, intriguing questions such as convergence to local limits (cf. Section \ref{sec:to_local} below) as well as ergodicity and convergence of invariant measures of nonlocal SPDE, which is addressed in the subsequent work \cite{GT15-2}.

In the deterministic case (i.e.~$B\equiv0$ in \eqref{eq:intro_splp}), nonlocal $p$-Laplace equations have been treated in detail in \cite{AMRT08,AMRT09,AVMRT10,AMRT11} and the references therein. We note that the approach to nonlocal PDE developed in these works is based on the Crandall-Ligget approach to accretive PDE, an approach not applicable in the stochastically perturbed case. We identify the SVI approach to provide an appropriate alternative to prove well-posedness for nonlocal SPDE. 

\textit{Trotter type results: }Consider stochastic generalized $p$-Laplace equations of the type
\begin{equation}
dX_{t}\in\div\phi\left(\nabla X_{t}\right)dt+B(X_{t})dW_{t},\label{eq:intro_trotter}
\end{equation}
where $\phi=\partial\psi$ and $\psi:\R^{d}\to\R_{+}$ is a convex, continuous function with sublinear growth. Assuming $\psi^{n}\to\psi$ in Mosco sense and $\limsup_{n\to\infty}\psi^{n}(z)\le\psi(z)$ for all $z\in\R^{d}$ we prove that the corresponding solutions to \eqref{eq:intro_trotter} converge. In particular, this implies continuous dependence of the solutions to \eqref{eq:intro_splp} on the parameter $p\in[1,2)$. This partially generalizes \cite{C09,C11-1,CT12}.

\textit{Periodic homogenization: }Consider 
\begin{align*}
dX_{t}^{\ve} & =\div\left(a\left(\frac{\xi}{\ve}\right)|\nabla X_{t}^{\ve}|^{p-2}\nabla X_{t}^{\ve}\right)\, dt+B(X_{t}^{\ve})\, dW_{t},
\end{align*}
with $a\in L^{\infty}(\R^{d})$ being periodic, $p\in(1,2)$. We prove that the corresponding solutions $X^{\ve}$ converge to the homogenized limit 
\begin{align*}
dX_{t} & =M_{Y}(a)\div\left(|\nabla X_{t}|^{p-2}\nabla X_{t}\right)\, dt+B(X_{t})\, dW_{t},
\end{align*}
where $M_{Y}(a):=\frac{1}{|Y|}\int_{Y}a(y)\, dy$. This solves the periodic homogenization problem for stochastic \emph{singular} $p$-Laplace equations while previously only degenerate cases, i.e.~$p\ge2$, could be treated. A key difference is the lack of the (compact) embedding of the associated energy space $V=W^{1,p}$ in $L^{2}$ in the singular case $p\in(1,2)$ which renders previous methods inapplicable. This partially generalizes \cite{C11-1,C11}.

\subsection{Overview of known results and comparison\label{sec:known_res}}

In the following we give a brief overview of known stability results for quasilinear SPDE with respect to perturbations of the drift.

In \cite{C09} Trotter type results for stochastic porous media equations with linear multiplicative noise
\begin{equation}
dX_{t}\in\D\psi(X_{t})dt+\sum_{k=1}^{\infty}f_{k}X_{t}d\b_{t}^{k}\label{eq:intro_SPME}
\end{equation}
on bounded, smooth domains $\mcO\subseteq\R^{d}$ with $d\le3$ and $f_{k}\in L^{\infty}(\mcO)$ decaying fast enough have been shown. More precisely, assuming $\psi^{n}\to\psi$ in Mosco sense and appropriate uniform growth conditions, strong convergence of the corresponding solutions $X^{n}$ to $X$ is proven in \cite{C09}. \\
In comparison, Trotter type results to \eqref{eq:intro_SPME} are immediate consequences of our abstract results, without restriction on the dimension $d\in\N.$ Moreover, we treat general diffusion coefficients $B$, thus dispensing with the linearity assumption on the noise in \eqref{eq:intro_SPME}. On the other hand, we only conclude weak convergence of solutions whereas strong convergence was shown in \cite{C09}.

In the subsequent work \cite{C11} these Trotter type results were extended to spatially dependent nonlinearities (again assuming $d\le3$ and linear multiplicative noise), i.e.
\begin{equation}
dX_{t}=\D\psi(\xi,X_{t})dt+\sum_{k=1}^{\infty}f_{k}X_{t}d\b_{t}^{k}\label{eq:intro_SPME-1}
\end{equation}
in order to allow applications to homogenization. In particular, periodic homogenization ($\ve\to0$) of the type
\begin{equation}
dX_{t}^{\ve}=\D\left(a\left(\frac{\xi}{\ve}\right)(X_{t}^{\ve})^{[m]}\right)dt+\sum_{k=1}^{\infty}f_{k}X_{t}^{\ve}d\b_{t}^{k}\label{eq:intro_SPME_homo}
\end{equation}
is shown in \cite{C11} for $m\in[1,5)$ and requiring stringent assumptions on the spatially dependent term $a$. We note that \cite{C11} could only treat the porous medium case ($m>1$) while the fast diffusion case ($m\in(0,1)$) was left as an open problem. An essential difference between these cases is, that in the porous medium case one has the compact embedding of the energy space $V=L^{m+1}$ in $H^{-1}$, while this ceases to be true for $m\in(0,1)$. Again, homogenization for \eqref{eq:intro_SPME_homo} with $m\in(0,1)$ becomes an immediate consequence of our abstract results in general dimension $d$ and for general diffusion coefficients $B$. In addition, our approach allows to relax the assumptions posed on $a$ (cf.~Section \ref{sec:homo_FDE} below). As above, we obtain weak convergence of solutions to \eqref{eq:intro_SPME_homo} to the homogenized SPDE, whereas strong convergence was deduced in \cite{C11} for a smaller class of SPDE.

In \cite{C11-1} a Trotter type theorem for variational SPDE with additive noise
\[
dX_{t}=-\nabla\vp(X_{t})dt+dW_{t}
\]
with respect to perturbations $\vp^{n}\to\vp$ has been shown. For the notion $\nabla\vp$, i.e.~the Gâteaux differential of $\vp$ on $V=\mcD(\vp)$, see \cite{C11-1}. As a crucial assumption, in \cite{C11-1}, the existence of an underlying uniform (in $n$) Gelfand triple $V\subseteq H\subseteq V^{*}$ has been assumed. Roughly speaking, this corresponds to assuming uniform domains for the potentials $\vp^{n}$, that is, $V=\mcD(\vp^{n})$ for all $n\in\N$. While such a condition is satisfied by applications in periodic homogenization, it is not satisfied by Trotter type results as in \eqref{eq:intro_SPME}, neither for nonlocal approximations such as \eqref{eq:intro:nonlocal_plp}. Similarly, in \cite{BBHT13} weak convergence of solutions $X^{n}$ to SPDE of the type
\[
dX_{t}^{n}\in-\partial\vp^{n}(X_{t})dt+\sum_{j=1}^{N}B_{j}^{n}X_{t}^{n}\circ d\b_{t}^{j}
\]
with $B_{j}^{n}$ being linear, commuting operators and $\vp^{n}$ allowing a uniform Gelfand triple was shown. In \cite{C11-1} these abstract results were then used to analyze the periodic homogenization problem for $p$-Laplace equations of the type
\[
dX_{t}^{\ve}=\div\left(a\left(\frac{\xi}{\ve},\nabla X_{t}^{\ve}\right)\right)dt+dW_{t},
\]
assuming, besides several further assumptions, that $a$ is strictly elliptic and strongly monotone with linear growth. In particular, stochastic singular $p$-Laplace equations such as
\begin{equation}
dX_{t}^{\ve}=\div\left(a\left(\frac{\xi}{\ve}\right)|\nabla X_{t}^{\ve}|^{p-2}\nabla X_{t}^{\ve}\right)dt+B(X_{t}^{\ve})dW_{t},\label{eq:intro_plp_homo}
\end{equation}
with $p\in(1,2)$, could not be treated in \cite{C11-1}. In the present work we show that periodic homogenization for \eqref{eq:intro_plp_homo} becomes a direct consequence of our general stability results. This includes general multiplicative noise and singular-degenerate $p$-Laplace drifts, thus partially extending the results from \cite{C11-1}. As before, we deduce weak convergence, while strong convergence was shown in \cite{C11-1}.

The stability of singular $p$-Laplace equations with additive noise
\begin{equation}
dX_{t}\in\div\left(|\nabla X_{t}|^{p-2}\nabla X_{t}\right)dt+dW_{t},\label{eq:intro_plp_homo-1}
\end{equation}
with respect to $p\in[1,2)$ has been investigated in \cite{CT12}, where strong convergence of solutions has been shown, assuming $d\le2$. These results are complemented by the results given in the present paper, by allowing multiplicative noise, removing the dimensional restriction and by providing a general framework for stability of SPDE having stability of \eqref{eq:intro_plp_homo-1} with respect to $p$ as a straightforward consequence. 

For related results in deterministic situations we refer to \cite{LukPar15,KinPar10} and references therein.

Apart from the stability properties for SPDE obtained in this paper, we develop an SVI approach to new classes of quasilinear, singular-degenerate SPDE, such as stochastic nonlocal $p$-Laplace equations. We also prove well-posedness of SVI solutions for the stochastic total variation flow
\begin{equation}
dX_{t}\in\div\left(\sgn(\nabla X_{t})\right)dt+B(X_{t})dW_{t},\label{eq:intro_plp_homo-1-1}
\end{equation}

by means of a different method than used in \cite{BR13}. This significantly simplifies the proof of well-posedness and generalizes the well-posedness results developed in \cite{BR13} by removing dimensional restrictions and by allowing general multiplicative noise, whereas in \cite{BR13} only linear multiplicative noise could be treated.

We would also like to mention the recently developed operatorial approach to SPDE \cite{BarbuRoeckner2015} and the reformulation of SPDE in terms of optimal control problems \cite{Barbu2011,BBHT13}, following the Br\'ezis-Ekeland variational principle, which might prove useful to study stability of SPDE in the future.

\subsection{Structure of the paper}

In Section \ref{sec:generalities_SVI} we introduce the general framework of stochastic variational inequalities and provide the definition of random Mosco convergence. The main result of Section \ref{sec:generalities_SVI} is the proof of convergence of solutions provided random Mosco convergence of the associated potentials holds. In Section \ref{sec:plp} (Section \ref{sec:nonlocal_plp} resp.) well-posedness of SVI solutions to the stochastic (nonlocal resp.) $p$-Laplace equation is shown. Convergence of solutions to the stochastic nonlocal $p$-Laplace equation to the stochastic local $p$-Laplace equation is proven in Section \ref{sec:to_local}. In Section \ref{sec:trotter} Trotter type results are deduced for stochastic $p$-Laplace and stochastic fast diffusion equations. Homogenization results are presented in Section \ref{sec:homo}. In the Appendix, certain properties of Moreau-Yosida approximations are recalled and Mosco convergence results for integral functionals are provided.

\subsection{Notation}

In the following we work with generic constants $C\ge0$, $c>0$ that are allowed to change value from line to line and we write 
\[
A\lesssim B
\]
if there is a constant $C\ge0$ such that $A\le CB$. If $(E,d)$ is a metric space, $R>0$ and $x\in E$, then $B_{R}(x)$ denotes the open ball of radius $R$ centered at $x$. We set 
\[
r^{[m]}:=|r|^{m-1}r\quad\forall r\in\R.
\]

We denote the $(d-1)$-dimensional unit sphere in $\R^{d}$ by $S^{d-1}$ and the volume of the unit ball in $\R^{d}$ by $\s_{d}$. Further, we let
\[
\sgn(\xi):=\begin{cases}
\frac{\xi}{|\xi|} & \text{if }\xi\ne0\\
\overline{B_{1}(0)} & \text{if }\xi=0,
\end{cases}
\]
be the maximal monotone extension of the sign function.

For $m\ge1$ we let $L^{m}(\mcO)$ be the usual Lebesgue spaces with norm $\|\cdot\|_{L^{m}}$ and we shall often use the shorthand notation $L^{m}:=L^{m}(\mcO),\|\cdot\|_{m}:=\|\cdot\|_{L^{m}(\mcO)}$. For a function $v\in L^{m}(\mcO)$ we define its extension to $\R^{d}$ by
\[
\bar{v}(\xi)=\begin{cases}
v(\xi) & \text{if }x\in\mcO\\
0 & \text{otherwise.}
\end{cases}
\]
and its average value on a bounded set $\mcO\subseteq\R^{d}$ by
\[
M_{\mcO}(v):=\frac{1}{|\mcO|}\int_{\mcO}v(\xi)\, d\xi,
\]
where $|\mcO|:=\int_{\mcO}\,d\xi$.
We further let $H^{k}=H^{k}(\mcO)=W^{2,k}(\mcO)$ be the usual Sobolev space of order $k\in\N$, $H_{0}^{1}$ be the space of functions in $H^{1}$ with trace zero on $\partial\mcO$ and $H^{-1}$ the Hilbert space dual of $H_{0}^{1}$. For $u\in L^{1}(\mcO)$ we define the total variation semi-norm by
\[
\|u\|_{TV}:=\sup\left\{ \int_{\mcO}u\div\eta\, d\xi\,:\,\eta\in C_{0}^{\infty}(\mcO;\R^{d}),\,\|\eta\|_{L^{\infty}}\le1\right\} 
\]
and let $BV$ be the space of functions of bounded variation, that is, 
\[
BV:=\{u\in L^{1}(\mcO):\,\|u\|_{TV}<\infty\}.
\]

We say that a function $X\in L^{1}([0,T]\times\O;H)$ is $\F_{t}$-progressively measurable if $X1_{[0,t]}$ is $\mcB([0,t])\otimes\mcF_{t}$-measurable for all $t\in[0,T]$.

\section{Generalities on stochastic variational inequalities\label{sec:generalities_SVI}}

Let $H$, $U$ be separable Hilbert spaces and let $L_{2}(U,H)$ denote the space of linear Hilbert-Schmidt operators from $U$ to $H$. Let $\{W_{t}\}_{t\ge0}$ be a cylindrical Wiener process on $U$ modeled on a normal filtered probability space $(\Omega,\Fcal,\{\Fcal_{t}\}_{t\ge0},\P)$. We consider the SPDE
\begin{equation}
dX_{t}\in-\partial\vp(X_{t})\, dt+B(X_{t})\, dW_{t},\label{eq:general_gradient_SPDE}
\end{equation}
where $\partial\vp$ is the subdifferential of a lower semi-continuous (l.s.c.), convex, proper function $\vp:H\to[0,+\infty]$. Without loss of generality we assume $\vp(0)=0$. Further, let $B:H\to L_{2}(U,H)$ be Lipschitz continuous diffusion coefficients, that is, there exists an $L>0$ such that for all $x,y\in H$
\begin{equation}
\norm{B(x)-B(y)}_{L_{2}(U,H)}\le L\norm{x-y}_{H}.\label{eq:B-is-lipschitz}
\end{equation}
Let $S$ be a separable Hilbert space continuously and densely embedded in $H$, that is, $S\hookrightarrow H$.
\begin{defn}
\label{def:SPDE_SVI}Let $x_{0}\in L^{2}(\Omega,\mcF_{0};H)$, $T>0.$ An $\Fcal_{t}$-progressively measurable map $X\in L^{2}([0,T]\times\Omega;H)$ is said to be an \emph{SVI solution} to \eqref{eq:general_gradient_SPDE} if there exists a $C>0$ such that
\begin{enumerate}
\item {[}Regularity{]} 
\begin{equation}
\esssup_{t\in[0,T]}\E\|X_{t}\|_{H}^{2}+\E\int_{0}^{T}\vp(X_{r})dr\le C(\E\|x_{0}\|_{H}^{2}+1).\label{eq:SVI_energy}
\end{equation}

\item {[}Variational inequality{]} For every admissible test-function $Z\in L^{2}([0,T]\times\O;S)$, that is, there are $Z_{0}\in L^{2}(\Omega,\mcF_{0};H)$, $G\in L^{2}([0,T]\times\O;H)$, $F\in L^{2}([0,T]\times\O;L_{2}(U,H))$ $\Fcal_{t}$-progressively measurable such that 
\begin{equation}
Z_{t}:=Z_{0}+\int_{0}^{t}G_{r}\, dr+\int_{0}^{t}F_{r}\, dW_{r}\quad\forall t\in[0,T],\label{eq:Z_decomp}
\end{equation}
we have that
\begin{align}
 & \E e^{-Ct}\|X_{t}-Z_{t}\|_{H}^{2}+2\E\int_{0}^{t}e^{-Cr}\vp(X_{r})\, dr\nonumber \\
 & \le\E\|x_{0}-Z_{0}\|_{H}^{2}+2\E\int_{0}^{t}e^{-Cr}\vp(Z_{r})\, dr-2\E\int_{0}^{t}e^{-Cr}(G_{r},X_{r}-Z_{r})_{H}\, dr\label{eq:SVI}\\
 & +2\E\int_{0}^{t}e^{-Cr}\|F_{r}-B(Z_{r})\|_{L_{2}}^{2}\, dr,\nonumber 
\end{align}
for almost all $t\in[0,T]$.
\end{enumerate}

If, additionally, $X\in L^{2}(\Omega;C([0,T];H))$, we say that $X$ is a \emph{(time-)continuous SVI solution} to \eqref{eq:general_gradient_SPDE}.

\end{defn}
Definition \ref{def:SPDE_SVI} modifies notions of stochastic SVI solutions introduced in \cite{BR13,BDPR09-4,GR15,GR14}. These modifications are chosen in order to obtain a stable notion of solutions with regard to approximations of the subdifferential $\vp$. More precisely,
\begin{rem}
\label{rem:SVI_notions}~
\begin{enumerate}
\item In \cite{BR13} SVI solutions are defined as time-continuous SVI solutions in the sense of Definition \ref{def:SPDE_SVI} but satisfying \eqref{eq:SVI} only for the special case $F=B(Z)$. The advantage of the (more restrictive) condition \eqref{eq:SVI} is its stability with respect to approximations of the test-functions $Z$. For example, if $P:S\to S$ is a continuous linear operator, then $PZ$ is again a valid test-function in \eqref{eq:SVI}, while it does not necessarily satisfy \eqref{eq:Z_decomp} with $F=B(PZ)$.
\item Definition \ref{def:SPDE_SVI} introduces non-time continuous SVI solutions, assuming only $X\in L^{2}([0,T]\times\Omega;H)$. The point of this generalization is that this property proves to be stable under random Mosco convergence $\vp^{n}\to\vp$ (cf.~Definition \ref{def:random_Mosco} below), while the continuity condition $X\in L^{2}(\Omega,C([0,T];H))$ does not. 
\end{enumerate}
\end{rem}

We say that an $\mcF_{t}$-adapted process $X\in L^{2}(\Omega,C([0,T];H))$ is a strong solution to \eqref{eq:general_gradient_SPDE}, if there exists an $\eta\in L^{2}([0,T]\times\O;H)$ progressively measurable such that $\eta\in\partial\vp(X)$ a.e.~and 
\[
X_{t}=x_{0}-\int_{0}^{t}\eta_{r}dr+\int_{0}^{t}B(X_{r})dW_{r}\quad\P\text{-a.s.}
\]
for all $t\ge0.$ 
\begin{rem}
\label{rmk:varn_sol}If $X$ is a strong solution to \eqref{eq:general_gradient_SPDE}, then $X$ is a time-continuous SVI solution to \eqref{eq:general_gradient_SPDE}. The constant $C$ in \eqref{eq:SVI_energy}, \eqref{eq:SVI} can be chosen depending on $L$, $\|B(0)\|_{L_{2}(U,H)}$ only, where $L$ is as in \eqref{eq:B-is-lipschitz}.\end{rem}
\begin{proof}
(i): By Itô's formula and a standard localization argument:
\[
\E\|X_{t}\|_{H}^{2}=\E\|x_{0}\|_{H}^{2}-2\E\int_{0}^{t}(\eta_{r},X_{r})_{H}dr+\E\int_{0}^{t}\|B(X_{r})\|_{L_{2}(U,H)}^{2}dr.
\]
By the definition of the subdifferential $\partial\vp$ we have that
\begin{align*}
(-\eta,X)_{H} & =(\eta,0-X)_{H}\le-\vp(X)\quad dt\otimes \P-\text{a.e.}
\end{align*}
and by Lipschitz continuity of $B$
\[
\|B(X_{r})\|_{L_{2}(U,H)}^{2}\le C(1+\|X_{r}\|_{H}^{2}).
\]
Hence,
\[
\E\|X_{t}\|_{H}^{2}+\E\int_{0}^{t}\vp(X_{r})dr\lesssim\E\|x_{0}\|_{H}^{2}+\E\int_{0}^{t}\|X_{r}\|_{H}^{2}dr+t.
\]
Gronwall's Lemma finishes the proof of \eqref{eq:SVI_energy}.

(ii): Let $Z\in L^{2}(\Omega;C([0,T];H))$ be given by 
\begin{align}
Z_{t} & =Z_{0}+\int_{0}^{t}G_{r}dr+\int_{0}^{t}F_{r}dW_{r}\label{eq:test-eq}
\end{align}
for some $Z_{0}\in L^{2}(\Omega,\mcF_{0};H)$, $G\in L^{2}([0,T]\times\O;H)$, $F\in L^{2}([0,T]\times\O;L_{2}(U,H))$ progressively measurable. Then
\begin{align*}
d(X_{t}-Z_{t}) & =(-\eta_{t}-G_{t})dt+(B(X_{t})-F_{t})dW_{t}
\end{align*}
and Itô's formula implies that
\begin{align*}
e^{-Ct}\|X_{t}-Z_{t}\|_{H}^{2}= & \|x_{0}-Z_{0}\|_{H}^{2}+2\int_{0}^{t}e^{-Cr}(-\eta_{r}-G_{r},X_{r}-Z_{r})_{H}\, dr\\
 & +2\int_{0}^{t}e^{-Cr}(X_{r}-Z_{r},B(X_{r})-F_{r})_{H}\, dW_{r}\\
 & +\int_{0}^{t}e^{-Cr}\|B(X_{r})-F_{r}\|_{L_{2}(U,H)}^{2}\, dr\\
 & -C\int_{0}^{t}e^{-Cr}\|X_{r}-Z_{r}\|_{H}^{2}\, dr\quad\forall t\in[0,T].
\end{align*}
Since
\begin{align*}
\|B(X_{r})-F_{r}\|_{L_{2}(U,H)}^{2} & \le2\|B(X_{r})-B(Z_{r})\|_{L_{2}(U,H)}^{2}+2\|B(Z_{r})-F_{r}\|_{L_{2}(U,H)}^{2}\\
 & \le2L^{2}\|X_{r}-Z_{r}\|_{H}^{2}+2\|B(Z_{r})-F_{r}\|_{L_{2}(U,H)}^{2}
\end{align*}
taking expectations and choosing $C\ge2L^{2}$ yields
\begin{align*}
e^{-Ct}\E\|X_{t}-Z_{t}\|_{H}^{2}= & \E\|x_{0}-Z_{0}\|_{H}^{2}+2\E\int_{0}^{t}e^{-Cr}(-\eta_{r}-G_{r},X_{r}-Z_{r})_{H}\, dr\\
 & +2\E\int_{0}^{t}e^{-Cr}\|B(Z_{r})-F_{r}\|_{L_{2}(U,H)}^{2}\, dr\quad\forall t\in[0,T].
\end{align*}
Since $\eta\in\partial\vp(X)$ a.e.~we have that
\[
(-\eta_{r},X_{r}-Z_{r})_{H}\le\vp(Z_{r})-\vp(X_{r}),\quad dt\otimes d\P-\text{a.e.}
\]
which finishes the proof.
\end{proof}
We next establish the stability of SVI solutions with respect to random Mosco convergence of convex functionals $\vp^{n}$ in the following sense
\begin{defn}
\label{def:random_Mosco}We say that $\vp^{n}\to\vp$ in \emph{random Mosco sense} if
\begin{enumerate}
\item For every sequence $Z^{n}\in L^{2}([0,T]\times\O;H)$ such that $Z^{n}\rightharpoonup Z$ for some $Z\in L^{2}([0,T]\times\O;H)$ and all $\g\in L^{\infty}([0,T])$ non-negative
\[
\liminf_{n\to\infty}\E\int_{0}^{T}\g_{r}\varphi^{n}(Z_{r}^{n})dr\ge\E\int_{0}^{T}\g_{r}\varphi(Z_{r})dr.
\]

\item For every admissible test-function $Z\in L^{2}([0,T]\times\O;S)$ with $\vp(Z)\in L^{1}([0,T]\times\O)$ and $dZ=Gdt+FdW$ there exists a sequence of admissible test-functions $Z^{n}\in L^{2}([0,T]\times\O;S)$ with $dZ^{n}=G^{n}dt+F^{n}dW$ such that
\begin{align*}
Z_{0}^{n} & \to Z_{0}\quad\text{in }L^{2}(\O;H)\\
G^{n} & \to G\quad\text{in }L^{2}([0,T]\times\O;H)\\
F^{n} & \to F\quad\text{in }L^{2}([0,T]\times\O;L_{2}(U,H))
\end{align*}
for $n\to\infty$ and, for all $\g\in L^{\infty}([0,T])$ non-negative,
\begin{equation}
\limsup_{n\to\infty}\E\int_{0}^{T}\g_{r}\varphi^{n}(Z_{r}^{n})dr\le\E\int_{0}^{T}\g_{r}\varphi(Z_{r})dr.\label{eq:2ndMosco}
\end{equation}

\end{enumerate}
\end{defn}
In Appendix \ref{sec:Random-Mosco-convergence} we show that $\vp^{n}\to\vp$ in Mosco sense implies that Definition \ref{def:random_Mosco}, (i) is satisfied. Hence, the additional structure required in order to deal with the presence of the stochastic perturbation in \eqref{eq:general_gradient_SPDE} is reflected by Definition \ref{def:random_Mosco}, (ii) only. As it turns out, this property is easily verified in applications based on the following proposition.
\begin{prop}
\label{prop:random_Mosco}Let $\vp^{n}$, $\vp$ be convex, l.s.c., proper functions on $H$, such that $\vp^{n}\to\vp$ in Mosco sense. Suppose either of the following
\begin{enumerate}
\item For all $Z\in L^{2}([0,T]\times\O;S)$ and all $\g\in L^{\infty}([0,T])$ non-negative
\begin{equation}
\limsup_{n\to\infty}\E\int_{0}^{T}\g_{r}\varphi^{n}(Z)dr\le\E\int_{0}^{T}\g_{r}\varphi(Z)dr.\label{eq:2ndMosco-1}
\end{equation}

\item For some $C>0$,
\[
\limsup_{n\to\infty}\vp^{n}(u)\le\vp(u)\quad\forall u\in S
\]
and 
\begin{equation}
\vp^{n}(u)\le C(1+\vp(u)+\norm{u}_{S}^{2})\quad\forall u\in S.\label{eq:bound_for_vp-2}
\end{equation}

\end{enumerate}
Then $\vp^{n}\to\vp$ in random Mosco sense.\end{prop}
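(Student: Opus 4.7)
My plan is to verify the two clauses of Definition~\ref{def:random_Mosco} separately. The liminf clause (i) is claimed in Appendix~\ref{sec:Random-Mosco-convergence} to follow from ordinary Mosco convergence, so under either hypothesis of the proposition it comes for free. The entire content of the proposition therefore lies in producing, for a given admissible test-function $Z$, a recovery sequence of admissible test-functions satisfying the limsup bound \eqref{eq:2ndMosco}.

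The key observation is that we do not need to perturb $Z$ at all: I would simply take the \emph{constant} recovery sequence
\[
Z^{n}:=Z,\qquad Z_{0}^{n}:=Z_{0},\qquad G^{n}:=G,\qquad F^{n}:=F,
\]
so that the representation $dZ^{n}=G^{n}\,dt+F^{n}\,dW$ holds trivially and all three required convergences $Z_{0}^{n}\to Z_{0}$, $G^{n}\to G$, $F^{n}\to F$ hold in the respective $L^{2}$-spaces by equality. Admissibility is preserved because $Z$ was admissible. Thus everything reduces to checking the limsup inequality
\[
\limsup_{n\to\infty}\E\int_{0}^{T}\gamma_{r}\,\varphi^{n}(Z_{r})\,dr\le\E\int_{0}^{T}\gamma_{r}\,\varphi(Z_{r})\,dr
\]
for all non-negative $\gamma\in L^{\infty}([0,T])$.

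Under hypothesis (i), this is exactly the assumption \eqref{eq:2ndMosco-1} and there is nothing more to do. Under hypothesis (ii), I would argue by reverse Fatou (i.e.\ dominated convergence applied to $\gamma_{r}[C(1+\varphi(Z_{r})+\|Z_{r}\|_{S}^{2})-\varphi^{n}(Z_{r})]$). The pointwise assumption $\limsup_{n}\varphi^{n}(u)\le\varphi(u)$ for all $u\in S$ is applied for $\mathbb{P}\otimes dt$-a.e.\ $(\omega,r)$ to the value $u=Z_{r}(\omega)\in S$, giving the pointwise bound
\[
\limsup_{n\to\infty}\gamma_{r}\varphi^{n}(Z_{r})\le\gamma_{r}\varphi(Z_{r})\qquad \mathbb{P}\otimes dt\text{-a.e.}
\]
The integrable majorant needed for the reverse Fatou step is $\gamma_{r}\,C(1+\varphi(Z_{r})+\|Z_{r}\|_{S}^{2})$, which lies in $L^{1}([0,T]\times\Omega)$ because $\gamma$ is bounded, $\varphi(Z)\in L^{1}$ by assumption on the test-function, and $\|Z\|_{S}^{2}\in L^{1}$ since $Z\in L^{2}([0,T]\times\Omega;S)$; the bound \eqref{eq:bound_for_vp-2} majorises the nonnegative integrands $\gamma_{r}\varphi^{n}(Z_{r})$ uniformly in $n$.

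The one small point requiring care is the order of $\limsup$ and integration, which is precisely what reverse Fatou handles, and the fact that $\varphi^{n}\ge 0$ (since $\varphi^{n}(0)=0$ and $\varphi^{n}$ is convex with a minimum at $0$—or at worst bounded below) ensures that the integrals are well-defined. I do not expect genuine obstacles: the entire argument is a soft reduction, with the substance sitting in the pointwise hypothesis $\limsup_{n}\varphi^{n}(u)\le\varphi(u)$ and the linear domination \eqref{eq:bound_for_vp-2}, both of which are precisely what is needed to pass the Mosco limsup bound from pointwise in $u\in S$ to integrated against an $S$-valued test process.
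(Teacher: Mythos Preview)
Your proposal is correct and follows essentially the same approach as the paper: take the constant recovery sequence $Z^{n}\equiv Z$, so that under hypothesis (i) the limsup bound is immediate, and under hypothesis (ii) it follows from reverse Fatou using \eqref{eq:bound_for_vp-2} as the integrable majorant. The paper's proof is terser but identical in substance, and the liminf clause of Definition~\ref{def:random_Mosco} is indeed delegated to Appendix~\ref{sec:Random-Mosco-convergence} exactly as you anticipated.
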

\begin{proof}
(i): Obvious, choosing $Z^{n}\equiv Z$ in Definition \ref{def:random_Mosco}, (ii). 

(ii): By the reverse Fatou's inequality, using the bound \eqref{eq:bound_for_vp-2}, we obtain
\[
\limsup_{n\to\infty}\E\int_{0}^{T}\g_{r}\varphi^{n}(Z)dr\le\E\int_{0}^{T}\g_{r}\varphi(Z)dr,
\]
which concludes the proof by (i). 
\end{proof}
For example, let $\vp^{n}$ be the Moreau-Yosida approximation of a convex, l.s.c., proper function $\vp:H\to[0,\infty]$. Then Proposition \ref{prop:random_Mosco} implies that $\vp^{n}\to\vp$ in random Mosco sense, cf.~Proposition \ref{prop:ex-svi-sln} below.

We have the following general stability property of SVI solutions with respect to random Mosco convergence:
\begin{thm}
\label{thm:SVI_stability}Let $x_{0}\in L^{2}(\O,\mcF_{0};H)$ and $\vp^{n}$ be a sequence of convex, l.s.c., proper functions such that $\vp^{n}\to\vp$ in random Mosco sense. Let $X^{n}$ be SVI solutions to \eqref{eq:general_gradient_SPDE} for $\vp$ replaced by $\vp^{n}$ satisfying \eqref{eq:SVI_energy}, \eqref{eq:SVI} with a constant $C>0$ independent of $n$. Then there is an SVI solution $X$ to \eqref{eq:general_gradient_SPDE} and a subsequence $X^{n_{k}}$ such that 
\[
X^{n_{k}}\rightharpoonup X\quad\text{in }L^{2}([0,T]\times\O;H).
\]
If SVI solutions to \eqref{eq:general_gradient_SPDE} are unique, then the whole sequence $X^{n}$ converges weakly to $X$.\end{thm}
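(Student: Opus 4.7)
The overall plan is to exploit the uniform energy bound \eqref{eq:SVI_energy} for weak compactness in $L^{2}([0,T]\times\O;H)$, extract a limit, and verify that it is an SVI solution by passing to the limit inside \eqref{eq:SVI} after integrating in $t$ against an arbitrary non-negative $\gamma\in L^{\infty}([0,T])$. This time-weighting device is crucial: since \eqref{eq:SVI} is a pointwise-in-$t$ statement while $X$ is only known as an element of $L^{2}([0,T]\times\O;H)$, and since Definition \ref{def:random_Mosco} is formulated precisely in terms of weighted $t$-integrals, pairing the two via $\gamma$ is the natural bridge.

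First, \eqref{eq:SVI_energy} with a constant uniform in $n$ makes $\{X^{n}\}$ bounded in $L^{2}([0,T]\times\O;H)$. Since the subspace of $\mcF_{t}$-progressively measurable processes is closed and convex, hence weakly closed, one can extract a subsequence $X^{n_{k}}\rightharpoonup X$ with $X$ progressively measurable. For the regularity of $X$ in \eqref{eq:SVI_energy}, the essential supremum bound follows from weak lower semicontinuity of $u\mapsto\int_{A}\E\|u_{t}\|_{H}^{2}\,dt$ for every measurable $A\subseteq[0,T]$, which forces $\E\|X_{t}\|_{H}^{2}\le C(\E\|x_{0}\|_{H}^{2}+1)$ for a.e.\ $t$, while the bound on $\E\int_{0}^{T}\vp(X_{r})\,dr$ is an immediate consequence of Definition \ref{def:random_Mosco} (i) with $\gamma\equiv 1$.

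For the variational inequality, given an admissible test function $Z$ with $\vp(Z)\in L^{1}$ and $dZ=G\,dt+F\,dW$, Definition \ref{def:random_Mosco} (ii) supplies admissible $Z^{n}$ with $dZ^{n}=G^{n}\,dt+F^{n}\,dW$ such that the corresponding data converge strongly in the relevant $L^{2}$ spaces. It\^o's isometry then yields $Z^{n}\to Z$ strongly in $L^{2}([0,T]\times\O;H)$, and the Lipschitz bound \eqref{eq:B-is-lipschitz} gives $B(Z^{n})\to B(Z)$ strongly in $L^{2}([0,T]\times\O;L_{2}(U,H))$. I would write \eqref{eq:SVI} for $X^{n}$ tested against $Z^{n}$, multiply by a non-negative $\gamma\in L^{\infty}([0,T])$ and integrate in $t$, using Fubini to convert every nested time integral into a single $r$-integral with non-negative $L^{\infty}$ weight $\tilde\gamma(r):=e^{-Cr}\int_{r}^{T}\gamma_{t}\,dt$. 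Then I take $\liminf$ on the left and $\limsup$ on the right: the quadratic $\|X^{n}-Z^{n}\|_{H}^{2}$ term is weakly lsc (convex and continuous in the weighted $L^{2}$ norm); the $\vp^{n}(X^{n})$ term is controlled by Definition \ref{def:random_Mosco} (i) with weight $\tilde\gamma$; the $\vp^{n}(Z^{n})$ term by Definition \ref{def:random_Mosco} (ii) with the same weight; the cross term $(G^{n},X^{n}-Z^{n})_{H}$ passes to the limit by the standard strong/weak pairing; and $\|F^{n}-B(Z^{n})\|_{L_{2}}^{2}$ converges in $L^{1}$ by strong $L^{2}$ convergence. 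The resulting integrated inequality holds for every non-negative $\gamma\in L^{\infty}([0,T])$, from which \eqref{eq:SVI} for $X$ at a.e.\ $t$ follows by a standard measure-theoretic argument. Uniqueness then upgrades weak subsequential convergence to convergence of the full sequence by the usual subsequence argument.

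The main obstacle is exactly the mismatch between the a.e.-in-$t$ pointwise form of \eqref{eq:SVI} and the fact that weak convergence in $L^{2}([0,T]\times\O;H)$ offers no control on individual time slices $X_{t}$. Any attempt to pass to the limit in \eqref{eq:SVI} at a fixed $t$ therefore fails, and the test-against-$\gamma$ device, whose compatibility with the convex energy terms has been built into the very formulation of random Mosco convergence, is what makes the argument go through. This is also precisely why the non-time-continuous version of SVI solutions was adopted in Definition \ref{def:SPDE_SVI}, as pointed out in Remark \ref{rem:SVI_notions} (ii).
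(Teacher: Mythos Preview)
Your proposal is correct and follows essentially the same route as the paper's proof: extract a weak limit from the uniform energy bound, integrate the SVI against a non-negative $\gamma\in L^{\infty}([0,T])$, and pass to the limit using the two halves of random Mosco convergence together with strong convergence of the approximate test data. The one minor difference is that you use Fubini to collapse the nested time integrals into a single integral with weight $\tilde\gamma(r)=e^{-Cr}\int_{r}^{T}\gamma_{t}\,dt$, so that Definition \ref{def:random_Mosco} (i) and (ii) apply directly; the paper instead keeps the nested form and invokes Fatou (for the $\vp^{n}(X^{n})$ term) and reverse Fatou with the domination $\E\int_{0}^{t}\le\E\int_{0}^{T}$ (for the $\vp^{n}(Z^{n})$ term). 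Your variant is slightly cleaner but not a genuinely different argument.
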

\begin{proof}
By property Definition \ref{def:SPDE_SVI}, (i): 
\begin{align*}
\esssup_{t\in[0,T]}\E\|X_{t}^{n}\|_{H}^{2}+\E\int_{0}^{T}\vp^{n}(X_{r}^{n})dr & \le C(\E\|x_{0}\|_{H}^{2}+1)<\infty.
\end{align*}
Therefore, for a subsequence
\[
X^{n_{k}}\rightharpoonup X\quad\text{in }L^{2}([0,T]\times\O;H),
\]
for some progressively measurable $X\in L^{2}([0,T]\times\O;H)$. Since $\vp^{n}\to\vp$ in random Mosco sense, we obtain that 
\begin{equation}
\liminf_{n\to\infty}\E\int_{0}^{T}\g_{r}\vp^{n_{k}}(X_{r}^{n_{k}})\, dr\ge\E\int_{0}^{T}\g_{r}\vp(X_{r})\, dr\label{eq:liminf_1}
\end{equation}
for all $\g\in L^{\infty}([0,T])$ non-negative. Hence,
\begin{align*}
\esssup_{t\in[0,T]}\E\|X_{t}\|_{H}^{2}+\E\int_{0}^{T}\vp(X_{r})dr & \le C(\E\|x_{0}\|_{H}^{2}+1).
\end{align*}
It remains to prove that $X$ satisfies \eqref{eq:SVI}. Let $Z\in L^{2}([0,T]\times\O;S)$ with $\vp(Z)\in L^{1}([0,T]\times\O)$ and satisfying \eqref{eq:Z_decomp} for some $Z_{0}\in L^{2}(\O,\mcF_{0};H)$, $G\in L^{2}([0,T]\times\O;H)$, $F\in L^{2}([0,T]\times\O;L_{2}(U,H))$ progressively measurable. By random Mosco convergence there exist sequences $Z_{0}^{n}\in L^{2}(\O,\mcF_{0};H)$, $G^{n}\in L^{2}([0,T]\times\O;H)$, $F^{n}\in L^{2}([0,T]\times\O;L_{2}(U,H))$ progressively measurable such that 
\begin{align*}
Z_{0}^{n} & \to Z\quad\text{in }L^{2}(\O;H)\\
G^{n} & \to G\quad\text{in }L^{2}([0,T]\times\O;H)\\
F^{n} & \to F\quad\text{in }L^{2}([0,T]\times\O;L_{2}(U,H))
\end{align*}
and
\begin{equation}
\limsup_{n\to\infty}\E\int_{0}^{T}\g_{r}\vp^{n}(Z_{r}^{n})dr\le\E\int_{0}^{T}\g_{r}\vp(Z_{r})dr,\label{eq:limsup}
\end{equation}
for all $\g\in L^{\infty}([0,T])$ non-negative. Clearly,
\[
Z_{t}^{n}:=Z_{0}^{n}+\int_{0}^{t}G_{s}^{n}\, ds+\int_{0}^{t}F_{s}^{n}\, dW_{s}\to Z\quad\text{in }L^{2}([0,T]\times\O;H).
\]
Since $X^{n_{k}}$ is an SVI solution we have that 
\begin{align}
 & \E e^{-Ct}\|X_{t}^{n_{k}}-Z_{t}^{n_{k}}\|_{H}^{2}+2\E\int_{0}^{t}e^{-Cr}\vp^{n_{k}}(X_{r}^{n_{k}})\, dr\nonumber \\
 & \le\E\|x_{0}-Z_{0}^{n}\|_{H}^{2}+2\E\int_{0}^{t}e^{-Cr}\vp^{n_{k}}(Z_{r}^{n_{k}})\, dr\label{eq:SVI-1}\\
 & -2\E\int_{0}^{t}e^{-Cr}(G_{r}^{n_{k}},X_{r}^{n_{k}}-Z_{r}^{n_{k}})_{H}\, dr\nonumber \\
 & +2\E\int_{0}^{t}e^{-Cr}\|F_{r}^{n_{k}}-B(Z_{r}^{n_{k}})\|_{L_{2}}^{2}\, dr\quad\text{for a.e. }t\in[0,T].\nonumber 
\end{align}
By \eqref{eq:liminf_1} and Fatou's Lemma for each $\g\in L^{\infty}([0,T])$ non-negative we have that 
\begin{align*}
\liminf_{k\to\infty}\int_{0}^{T}\g_{t}\E\int_{0}^{t}e^{-Cr}\vp^{n_{k}}(X_{r}^{n_{k}})\, drdt & \ge\int_{0}^{T}\g_{t}\liminf_{k\to\infty}\E\int_{0}^{t}e^{-Cr}\vp^{n_{k}}(X_{r}^{n_{k}})\, drdt\\
 & \ge\int_{0}^{T}\g_{t}\E\int_{0}^{t}e^{-Cr}\vp(X_{r})\, drdt.
\end{align*}
Moreover, since
\[
\E\int_{0}^{t}e^{-Cr}\vp^{n_{k}}(Z_{r}^{n_{k}})\, dr\le\E\int_{0}^{T}e^{-Cr}\vp^{n_{k}}(Z_{r}^{n_{k}})\, dr\quad\forall t\in[0,T],
\]
we can apply the reverse Fatou's Lemma and \eqref{eq:limsup} to obtain that 
\begin{align*}
\limsup_{n\to\infty}\int_{0}^{T}\g_{t}\E\int_{0}^{t}e^{-Cr}\vp^{n_{k}}(Z_{r}^{n_{k}})\, drdt & \le\int_{0}^{T}\limsup_{n\to\infty}\g_{t}\E\int_{0}^{t}e^{-Cr}\vp^{n_{k}}(Z_{r}^{n_{k}})\, drdt\\
 & \le\int_{0}^{T}\g_{t}\E\int_{0}^{t}e^{-Cr}\vp(Z_{r})\, drdt.
\end{align*}
Hence, integrating \eqref{eq:SVI-1} against $\g\in L^{\infty}([0,T])$ non-negative and taking $\liminf_{k\to\infty}$ we obtain that 
\begin{align}
 & \int_{0}^{T}\g_{t}\E e^{-Ct}\|X_{t}-Z_{t}\|_{H}^{2}dt+2\int_{0}^{T}\g_{t}\E\int_{0}^{t}e^{-Cr}\vp(X_{r})\, drdt\nonumber \\
 & \le\int_{0}^{T}\g_{t}\E\|x_{0}-Z_{0}\|_{H}^{2}dt+2\int_{0}^{T}\g_{t}\E\int_{0}^{t}e^{-Cr}\vp(Z_{r})\, drdt\label{eq:SVI-1-1}\\
 & -2\int_{0}^{T}\g_{t}\E\int_{0}^{t}e^{-Cr}(G_{r},X_{r}-Z_{r})_{H}\, drdt\nonumber \\
 & +2\int_{0}^{T}\g_{t}\E\int_{0}^{t}e^{-Cr}\|F_{r}-B(Z_{r})\|_{L_{2}}^{2}\, drdt.\nonumber 
\end{align}
Since this is true for all $\g\in L^{\infty}([0,T])$ non-negative, the claim follows.
\end{proof}
The same proof as for Theorem \ref{thm:SVI_stability} also allows to study perturbations of the diffusion coefficients $B$. More precisely,
\begin{rem}
In the situation of Theorem \ref{thm:SVI_stability} let $B^{n}:H\to L_{2}(U,H)$ be uniformly Lipschitz continuous, that is, satisfy \eqref{eq:B-is-lipschitz} with a constant $L$ independent of $n$, and 
\[
B^{n}(u)\to B(u)\quad\text{in }L_{2}(U,H)
\]
for all $u\in H$. Let $X^{n}$ be SVI solutions to \eqref{eq:general_gradient_SPDE} for $\vp$ replaced by $\vp^{n}$ and $B$ replaced by $B^{n}$ satisfying \eqref{eq:SVI_energy}, \eqref{eq:SVI} with a constant $C>0$ independent of $n$. Then there is an SVI solution $X$ to \eqref{eq:general_gradient_SPDE} and a subsequence $X^{n_{k}}$ such that 
\[
X^{n_{k}}\rightharpoonup X\quad\text{in }L^{2}([0,T]\times\O;H).
\]
If SVI solutions to \eqref{eq:general_gradient_SPDE} are unique, then the whole sequence $X^{n}$ converges weakly to $X$.\end{rem}
\begin{proof}
We follow the proof of Theorem \ref{thm:SVI_stability}, observing that
\begin{align*}
 & \|F_{r}^{n_{k}}-B^{n_{k}}(Z_{r}^{n_{k}})\|_{L_{2}}^{2}\\
 & \le2\|F_{r}^{n_{k}}-B(Z_{r})\|_{L_{2}}^{2}+2\|B^{n_{k}}(Z_{r}^{n_{k}})-B(Z_{r})\|_{L_{2}}^{2}\\
 & \le2\|F_{r}^{n_{k}}-B(Z_{r})\|_{L_{2}}^{2}+2\|B^{n_{k}}(Z_{r}^{n_{k}})-B^{n_{k}}(Z_{r})\|_{L_{2}}^{2}+2\|B^{n_{k}}(Z_{r})-B(Z_{r})\|_{L_{2}}^{2}\\
 & \le2\|F_{r}^{n_{k}}-B(Z_{r})\|_{L_{2}}^{2}+2L^{2}\|Z_{r}^{n_{k}}-Z_{r}\|_{H}^{2}+2\|B^{n_{k}}(Z_{r})-B(Z_{r})\|_{L_{2}}^{2}.
\end{align*}
Since $B^{n}$ is Lipschitz continuous and pointwise convergent to $B$ we have that
\[
\|B^{n}(u)\|_{L_{2}}\le C(1+\|u\|_{H})\quad\forall u\in H
\]
with a constant $C>0$ independent of $n$. Hence, by dominated convergence $\|B^{n}(Z_{r})-B(Z_{r})\|_{L_{2}}^{2}\to0$ for $n\to\infty$ and the proof can be finished as before. \end{proof}
\begin{prop}
\label{prop:ex-svi-sln}Let $\vp$ be a l.s.c., convex, proper function on $H$ and let $x_{0}\in L^{2}(\O,\mcF_{0};H)$. Then:
\begin{enumerate}
\item There is an SVI solution $X$ to \eqref{eq:general_gradient_SPDE}.
\item The set of SVI solutions to \eqref{eq:general_gradient_SPDE} satisfying \eqref{eq:SVI_energy}, \eqref{eq:SVI} with a uniform $C>0$ is non-empty, convex and closed in \textup{$L^{2}([0,T]\times\O;H)$. }
\end{enumerate}
\end{prop}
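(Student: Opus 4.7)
The plan is to obtain existence via Moreau--Yosida regularization, and then deduce the structural properties of the solution set from Theorem~\ref{thm:SVI_stability} together with elementary convexity arguments.

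For (i), let $\vp^{\lambda}$ denote the Moreau--Yosida approximation of $\vp$ for $\lambda>0$. Each $\vp^{\lambda}$ is convex, continuously differentiable with monotone and globally $\lambda^{-1}$-Lipschitz gradient, so the approximating SPDE
\[
dX_{t}^{\lambda}=-\nabla\vp^{\lambda}(X_{t}^{\lambda})\,dt+B(X_{t}^{\lambda})\,dW_{t},\quad X_{0}^{\lambda}=x_{0},
\]
admits a unique strong solution $X^{\lambda}\in L^{2}(\O;C([0,T];H))$ by the standard contraction/variational theory for SPDE with monotone Lipschitz drift. By Remark~\ref{rmk:varn_sol}, each $X^{\lambda}$ is a time-continuous SVI solution for the potential $\vp^{\lambda}$ satisfying~\eqref{eq:SVI_energy} and~\eqref{eq:SVI} with a constant $C$ depending only on $L$ and $\|B(0)\|_{L_{2}(U,H)}$, and in particular independent of $\lambda$.

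The standard fact that $\vp^{\lambda}\nearrow\vp$ pointwise on $H$ as $\lambda\downarrow 0$, together with the Mosco convergence of the Moreau--Yosida regularizations recalled in Appendix~\ref{sec:Random-Mosco-convergence}, verifies the hypotheses of Proposition~\ref{prop:random_Mosco}(ii) (the bound~\eqref{eq:bound_for_vp-2} reduces to $\vp^{\lambda}\le\vp$). Hence $\vp^{\lambda}\to\vp$ in random Mosco sense, and Theorem~\ref{thm:SVI_stability} produces a subsequence $X^{\lambda_{k}}\rightharpoonup X$ in $L^{2}([0,T]\times\O;H)$ whose weak limit $X$ is an SVI solution to~\eqref{eq:general_gradient_SPDE}. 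This establishes (i) as well as non-emptiness in (ii).

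For convexity in (ii), take SVI solutions $X,Y$ with a common constant $C$ and $\mu\in[0,1]$ and set $V=\mu X+(1-\mu)Y$. Convexity of $\|\cdot\|_{H}^{2}$ and of $\vp$ yields $\|V_{t}-Z_{t}\|_{H}^{2}\le\mu\|X_{t}-Z_{t}\|_{H}^{2}+(1-\mu)\|Y_{t}-Z_{t}\|_{H}^{2}$ and $\vp(V_{r})\le\mu\vp(X_{r})+(1-\mu)\vp(Y_{r})$. The right-hand side of~\eqref{eq:SVI} is affine in $X$ (only the $(G_{r},X_{r}-Z_{r})_{H}$ term depending on $X$), so the $\mu,(1-\mu)$-convex combination of the SVI inequalities for $X$ and $Y$, after inserting the two convexity bounds on the left, delivers~\eqref{eq:SVI} for $V$; the energy bound~\eqref{eq:SVI_energy} is inherited in the same fashion. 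For closedness, let $X^{n}\to X$ strongly in $L^{2}([0,T]\times\O;H)$ be SVI solutions sharing the constant $C$; applying Theorem~\ref{thm:SVI_stability} with $\vp^{n}\equiv\vp$ (for which random Mosco convergence is trivial) to the (automatically weakly convergent) sequence identifies the limit, which must coincide with the strong limit $X$, as an SVI solution. The main technical point is the uniformity of $C$ along the Moreau--Yosida scheme, which is precisely what Remark~\ref{rmk:varn_sol} delivers, its constant depending only on the noise coefficient $B$ and not on the potential being approximated.
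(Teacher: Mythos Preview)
Your proof is correct and follows essentially the same route as the paper: existence via Moreau--Yosida approximation combined with Remark~\ref{rmk:varn_sol}, Proposition~\ref{prop:random_Mosco}(ii) and Theorem~\ref{thm:SVI_stability}, and then convexity from convexity of $\|\cdot\|_{H}^{2}$ and $\vp$, with closedness again from Theorem~\ref{thm:SVI_stability} applied to the constant sequence $\vp^{n}\equiv\vp$. You spell out the convexity and closedness arguments in more detail than the paper does, but the underlying ideas are identical.
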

\begin{proof}
(i): We consider the Moreau-Yosida approximation $\vp^{n}$ of $\vp$. Then $\partial\vp^{n}$ is single-valued and Lipschitz continuous (cf.~e.g.~\cite{B10}). It is easy to see that 
\begin{align}
dX_{t}^{n} & =-\partial\vp^{n}(X_{t}^{n})\, dt+B(X_{t}^{n})\, dW_{t}\label{eq:yosida-approx-SPDE}\\
X_{0}^{n} & =x_{0}\nonumber 
\end{align}
has a unique, strong solution $X^{n}\in L^{2}(\O;C([0,T];H))$. Thus, $X^{n}$ is also an SVI solution to \eqref{eq:yosida-approx-SPDE}. Moreover, $\vp^{n}\to\vp$ in Mosco- and in pointwise sense and $\vp^{n}\le\vp$ (cf.~e.g.~\cite{B10}). By Proposition \ref{prop:random_Mosco} (ii) this implies that $\vp^{n}\to\vp$ in random Mosco sense. Hence, by Theorem \ref{thm:SVI_stability}  there is an SVI solution for $\vp$.

(ii): Convexity follows from convexity of $\|\cdot\|_{H}^{2}$ and $\vp$. Non-emptiness follows from (i). Closedness follows from Theorem \ref{thm:SVI_stability}.
\end{proof}

\section{SVI approach to stochastic $p$-Laplace equations\label{sec:plp}}

In this section we develop an SVI approach to stochastic singular $p$-Laplace evolution equations with zero Neumann boundary conditions, that is, SPDE of the type
\begin{align}
dX_{t} & \in\div\left(|\nabla X_{t}|^{p-2}\nabla X_{t}\right)\, dt+B(X_{t})\, dW_{t},\nonumber \\
|\nabla X_{t}|^{p-2}\nabla X_{t}\cdot\nu & \ni0\quad\mathrm{on}\;\partial\Ocal,\; t>0,\label{eq:singular_p_laplace-1}\\
X_{0} & =x_{0}\nonumber 
\end{align}
on bounded, convex, smooth domains $\mcO\subseteq\R^{d}$ and with $p\in[1,2)$, where $\nu$ denotes the outer normal on $\partial\Ocal$. In particular, we include the multi-valued case $p=1$ for which we set $|r|^{-1}r=\sgn(r)$, the multi-valued extension of the sign function. In the following we will work with the Hilbert spaces $H=L^{2}(\mcO)$, $S=H^{1}(\mcO)$ and the Banach space $V=(W^{1,p}\cap L^{2})(\mcO)$. We suppose that $B$ satisfies the following assumptions
\begin{enumerate}
\item[(B)] There exists a $C>0$ such that
\begin{equation}
\|B(v)-B(w)\|_{L_{2}(U,H)}^{2}\le C\|v-w\|_{H}^{2}\quad\forall v,w\in H\label{eq:lip-lip}
\end{equation}
and 
\begin{equation}
\|B(v)\|_{L_{2}(U,S)}^{2}\le C(1+\|v\|_{S}^{2})\quad\forall v\in S.\label{eq:noise_h01}
\end{equation}

\end{enumerate}
Let $\psi(\xi)=\frac{1}{p}|\xi|^{p}$ and $\phi(\xi)=\partial\psi(\xi)=|\xi|^{p-2}\xi$. We define, for $p\in(1,2)$,
\[
\vp(v):=\begin{cases}
\int_{\mcO}\psi(\nabla v)\, d\xi & \text{if }v\in(W^{1,p}\cap L^{2})(\mcO)\\
+\infty & \text{if }v\in L^{2}(\mcO)\setminus W^{1,p}(\mcO)
\end{cases}
\]
and for $p=1$,
\[
\vp(v):=\begin{cases}
\|v\|_{TV} & \text{if }v\in(BV\cap L^{2})(\mcO)\\
+\infty & \text{if }v\in L^{2}(\mcO)\setminus BV(\mcO).
\end{cases}
\]
Obviously, $\vp$ is convex and it is easy to see that $\vp$ is lower-semicontinuous on $H$. Since $\vp$ is the lower-semicontinuous hull of $\vp_{|H^{1}}$ on $H$, for $u\in H^{1}$ we have that 
\[
\{-\div\eta:\,\eta\in H^{1},\,\eta\in\phi(\nabla u),\, d\xi\text{-a.e. and }\eta\cdot\nu=0\text{ a.e. on }\partial\mcO\}\subseteq\partial\vp(u).
\]

Hence, we may rewrite \eqref{eq:singular_p_laplace-1} in the relaxed form
\begin{align}
dX_{t} & \in-\partial\vp(X_{t})\, dt+B(X_{t})\, dW_{t},\label{eq:singular_p_laplace-2}\\
X_{0} & =x_{0}\nonumber 
\end{align}
and Definition \ref{def:SPDE_SVI} yields the concept of (continuous) SVI solutions to \eqref{eq:singular_p_laplace-1}. 

We note that, if $p>1$, solutions to \eqref{eq:singular_p_laplace-1} have been constructed in \cite{RRW07} by variational methods. In order to prove convergence of nonlocal approximations we require the weaker notion of SVI solutions. In particular, we will prove uniqueness of SVI solutions to \eqref{eq:singular_p_laplace-1} which is a stronger uniqueness result than previously known. 

The case $p=1$, the stochastic total variation flow, has been recently considered in \cite{BR13}, where well-posedness of SVI solutions to \eqref{eq:singular_p_laplace-1} in the case of linear multiplicative noise has been shown, by means of a different method. We extend this well-posedness result to general multiplicative noise. In addition, our results complement those of \cite{GT11} by characterizing the limit solutions constructed in \cite{GT11} as SVI solutions to \eqref{eq:singular_p_laplace-1}.

The main result of the current section is the proof of well-posedness of  \eqref{eq:singular_p_laplace-2} in the sense of Definition \ref{def:SPDE_SVI}.
\begin{thm}
\label{thm:SVI}Let $x_{0}\in L^{2}(\Omega,\mcF_{0};H).$ Suppose that \eqref{eq:lip-lip} and \eqref{eq:noise_h01} are satisfied. Then there is a unique continuous SVI solution $X\in L^{2}(\Omega;C([0,T];H))$ to \eqref{eq:singular_p_laplace-2} in the sense of Definition \ref{def:SPDE_SVI}. For two SVI solutions $X$, $Y$ with initial conditions $x_{0},y_{0}\in L^{2}(\Omega;H)$ we have
\[
\esssup_{t\in[0,T]}\E\|X_{t}-Y_{t}\|_{H}^{2}\lesssim\E\|x_{0}-y_{0}\|_{H}^{2}.
\]
\end{thm}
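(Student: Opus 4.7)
The plan is twofold: (i) construct a time-continuous SVI solution via the Moreau--Yosida approximation, and (ii) deduce uniqueness together with the $L^{2}$-contraction by testing the SVI for an arbitrary solution against the Yosida-approximated strong solution of a second one. The Moreau--Yosida apparatus serves both purposes because the regularized equation has Lipschitz drift and admits classical strong solutions that can be controlled uniformly in the regularization parameter by means of the $S$-growth hypothesis \eqref{eq:noise_h01} on the noise.

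\textbf{Existence.} For each $\lambda>0$, let $\vp^{\lambda}$ denote the Moreau--Yosida approximation of $\vp$ and $J_{\lambda}=(I+\lambda\partial\vp)^{-1}$ its resolvent. Since $\partial\vp^{\lambda}$ is single-valued and Lipschitz on $H$, the regularized equation
\[
dX^{\lambda}_{t}=-\partial\vp^{\lambda}(X^{\lambda}_{t})\,dt+B(X^{\lambda}_{t})\,dW_{t},\quad X^{\lambda}_{0}=x_{0},
\]
has a unique strong solution $X^{\lambda}\in L^{2}(\Omega;C([0,T];H))$. I would show $\{X^{\lambda}\}_{\lambda>0}$ is Cauchy in this space by applying It\^{o}'s formula to $\|X^{\lambda}_{t}-X^{\mu}_{t}\|_{H}^{2}$ and exploiting the identity $\partial\vp^{\lambda}(u)\in\partial\vp(J_{\lambda}u)$: monotonicity of $\partial\vp$ produces a non-positive principal term, leaving a remainder of order $\lambda+\mu$ controlled by a uniform $L^{2}$-bound on $\partial\vp^{\lambda}(X^{\lambda})$. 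That uniform bound is obtained from an It\^{o} expansion of $\vp^{\lambda}(X^{\lambda}_{t})$ (or of $\|X^{\lambda}_{t}\|_{S}^{2}$) in which the It\^{o} trace correction is dominated via \eqref{eq:noise_h01}. Together with Lipschitz continuity of $B$, Burkholder--Davis--Gundy and Gronwall, this yields $\E\sup_{t\in[0,T]}\|X^{\lambda}_{t}-X^{\mu}_{t}\|_{H}^{2}\lesssim \lambda+\mu$. The Cauchy limit $X\in L^{2}(\Omega;C([0,T];H))$ satisfies \eqref{eq:SVI_energy}, and since $\vp^{\lambda}\to\vp$ in Mosco sense with $\vp^{\lambda}\le\vp$, Proposition \ref{prop:random_Mosco}(ii) gives random Mosco convergence, so Theorem \ref{thm:SVI_stability} ensures $X$ satisfies the variational inequality \eqref{eq:SVI}.

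\textbf{Uniqueness and contraction.} Let $X,Y$ be two SVI solutions with initial data $x_{0},y_{0}$, and let $Y^{\mu}$ be the strong solution of the Yosida-regularized equation starting at $y_{0}$. Then $Y^{\mu}$ admits the decomposition
\[
Y^{\mu}_{t}=y_{0}-\int_{0}^{t}\partial\vp^{\mu}(Y^{\mu}_{r})\,dr+\int_{0}^{t}B(Y^{\mu}_{r})\,dW_{r},
\]
with $\partial\vp^{\mu}(Y^{\mu})\in L^{2}([0,T]\times\Omega;H)$ and $B(Y^{\mu})\in L^{2}([0,T]\times\Omega;L_{2}(U,H))$, so that, provided $Y^{\mu}\in L^{2}([0,T]\times\Omega;S)$, it is an admissible test function in the SVI for $X$. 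Inserting it, the stochastic correction $\|B(Y^{\mu}_{r})-B(Y^{\mu}_{r})\|_{L_{2}}^{2}$ vanishes; the cross term is handled using $\partial\vp^{\mu}(Y^{\mu}_{r})\in\partial\vp(J_{\mu}Y^{\mu}_{r})$, the subdifferential inequality
\[
(\partial\vp^{\mu}(Y^{\mu}_{r}),X_{r}-J_{\mu}Y^{\mu}_{r})_{H}\le \vp(X_{r})-\vp(J_{\mu}Y^{\mu}_{r}),
\]
and the identity $(\partial\vp^{\mu}(Y^{\mu}_{r}),J_{\mu}Y^{\mu}_{r}-Y^{\mu}_{r})_{H}=-\mu\|\partial\vp^{\mu}(Y^{\mu}_{r})\|_{H}^{2}\le 0$. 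After cancellation of $\vp(X_{r})$ in \eqref{eq:SVI}, one obtains
\begin{align*}
\E e^{-Ct}\|X_{t}-Y^{\mu}_{t}\|_{H}^{2} &\le \E\|x_{0}-y_{0}\|_{H}^{2}\\
&\quad +2\E\int_{0}^{t}e^{-Cr}\bigl[\vp(Y^{\mu}_{r})-\vp(J_{\mu}Y^{\mu}_{r})\bigr]\,dr.
\end{align*}
As $\mu\to 0$, the Cauchy estimate from the existence step gives $Y^{\mu}\to Y$ in $L^{2}(\Omega;C([0,T];H))$, while the Moreau--Yosida identity $\vp(J_{\mu}u)=\vp^{\mu}(u)-\tfrac{\mu}{2}\|\partial\vp^{\mu}(u)\|_{H}^{2}$ together with the uniform $L^{2}$-bound on $\partial\vp^{\mu}(Y^{\mu})$ forces the residual integrand to vanish. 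The contraction estimate follows, and uniqueness is its specialization to $x_{0}=y_{0}$.

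\textbf{Main obstacle.} The crux is verifying that $Y^{\mu}$ (resp.\ $X^{\lambda}$) takes values in $L^{2}([0,T]\times\Omega;S)$ uniformly in the regularization parameter, so as to qualify as an SVI test function. This propagation of $S=H^{1}$-regularity through the stochastic Yosida scheme is exactly where hypothesis \eqref{eq:noise_h01} enters: without an $S$-growth condition on $B$, the It\^{o} expansion controlling the $H^{1}$-energy of the approximants cannot be closed, and the entire test-function strategy breaks down. Once this regularity is secured, the remainder of the argument is careful bookkeeping of Yosida identities and passage to the limit via the monotone convergence $J_{\mu}u\to u$ in $H$ together with lower semi-continuity of $\vp$.
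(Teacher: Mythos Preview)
Your overall architecture---approximate, prove a Cauchy estimate, identify the limit as a continuous SVI solution, and then test an arbitrary SVI solution against the approximated strong solution for uniqueness---is exactly the paper's. The difference is in \emph{which} approximation you use, and that difference is not cosmetic: it is where your sketch develops real gaps.

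The paper does not take the Moreau--Yosida regularization $\varphi^{\lambda}$ of $\varphi$ on $H=L^{2}$. Instead it regularizes the integrand $\psi(\xi)=\tfrac{1}{p}|\xi|^{p}$ on $\R^{d}$ (giving $\psi^{\delta}$, $\phi^{\delta}=(\psi^{\delta})'$), adds a viscosity term $\varepsilon\Delta$, and approximates the initial datum in $H^{1}$. This three-layer approximation is chosen precisely to bypass the two obstacles your scheme runs into.

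\emph{First obstacle: the $H^{1}$-bound.} You need $Y^{\mu}\in L^{2}([0,T]\times\Omega;S)$ to use it as a test function, and you need a uniform $L^{2}$-bound on $\partial\varphi^{\lambda}(X^{\lambda})$ for the Cauchy estimate. Your two proposed routes both fail as stated. An It\^{o} expansion of $\varphi^{\lambda}(X^{\lambda})$ produces a trace term bounded only by $\tfrac{1}{\lambda}\|B(X^{\lambda})\|_{L_{2}(U,H)}^{2}$, since $\|D^{2}\varphi^{\lambda}\|\le\tfrac{1}{\lambda}$; hypothesis \eqref{eq:noise_h01} does not enter here, and the bound blows up. An It\^{o} expansion of $\|X^{\lambda}\|_{H^{1}}^{2}$ presupposes that $X^{\lambda}$ is an $H^{1}$-valued semimartingale, which is not a priori available for the abstract $L^{2}$-Yosida flow. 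The paper obtains this by the \emph{structure} of its approximation: the operator $\varepsilon\Delta+\operatorname{div}\phi^{\delta}(\nabla\cdot)$ fits the variational triple $H^{1}\subset L^{2}\subset(H^{1})^{*}$, and on the Galerkin level one has the key sign $(v,\operatorname{div}\phi^{\delta}(\nabla v))_{\dot H^{1}}\le 0$ (this is where convexity of the domain and the resolvent monotonicity of $-\Delta$ for $\int\psi(\nabla\cdot)$ are used). Nothing analogous is visible for the abstract resolvent $J_{\lambda}=(I+\lambda\partial\varphi)^{-1}$ without further work.

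\emph{Second obstacle: the residual in uniqueness.} Your residual is $\varphi(Y^{\mu}_{r})-\varphi(J_{\mu}Y^{\mu}_{r})=[\varphi(Y^{\mu}_{r})-\varphi^{\mu}(Y^{\mu}_{r})]+\tfrac{\mu}{2}\|\partial\varphi^{\mu}(Y^{\mu}_{r})\|_{H}^{2}$. The second summand is fine given a uniform bound on $\partial\varphi^{\mu}(Y^{\mu})$. The first, however, admits the general upper bound $\varphi(u)-\varphi^{\mu}(u)\le\mu|\partial\varphi(u)|^{2}$ \emph{only for} $u\in\operatorname{dom}(\partial\varphi)$; for $u$ merely in $\operatorname{dom}(\varphi)$ (or in $H^{1}$) no such quantitative estimate holds for the abstract Yosida approximation, and $Y^{\mu}$ is not known to lie in $\operatorname{dom}(\partial\varphi)$. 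The paper sidesteps this entirely: regularizing at the level of $\psi$ gives the explicit pointwise bound $|\psi(\xi)-\psi^{\delta}(\xi)|\le C\delta(1+\psi(\xi))$, whence $|\varphi(v)-\varphi^{\delta}(v)|\le C\delta(1+\varphi(v))$ for every $v\in H^{1}$, and this is all one needs once the approximants sit in $H^{1}$.

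In short, the abstract Moreau--Yosida on $H$ is too blunt here; the paper's choice of regularizing the integrand plus adding viscosity is what makes both the $H^{1}$-propagation and the uniqueness residual tractable.
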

\begin{proof}
\emph{} The proof is based on a three step approximation of \eqref{eq:singular_p_laplace-1}. Let $\psi,\psi^{\d},\phi^{\d},R{}_{\d}$ be as in Appendix \ref{sec:Moreau-Yosida}, $x_{0}^{n}\to x_{0}$ in $L^{2}(\O;H)$ with $x_{0}^{n}\in L^{2}(\O,\mcF_{0};H^{1})$ and $\ve>0$. We then consider the non-degenerate, non-singular approximating SPDE
\begin{align}
dX_{t}^{\ve,\d,n} & =\ve\D X_{t}^{\ve,\d,n}\, dt+\div\phi^{\d}\left(\nabla X_{t}^{\ve,\d,n}\right)\, dt+B(X_{t}^{\ve,\d,n})\, dW_{t},\label{eq:full_approx_SVI_constr}\\
X_{0}^{\ve,\d,n} & =x_{0}^{n},\nonumber 
\end{align}
with zero Neumann boundary conditions. We will first establish the existence of strong solutions to \eqref{eq:full_approx_SVI_constr} and then prove their convergence in the singular, degenerate limit $\d\to0$, $\ve\to0$, $n\to\infty$. 

\emph{Step 1: Non-singular, non-degenerate approximation.}

In this step we consider \eqref{eq:full_approx_SVI_constr} for $\d,\ve>0$, $n\in\N$ fix. We thus suppress them in the notation of $X^{\ve,\d,n}$ and $\phi^{\d}$. By \cite{PR07} there is a unique variational solution $X$ to \eqref{eq:full_approx_SVI_constr} with respect to the Gelfand triple $H^{1}\hookrightarrow L^{2}\hookrightarrow(H^{1})^{*}$ satisfying
\[
\E\sup_{t\in[0,T]}\|X_{t}\|_{H}^{2}\le C(\E\|x_{0}\|_{H}^{2}+1).
\]

\textit{Claim}: We have
\begin{equation}
\E\sup_{t\in[0,T]}\|X_{t}\|_{H^{1}}^{2}+2\ve\E\int_{0}^{T}\|\D X_{r}\|_{H}^{2}dr\le C(\E\|x_{0}\|_{H^{1}}^{2}+1),\label{eq:strong_soln_visc}
\end{equation}
with a constant $C>0$ independent of $\ve$, $\d$ and $n$.

Indeed: In the following we let $(e_{i})_{i=1}^{\infty}$ be an orthonormal basis of eigenvectors of the Neumann Laplacian $-\D$ on $L^{2}(\Ocal)$. We further let $P_{n}:H\to\text{span}\{e_{1},\dots,e_{n}\}$ be the orthogonal projection onto the span of the first $n$ eigenvectors. We recall that the unique variational solution $X^{\ve}$ to \eqref{eq:full_approx_SVI_constr} is constructed in \cite{PR07} as the (weak) limit $X$ of the following Galerkin approximation
\begin{align*}
dX_{t}^{n} & =\ve P_{n}\D X_{t}^{n}\, dt+P_{n}\div\phi(\nabla X_{t}^{n})\, dt+P_{n}B(X_{t}^{n})\, dW_{t}^{n},\\
X_{0}^{n} & =P_{n}x_{0}.
\end{align*}
By \cite[Theorem 4.2.4 and its proof]{PR07}, $X^n\rightharpoonup X$ weakly in $L^{2}([0,T]\times\O;H)$, $X$ is unique and $X\in L^2(\Omega;C([0,T];H))$.
We set $\|v\|_{\dot{H}^{1}}^{2}:=\|\nabla v\|_{2}^{2}$ for $v\in H^{1}$. Itô's formula then yields
\begin{align*}
\|X_{t}^{n}\|_{\dot{H}^{1}}^{2} & =\|P_{n}x_{0}\|_{\dot{H}^{1}}^{2}+2\int_{0}^{t}(X_{r}^{n},\ve P_{n}\D X_{r}^{n}+P_{n}\div\phi(\nabla X_{r}^{n}))_{\dot{H}^{1}}\, dr\\
 & +2\int_{0}^{t}(X_{r}^{n},P_{n}B(X_{r}^{n})\, dW_{r}^{n})_{\dot{H}^{1}}\, dr+\int_{0}^{t}\|P_{n}B(X_{r}^{n})\|_{L_{2}(U,\dot{H}^{1})}^{2}\, dr\\
 & =\|P_{n}x_{0}\|_{\dot{H}^{1}}^{2}-2\ve\int_{0}^{t}\|\D X_{r}^{n}\|_{H}^{2}\, dr+2\int_{0}^{t}(X_{r}^{n},P_{n}\div\phi(\nabla X_{r}^{n}))_{\dot{H}^{1}}\, dr\\
 & +2\int_{0}^{t}(X_{r}^{n},P_{n}B(X_{r}^{n})\, dW_{r}^{n})_{\dot{H}^{1}}\, dr+\int_{0}^{t}\|P_{n}B(X_{r}^{n})\|_{L_{2}(U,\dot{H}^{1})}^{2}\, dr.
\end{align*}
For $v\in H^{2}$ with $\phi(\nabla v)\cdot\nu=0$ on $\partial\mcO$, arguing as in \cite[Example 7.11]{GT11}, we obtain that 
\begin{align}
(v,\div\phi(\nabla v))_{\dot{H}^{1}} & =(-\D v,\div\phi(\nabla v))_{H}\nonumber \\
 & =\lim_{n\to\infty}(T_{n}v,\div\phi(\nabla v))_{H}\nonumber \\
 & =\lim_{n\to\infty}(nu-nJ_{n}u,\div\phi(\nabla v))_{H}\label{eq:liu-ref}\\
 & \le\lim_{n\to\infty}n\left(\int_{\mcO}\psi(\nabla J_{n}u)d\xi-\int_{\mcO}\psi(\nabla u)d\xi\right)\nonumber \\
 & \le0\nonumber 
\end{align}
where $T_{n}$ is the Yosida-approximation and $J_{n}$ the resolvent of the Neumann Laplacian $-\D$ on $L^{2}$. Using this, \eqref{eq:noise_h01} and the Burkholder-Davis-Gundy inequality yields
\begin{align}
\frac{1}{2}\E\sup_{t\in[0,T]}e^{-Ct}\|X_{t}^{n}\|_{H^{1}}^{2} & \le\E\|x_{0}\|_{H^{1}}^{2}-2\ve\E\int_{0}^{T}e^{-Cr}\|\D X_{r}^{n}\|_{H}^{2}\, dr+C,\label{eq:galerkin_ineq}
\end{align}
for some $C>0$ large enough. Hence, $X^{n}$ is uniformly bounded in $L^{2}([0,T]\times\O;H^{2})$ and $L^{2}(\O;L^{\infty}([0,T];H^{1}))$ and we may extract a weakly (weak$^{\ast}$ resp.) convergent subsequence (for simplicity we stick with the notation $X^{n}$). Therefore, we have
\begin{align*}
X^{n} & \rightharpoonup X,\quad\text{in }L^{2}([0,T]\times\O;H^{2}),\\
X^{n} & \rightharpoonup^{*}X,\quad\text{in }L^{2}(\O;L^{\infty}([0,T];H^{1})),
\end{align*}
for $n\to\infty$. Here, $X\in L^2(\Omega;C([0,T];H))$ is as above. By weak lower semicontinuity of the norms we may pass to the limit in \eqref{eq:galerkin_ineq} which yields the claim.

\emph{Step 2: Singular limit ($\d\to0$).} In this step we consider the singular limit $\d\to0$. Since we keep $\ve,n$ fix they are suppressed in the notation. Let $X^{\d}$ be the strong solution to \eqref{eq:full_approx_SVI_constr} constructed in step one. For two solutions $X^{\d_{1}},X^{\d_{2}}$ to \eqref{eq:full_approx_SVI_constr} with initial condition $x_{0}\in L^{2}(\O;H^{1})$ we have
\begin{align*}
e^{-Kt}\|X_{t}^{\d_{1}}-X_{t}^{\d_{2}}\|_{H}^{2}= & 2\int_{0}^{t}e^{-Kr}(\ve\D X_{r}^{\d_{1}}-\ve\D X_{r}^{\d_{2}},X_{r}^{\d_{1}}-X_{r}^{\d_{2}})_{H}\, dr\\
 & +2\int_{0}^{t}e^{-Kr}(\div\phi^{\d_{1}}(\nabla X_{r}^{\d_{1}})-\div\phi^{\d_{2}}(\nabla X_{r}^{\d_{2}}),X_{r}^{\d_{1}}-X_{r}^{\d_{2}})_{H}\, dr\\
 & +2\int_{0}^{t}e^{-Kr}(X_{r}^{\d_{1}}-X_{r}^{\d_{2}},B(X_{r}^{\d_{1}})-B(X_{r}^{\d_{2}}))_{H}\, dW_{r}\\
 & +\int_{0}^{t}e^{-Kr}\|B(X_{r}^{\d_{1}})-B(X_{r}^{\d_{2}})\|_{L_{2}}^{2}\, dr\\
 & -K\int_{0}^{t}e^{-Kr}\|X_{r}^{\d_{1}}-X_{r}^{\d_{2}}\|_{H}^{2}\, dr.
\end{align*}
Due to \eqref{eq:monotone_Y_bound} we observe that
\begin{align*}
 & (\div\phi^{\d_{1}}(\nabla X_{r}^{\d_{1}})-\div\phi^{\d_{2}}(\nabla X_{r}^{\d_{2}}),X_{r}^{\d_{1}}-X_{r}^{\d_{2}})_{H}\\
 & =-\int_{\mcO}(\phi^{\d_{1}}(\nabla X_{r}^{\d_{1}})-\phi^{\d_{2}}(\nabla X_{r}^{\d_{2}}))\cdot(\nabla X_{r}^{\d_{1}}-\nabla X_{r}^{\d_{2}})\, d\xi\\
 & \le C(\d_{1}+\d_{2})\int_{\mcO}(1+|\nabla X_{r}^{\d_{1}}|^{2}+|\nabla X_{r}^{\d_{2}}|^{2})\, d\xi\\
 & \le C(\d_{1}+\d_{2})(1+\|X_{r}^{\d_{1}}\|_{H^{1}}^{2}+\|X_{r}^{\d_{2}}\|_{H^{1}}^{2}).
\end{align*}
$dr\otimes\P$-a.e.. Moreover,
\begin{align*}
(\ve\D X_{r}^{\d_{1}}-\ve\D X_{r}^{\d_{2}},X_{r}^{\d_{1}}-X_{r}^{\d_{2}})_{H} & \le0
\end{align*}
$dr\otimes\P$-a.e.. Thus,
\begin{align*}
e^{-Kt}\|X_{t}^{\d_{1}}-X_{t}^{\d_{2}}\|_{H}^{2}\le & C(\d_{1}+\d_{2})\int_{0}^{t}(1+\|X_{r}^{\d_{1}}\|_{H^{1}}^{2}+\|X_{r}^{\d_{2}}\|_{H^{1}}^{2})\, dr\\
 & +2\int_{0}^{t}e^{-Kr}(X_{r}^{\d_{1}}-X_{r}^{\d_{2}},B(X_{r}^{\d_{1}})-B(X_{r}^{\d_{2}}))_{H}\, dW_{r}\\
 & +C\int_{0}^{t}e^{-Kr}\|X_{r}^{\d_{1}}-X_{r}^{\d_{2}}\|_{H}^{2}\, dr\\
 & -K\int_{0}^{t}e^{-Kr}\|X_{r}^{\d_{1}}-X_{r}^{\d_{2}}\|_{H}^{2}\, dr.
\end{align*}
Using the Burkholder-Davis-Gundy inequality and \eqref{eq:strong_soln_visc} we obtain
\begin{align}
\E\sup_{t\in[0,T]}e^{-Kt}\|X_{t}^{\d_{1}}-X_{t}^{\d_{2}}\|_{H}^{2}\le & C(\d_{1}+\d_{2})(\E\|x_{0}\|_{H^{1}}^{2}+1),\label{eq:stability_ic_SFDE-2}
\end{align}
for $K>0$ large enough. Hence, we obtain the existence of an $\{\mcF_{t}\}$-adapted process $X\in L^{2}(\O;C([0,T];H))$ with $X_{0}=x_{0}$ such that
\[
\E\sup_{t\in[0,T]}\|X_{t}^{\d}-X_{t}\|_{H}^{2}\to0\quad\text{for }\d\to0.
\]

\emph{Step 3: Vanishing viscosity ($\ve\to0$).} For two solutions $X^{\ve_{1},\d},X^{\ve_{2},\d}$ to \eqref{eq:full_approx_SVI_constr} with initial conditions $x_{0}^{1},x_{0}^{2}\in L^{2}(\O;H_{}^{1})$ we have
\begin{align*}
 & e^{-Kt}\|X_{t}^{\ve_{1},\d}-X_{t}^{\ve_{2},\d}\|_{H}^{2}\\
 & =\|x_{0}^{1}-x_{0}^{2}\|_{H}^{2}+2\int_{0}^{t}e^{-Kr}(\ve_{1}\D X_{r}^{\ve_{1},\d}-\ve_{2}\D X_{r}^{\ve_{2},\d},X_{r}^{\ve_{1},\d}-X_{r}^{\ve_{2},\d})_{H}\, dr\\
 & +2\int_{0}^{t}e^{-Kr}(\div\phi^{\d}(\nabla X_{r}^{\ve_{1},\d})-\div\phi^{\d}(\nabla X_{r}^{\ve_{2},\d}),X_{r}^{\ve_{1},\d}-X_{r}^{\ve_{2},\d})_{H}\, dr\\
 & +2\int_{0}^{t}e^{-Kr}(X_{r}^{\ve_{1},\d}-X_{r}^{\ve_{2},\d},B(X_{r}^{\ve_{1},\d})-B(X_{r}^{\ve_{2},\d}))_{H}\, dW_{r}\\
 & +\int_{0}^{t}e^{-Kr}\|B(X_{r}^{\ve_{1},\d})-B(X_{r}^{\ve_{2},\d})\|_{L_{2}}^{2}\, dr\\
 & -K\int_{0}^{t}e^{-Kr}\|X_{r}^{\ve_{1},\d}-X_{r}^{\ve_{2},\d}\|_{H}^{2}\, dr.
\end{align*}
We note
\begin{align*}
(\phi^{\d}(a)-\phi^{\d}(b))\cdot(a-b)\ge & 0\quad\forall a,b\in\R^{d}
\end{align*}
and
\begin{align*}
 & (\ve_{1}\D X_{r}^{\ve_{1},\d}-\ve_{2}\D X_{r}^{\ve_{2},\d},X_{r}^{\ve_{1},\d}-X_{r}^{\ve_{2},\d})_{H}\\
 & =\int_{\mcO}(\ve_{1}\nabla X_{r}^{\ve_{1},\d}-\ve_{2}\nabla X_{r}^{\ve_{2},\d})\cdot(\nabla X_{r}^{\ve_{1},\d}-\nabla X_{r}^{\ve_{2},\d})\, d\xi\\
 & \le C(\ve_{1}+\ve_{2})(\|X_{r}^{\ve_{1},\d}\|_{H^{1}}^{2}+\|X_{r}^{\ve_{2},\d}\|_{H^{1}}^{2}),
\end{align*}
$dt\otimes\P$-a.e.. Thus,
\begin{align*}
e^{-Kt}\|X_{t}^{\ve_{1},\d}-X_{t}^{\ve_{2},\d}\|_{H}^{2}\le & \|x_{0}^{1}-x_{0}^{2}\|_{H}^{2}\\
 & +C(\ve_{1}+\ve_{2})\int_{0}^{t}(1+\|X_{r}^{\ve_{1},\d}\|_{H^{1}}^{2}+\|X_{r}^{\ve_{2},\d}\|_{H^{1}}^{2})\, dr\\
 & +2\int_{0}^{t}e^{-Kr}(X_{r}^{\ve_{1},\d}-X_{r}^{\ve_{2},\d},B(X_{r}^{\ve_{1},\d})-B(X_{r}^{\ve_{2},\d}))_{H}\, dW_{r}\\
 & +C\int_{0}^{t}e^{-Kr}\|X_{r}^{\ve_{1},\d}-X_{r}^{\ve_{2},\d}\|_{H}^{2}\, dr\\
 & -K\int_{0}^{t}e^{-Kr}\|X_{r}^{\ve_{1},\d}-X_{r}^{\ve_{2},\d}\|_{H}^{2}\, dr.
\end{align*}
Using the Burkholder-Davis-Gundy inequality and \eqref{eq:strong_soln_visc} we obtain
\begin{align*}
\E\sup_{t\in[0,T]}e^{-Kt}\|X_{t}^{\ve_{1},\d}-X_{t}^{\ve_{2},\d}\|_{H}^{2}\le & 2\E\|x_{0}^{1}-x_{0}^{2}\|_{H}^{2}\\
 & +C(\ve_{1}+\ve_{2})(\E\|x_{0}^{1}\|_{H^{1}}^{2}+\E\|x_{0}^{2}\|_{H^{1}}^{2}+1),
\end{align*}
for $K>0$ large enough. Taking the limit $\d\to0$ yields (by step one)
\begin{align}
\E\sup_{t\in[0,T]}e^{-Kt}\|X_{t}^{\ve_{1}}-X_{t}^{\ve_{2}}\|_{H}^{2}\le & 2E\|x_{0}^{1}-x_{0}^{2}\|_{H}^{2}\label{eq:stability_ic_SFDE}\\
 & +C(\ve_{1}+\ve_{2})(\E\|x_{0}^{1}\|_{H^{1}}^{2}+\E\|x_{0}^{2}\|_{H^{1}}^{2}+1).\nonumber 
\end{align}
Hence, there is an $\{\mcF_{t}\}$-adapted process $X\in L^{2}(\O;C([0,T];H))$ with $X_{0}=x_{0}$ such that
\[
\E\sup_{t\in[0,T]}\|X_{t}^{\ve}-X_{t}\|_{H}^{2}\to0\quad\text{for }\ve\to0.
\]
\emph{Step 4: Approximating the initial condition ($n\to\infty$).} Let $X^{\ve,\d,n}$ be the unique strong solution \eqref{eq:full_approx_SVI_constr} and $X^{\d,n},X^{n}$ be the limits constructed in the last two steps. Taking $\ve\to0$ in \eqref{eq:stability_ic_SFDE} yields
\begin{align*}
\E\sup_{t\in[0,T]}e^{-Kt}\|X_{t}^{n}-X_{t}^{m}\|_{H}^{2}\le & 2\E\|x_{0}^{n}-x_{0}^{m}\|_{H}^{2}.
\end{align*}
Thus, there is an $\{\mcF_{t}\}$-adapted process $X\in L^{2}(\O;C([0,T];H))$ with $X_{0}=x_{0}$ such that 
\[
\E\sup_{t\in[0,T]}\|X_{t}^{n}-X_{t}\|_{H}^{2}\to0\quad\text{for }n\to\infty.
\]

\emph{Step 5: Energy inequality. }Itô's formula implies
\begin{align*}
\E e^{-tC}\|X_{t}^{\ve,\d,n}\|_{H}^{2}\le & \E\|x_{0}^{n}\|_{H}^{2}+2\E\int_{0}^{t}e^{-rC}(\ve\D X_{r}^{\ve,\d,n}+\div\phi^{\d}(\nabla X_{r}^{\ve,\d,n}),X_{r}^{\ve,\d,n})_{H}\, dr\\
 & +\E\int_{0}^{t}e^{-rC}\|B(X_{r}^{\ve,\d,n})\|_{L_{2}}^{2}\, dr-C\E\int_{0}^{t}e^{-rC}\norm{X_{r}^{\ve,\d,n}}_{H}^{2}\, dr.
\end{align*}
Since
\begin{align*}
 & (\ve\D X_{r}^{\ve,\d,n}+\div\phi^{\d}(\nabla X_{r}^{\ve,\d,n}),X_{r}^{\ve,\d,n})_{H}\\
 & =(\ve\D X_{r}^{\ve,\d,n},X_{r}^{\ve,\d,n})_{H}-(\phi^{\d}(\nabla X_{r}^{\ve,\d,n}),\nabla X_{r}^{\ve,\d,n})_{H}\\
 & \le\int_{\mcO}\psi^{\d}(\nabla X_{r}^{\ve,\d,n})d\xi\\
 & \le\vp(X_{r}^{\ve,\d,n})+C\d(\|X_{r}^{\ve,\d,n}\|_{H^{1}}^{2}+1)
\end{align*}
and 
\[
\|B(X_{r}^{\ve,\d,n})\|_{L_{2}}^{2}\lesssim1+\|X_{r}^{\ve,\d,n}\|_{H}^{2},
\]
choosing $C$ large enough yields
\begin{align*}
 & \E e^{-tC}\|X_{t}^{\ve,\d,n}\|_{H}^{2}+2\E\int_{0}^{t}e^{-rC}\vp(X_{r}^{\ve,\d,n})\, dr\le C(\E\|x_{0}^{n}\|_{H}^{2}+1)+C\d(\|X_{r}^{\ve,\d,n}\|_{H^{1}}^{2}+1).
\end{align*}
Using lower-semicontinuity of $\vp$ and \eqref{eq:strong_soln_visc} we may take the limit $\d\to0$ and, subsequently, the limits $\ve\to0$, $n\to\infty$ to obtain \eqref{eq:SVI_energy}.

\emph{Step 6: Variational inequality.}

Let now $F$, $G$, $Z$ be as in Definition \ref{def:SPDE_SVI} (with $H=L^{2}(\mcO)$ and $S=H^{1}(\mcO)$) and let $X^{\ve,\d,n}$ be the solution to \eqref{eq:full_approx_SVI_constr} with initial conditions $x_{0}^{n}\in L^{2}(\O,\mcF_{0};H^{1})$ satisfying $x_{0}^{n}\to x_{0}$ in $L^{2}(\O;H)$. Itô's formula implies
\begin{align*}
 & \E e^{-tK}\|X_{t}^{\ve,\d,n}-Z_{t}\|_{H}^{2}\\
 & =\E\|x_{0}^{n}-Z_{0}\|_{H}^{2}+2\E\int_{0}^{t}e^{-rK}(\ve\D X_{r}^{\ve,\d,n}+\div\phi^{\d}(\nabla X_{r}^{\ve,\d,n})-G_{r},X_{r}^{\ve,\d,n}-Z_{r})_{H}\, dr\\
 & +\E\int_{0}^{t}e^{-rK}\|B(X_{r}^{\ve,\d,n})-F_{r}\|_{L_{2}}^{2}\, dr\\
 & -K\E\int_{0}^{t}e^{-rK}\norm{X_{r}^{\ve,\d,n}-Z_{r}}_{H}^{2}\, dr.
\end{align*}
Due to \eqref{eq:yosida_convergence-2} we have
\begin{align*}
|\vp(v)-\vp^{\delta}(v)| & \le C\d(1+\vp(v))\quad\forall v\in H^{1}(\mcO)
\end{align*}
and thus (using convexity of $\psi^{\d}$ and \eqref{eq:MY-ineq})
\begin{align*}
(\div\phi^{\d}(\nabla X_{r}^{\ve,\d,n}),X_{r}^{\ve,\d,n}-Z_{r})_{H}\le & \vp^{\d}(Z_{r})-\vp^{\d}(X_{r}^{\ve,\d,n})\\
\le & \vp(Z_{r})-\vp(X_{r}^{\ve,\d,n})+C\d(1+\vp(X_{r}^{\ve,\d,n})),
\end{align*}
$dr\otimes\P$-a.e.. Moreover,
\begin{align*}
(\ve\D X_{r}^{\ve,\d,n},X_{r}^{\ve,\d,n}-Z_{r})_{H} & \le\ve\|\D X_{r}^{\ve,\d,n}\|_{H}\|X_{r}^{\ve,\d,n}-Z_{r}\|_{H}\\
 & \le\ve^{\frac{4}{3}}\|\D X_{r}^{\ve,\d,n}\|_{H}^{2}+\ve^{\frac{2}{3}}\|X_{r}^{\ve,\d,n}-Z_{r}\|_{H}^{2}
\end{align*}
$dr\otimes\P$-a.e.. Since
\begin{align*}
\|B(X_{r}^{\ve,\d,n})-F_{r}\|_{L_{2}}^{2} & \le\|B(X_{r}^{\ve,\d,n})-B(Z_{r})\|_{L_{2}}^{2}+\|B(Z_{r})-F_{r}\|_{L_{2}}^{2}\\
 & \le2L^{2}\|X_{r}^{\ve,\d,n}-Z_{r}\|_{H}^{2}+2\|B(Z_{r})-F_{r}\|_{L_{2}}^{2},
\end{align*}
we conclude that
\begin{align*}
 & \E e^{-tK}\|X_{t}^{\ve,\d,n}-Z_{t}\|_{H}^{2}+2\E\int_{0}^{t}e^{-rK}\vp(X_{r}^{\ve,\d,n})\, dr\\
\le & \E\|x_{0}^{n}-Z_{0}\|_{H}^{2}+2\E\int_{0}^{t}e^{-rK}\vp(Z_{r})\, dr+C\d\E\int_{0}^{t}e^{-rK}(1+\vp(X_{r}^{\ve,\d,n}))\, dr\\
 & -2\E\int_{0}^{t}e^{-rK}(G_{r},X_{r}^{\ve,\d,n}-Z_{r})_{H}\, dr+2\E\int_{0}^{t}e^{-rK}\|B(Z_{r})-F_{r}\|_{L_{2}}^{2}\, dr\\
 & +2\E\int_{0}^{t}e^{-rK}\left(\ve^{\frac{4}{3}}\|\D X_{r}^{\ve,\d,n}\|_{H}^{2}+\ve^{\frac{2}{3}}\|X_{r}^{\ve,\d,n}-Z_{r}\|_{H}^{2}\right)\, dr.
\end{align*}
Note that $\vp(v)\lesssim\|v\|_{H^{1}}^{2}+1$ for $v\in H^{1}$. Using \eqref{eq:strong_soln_visc} we may now first let $\d\to0$, then $\ve\to0$ and then $n\to\infty$ to obtain \eqref{eq:SVI} by lower-semicontinuity of $\vp$ on $H$.

\emph{Step 7: Uniqueness.}

Let $X$ be a continuous SVI solution to \eqref{eq:singular_p_laplace-1} and let $Y^{\ve,\d,n}$ be the (strong) solution to \eqref{eq:full_approx_SVI_constr} with initial condition $y_{0}^{n}\in L^{2}(\O;H^{1})$ satisfying $y_{0}^{n}\to y_{0}$ in $L^{2}(\O;H)$. Then \eqref{eq:SVI} with $Z=Y^{\ve,n}$, $F=B(Z)$ and $G=\ve\D Y^{\ve,n}+\div\phi^{\d}(\nabla Y^{\ve,n})$ yields 
\begin{align*}
 & \E e^{-tK}\|X_{t}-Y_{t}^{\ve,n}\|_{H}^{2}+2\E\int_{0}^{t}e^{-rK}\vp(X_{r})\, dr\\
\le & \E\|x_{0}-y_{0}^{n}\|_{H}^{2}+2\E\int_{0}^{t}e^{-rK}\vp(Y_{r}^{\ve,n})\, dr\\
 & -2\E\int_{0}^{t}e^{-rK}(\ve\D Y_{r}^{\ve,n}+\div\phi^{\d}(\nabla Y_{r}^{\ve,n}),X_{r}-Y_{r}^{\ve,n})_{H}\, dr,
\end{align*}
for a.e.~$t\in[0,T]$. By \eqref{eq:yosida_convergence-2}, for all $x\in H^{1}$ we have
\[
-(\div\phi^{\d}(\nabla Y^{\ve,\d,n}),x-Y^{\ve,\d,n})_{H}+\vp(Y^{\ve,\d,n})\le\vp(x)+C\d(1+\vp(Y^{\ve,\d,n}))\quad dr\otimes\P-\text{a.e.}.
\]
Since $\vp$ is the lower-semicontinuous hull of $\vp$ restricted to $H^{1}$, for a.e.~$(t,\o)\in[0,T]\times\O$, we can choose a sequence $x^{m}\in H^{1}$ such that $x^{m}\to X_{t}(\o)$ and $\vp(x^{m})\to\vp(X_{t}(\o))$. Hence,
\[
-(\div\phi^{\d}(\nabla Y^{\ve,\d,n}),X-Y^{\ve,\d,n})_{H}+\vp(Y^{\ve,\d,n})\le\vp(X)+C\d(1+\vp(Y^{\ve,\d,n}))\quad dr\otimes\P-\text{a.e.}.
\]
Thus,
\begin{align*}
\E e^{-tK}\|X_{t}-Y_{t}^{\ve,\d,n}\|_{H}^{2} & \le\E\|x_{0}-y_{0}^{n}\|_{H}^{2}+C\d\E\int_{0}^{t}e^{-rK}(1+\vp(Y_{r}^{\ve,\d,n}))\, dr\\
 & +2\E\int_{0}^{t}e^{-rK}\left(\ve^{\frac{4}{3}}\|\D Y_{r}^{\ve,\d,n}\|_{H}^{2}+\ve^{\frac{2}{3}}\|X_{r}-Y_{r}^{\ve,\d,n}\|_{H}^{2}\right)\, dr.
\end{align*}
Taking $\d\to0$ then $\ve\to0$ (using \eqref{eq:strong_soln_visc}) and then $n\to\infty$ yields
\begin{align*}
\E\|X_{t}-Y_{t}\|_{H}^{2}\le & e^{tK}\E\|x_{0}-y_{0}\|_{H}^{2},
\end{align*}
for a.e.~$t\in[0,T]$.
\end{proof}

\section{SVI approach to stochastic nonlocal $p$-Laplace equations\label{sec:nonlocal_plp}}

In this section we derive an SVI formulation for stochastic singular nonlocal $p$-Laplace equations with homogeneous Neumann boundary condition of the type
\begin{align}
dX_{t} & \in\left(\int_{\mcO}J(\cdot-\xi)|X_{t}(\xi)-X_{t}(\cdot)|^{p-2}(X_{t}(\xi)-X_{t}(\cdot))\, d\xi\right)dt+B(X_{t})\, dW_{t}\label{eq:nonlocal_plp-1}\\
X_{0} & =x_{0}\in L^{2}(\O,\mcF_{0};L^{2}(\mcO)),\nonumber 
\end{align}
where $p\in[1,2)$, $W$ is a cylindrical Wiener process on some separable Hilbert space $U$, $B:L^{2}(\mcO)\to L_{2}(U,L^{2}(\mcO))$ is Lipschitz continuous and $\mcO$ is a bounded, smooth domain in $\R^{d}$. The kernel $J:\R^{d}\to\R$ is supposed to be a nonnegative, continuous, radial function with compact support, $J(0)>0$ and $\int_{\R^{d}}J(z)\, dz=1$. In particular, we include the multivalued, limiting case $p=1$, for which we set $|r|^{-1}r=\sgn(r)$ to be the maximal monotone extension of the sign function.

In the following we develop an SVI approach to \eqref{eq:nonlocal_plp-1}, thus providing a unified treatment for SPDE of the type \eqref{eq:nonlocal_plp-1} including the multivalued case $p=1$. We let $S=H:=L^{2}(\mcO)$ and define
\[
\vp(u):=\frac{1}{2p}\int_{\mcO}\int_{\mcO}J\left(\z-\xi\right)\lrabs{u(\xi)-u(\z)}^{p}\, d\z\, d\xi,\quad u\in H.
\]
It is easy to see that $\vp$ defines a continuous, convex function on $H$ with subdifferential, if $p>1$, 
\[
A(u):=-\partial\vp(u)=\int_{\mcO}J(\cdot-\xi)|u(\xi)-u(\cdot)|^{p-2}(u(\xi)-u(\cdot))\, d\xi
\]
and, if $p=1$, 
\begin{align*}
A(u):= & -\partial\vp(u)\\
= & \Big\{\int_{\mcO}J(\cdot-\xi)\eta(\xi,\cdot)\, d\xi:\,\|\eta\|_{L^{\infty}}\le1,\,\eta(\xi,\z)=-\eta(\z,\xi)\text{ and }\\
 & \hskip20ptJ(\z-\xi)\eta(\xi,\z)\in J(\z-\xi)\sgn(u(\xi)-u(\z))\text{ for a.e. }(\xi,\z)\in\mcO\times\mcO\Big\},
\end{align*}
for $u\in H$. Hence, we may rewrite \eqref{eq:nonlocal_plp-1} as 
\begin{align*}
dX_{t} & \in-\partial\vp(X_{t})\, dt+B(X_{t})\, dW_{t}\\
X_{0} & =x_{0}.
\end{align*}
There exists an SVI solution to \eqref{eq:nonlocal_plp-1} by Proposition \ref{prop:ex-svi-sln}. Furthermore,
\begin{thm}
\label{thm:SVI-nonlocal}Let $x_{0}\in L^{2}(\Omega,\mcF_{0};H).$ Suppose that \eqref{eq:lip-lip} is satisfied. Then there is a unique continuous SVI solution $X$ to \eqref{eq:nonlocal_plp-1} in the sense of Definition \ref{def:SPDE_SVI}. For two SVI solutions $X$, $Y$ with initial conditions $x_{0},y_{0}\in L^{2}(\Omega;H)$ we have
\begin{equation}
\esssup_{t\in[0,T]}\E\|X_{t}-Y_{t}\|_{H}^{2}\lesssim\E\|x_{0}-y_{0}\|_{H}^{2}.\label{eq:uniqueness-contraction}
\end{equation}
 \end{thm}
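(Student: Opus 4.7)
The proof follows the same strategy as Theorem~\ref{thm:SVI} but is substantially simpler because $\vp$ is defined (and indeed continuous, with $\vp(u) \lesssim 1+\|u\|_H^2$) on all of $H=S=L^2(\mcO)$, so neither a viscosity regularization nor an approximation of the initial datum is required. The plan is to build a continuous SVI solution via a single Moreau--Yosida approximation and to deduce uniqueness together with \eqref{eq:uniqueness-contraction} by testing an arbitrary continuous SVI solution against this approximation.

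First I would consider the Moreau--Yosida approximation $\vp^\delta$ of $\vp$. Since $\partial\vp^\delta$ is Lipschitz on $H$ and $B$ satisfies \eqref{eq:lip-lip}, standard SDE theory in Hilbert spaces yields a unique strong solution $X^\delta\in L^2(\Omega;C([0,T];H))$ to $dX_t^\delta = -\partial\vp^\delta(X_t^\delta)\,dt + B(X_t^\delta)\,dW_t$ with $X_0^\delta = x_0$, together with the uniform bound
\[
\E\sup_{t\in[0,T]}\|X_t^\delta\|_H^2 + \E\int_0^T\vp^\delta(X_r^\delta)\,dr \le C(\E\|x_0\|_H^2+1).
\]
To pass to the singular limit, I would compare $X^\delta$ and $X^{\delta'}$ via Remark~\ref{rmk:varn_sol}: each is an SVI solution and can be used as a test function in the other's inequality (with $G=-\partial\vp^{\delta'}(X^{\delta'})$, $F=B(X^{\delta'})$, so the noise defect vanishes). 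The subgradient inequality for $\vp^{\delta'}$ and symmetrization give, after a Burkholder--Davis--Gundy step,
\[
\E\sup_{t\in[0,T]}e^{-Ct}\|X_t^\delta - X_t^{\delta'}\|_H^2 \lesssim \E\int_0^T \bigl(\vp^{\delta}(X_r^{\delta'})-\vp^{\delta'}(X_r^{\delta'}) + \vp^{\delta'}(X_r^{\delta})-\vp^{\delta}(X_r^{\delta})\bigr)\,dr,
\]
which tends to $0$ as $\delta,\delta'\to 0$ by continuity of $\vp$ on $H$, the uniform bounds above, and $\vp^\delta\nearrow\vp$ pointwise with $\vp^\delta\le\vp$. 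Hence $X^\delta\to X$ in $L^2(\Omega;C([0,T];H))$ for some continuous limit $X$; that $X$ is an SVI solution follows either from Theorem~\ref{thm:SVI_stability} (using that $\vp^\delta\to\vp$ in random Mosco sense by Proposition~\ref{prop:random_Mosco}(ii) with $S=H$) or by a direct passage to the limit exploiting the strong convergence just obtained.

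For uniqueness and \eqref{eq:uniqueness-contraction}, let $X$ be any continuous SVI solution with initial datum $x_0$ and let $Y^\delta$ denote the Moreau--Yosida strong solution of Step~1 with initial datum $y_0$. I would use $Z=Y^\delta$ as a test function in the SVI for $X$, with $G = -\partial\vp^\delta(Y^\delta)$ and $F = B(Y^\delta)$ so that $\|F_r - B(Z_r)\|_{L_2}^2 = 0$. The subgradient inequality $-(\partial\vp^\delta(Y_r^\delta),X_r-Y_r^\delta)_H \le \vp^\delta(Y_r^\delta)-\vp^\delta(X_r)$ combined with $\vp^\delta\le\vp$ produces cancellation of the $\vp(X_r)$ terms and yields
\[
\E e^{-Ct}\|X_t-Y_t^\delta\|_H^2 \le \E\|x_0-y_0\|_H^2 + 2\E\int_0^t e^{-Cr}\bigl(\vp(Y_r^\delta) - \vp^\delta(Y_r^\delta)\bigr)\,dr.
\]
Letting $\delta\to 0$, using that $Y^\delta\to Y$ in $L^2(\Omega;C([0,T];H))$ (a second instance of the convergence from Step~2) and that both $\vp(Y^\delta)\to\vp(Y)$ and $\vp^\delta(Y^\delta)\to\vp(Y)$ in $L^1([0,T]\times\Omega)$, the defect term vanishes and the Gronwall-type contraction \eqref{eq:uniqueness-contraction} follows, in particular implying that $Y$ and any competing SVI solution with initial $y_0$ must coincide.

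The main obstacle is precisely the justification that the Moreau--Yosida defect $\vp(Y^\delta)-\vp^\delta(Y^\delta)$ vanishes in $L^1([0,T]\times\Omega)$ as $\delta\to 0$. This reduces to showing locally uniform convergence $\vp^\delta\to\vp$ on bounded sets of $H$: from $\vp^\delta(v) = \vp(J_\delta v) + \frac{1}{2\delta}\|v-J_\delta v\|_H^2 \ge \vp(J_\delta v)$ and $J_\delta v\to v$ in $H$, the continuity (local boundedness) of $\vp$ together with the uniform $L^2$ bounds on $Y^\delta$ and dominated convergence delivers the required limit. This is the place where the nonlocal structure genuinely helps: unlike the local $p$-Laplace case, where $\vp$ blows up off $W^{1,p}\cap L^2$ and the Moreau--Yosida correction is only controlled up to $O(\delta\|v\|_{H^1}^2)$ (cf.\ \eqref{eq:yosida_convergence-2}), here $\vp$ is finite and continuous on all of $H$, so no compensating regularity of test functions or initial data is needed.
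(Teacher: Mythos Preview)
Your overall strategy mirrors the paper's, but you approximate differently: the paper regularizes the real-variable integrand $\psi(\cdot)=\tfrac1p|\cdot|^p$ pointwise via its Moreau--Yosida approximation $\psi^\delta$ and sets $\vp^\delta(u)=\tfrac12\iint J(\zeta-\xi)\,\psi^\delta(u(\xi)-u(\zeta))\,d\xi\,d\zeta$, whereas you take the Hilbert-space Moreau--Yosida of $\vp$ itself. The paper's choice delivers the quantitative rate $|\vp^\delta(v)-\vp(v)|\lesssim\delta(1+\|v\|_H^2)$ directly from \eqref{eq:yosida_convergence-2}, and the pointwise estimate \eqref{eq:monotone_Y_bound} lifts to $-(\partial\vp^{\delta_1}(u)-\partial\vp^{\delta_2}(v),u-v)_H\lesssim(\delta_1+\delta_2)(1+\|u\|_H^2+\|v\|_H^2)$. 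Your choice can be made to yield the same bounds, but only because the nonlocal structure forces $\|\partial\vp(v)\|_H\lesssim 1+\|v\|_H$ (indeed uniformly bounded when $p=1$); this is the precise quantitative fact behind your correct qualitative observation that $\vp$ is finite and continuous on all of $H$. It gives $0\le\vp(v)-\vp^\delta(v)\le\delta\,\|\partial\vp^\delta(v)\|_H^2\lesssim\delta(1+\|v\|_H^2)$, and this explicit rate should replace your ``locally uniform convergence + dominated convergence'' sketch for the defect $\vp(Y^\delta)-\vp^\delta(Y^\delta)$: as written, local uniformity on bounded sets of $H$ does not produce a $\delta$-independent integrable majorant, since $Y^\delta$ is only bounded in $L^2(\Omega\times[0,T];H)$, not in $L^\infty$.

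There is also a genuine gap in your Cauchy step. The SVI inequality \eqref{eq:SVI} is already stated in expectation for a.e.\ $t$; once you have inserted $X^{\delta'}$ as test function there is no remaining stochastic integral, so there is nothing to which the Burkholder--Davis--Gundy inequality can be applied. Your route therefore yields only $\esssup_t\E\|X_t^\delta-X_t^{\delta'}\|_H^2\to0$, which does not by itself produce a limit in $L^2(\Omega;C([0,T];H))$ and hence no \emph{time-continuous} SVI solution. The paper circumvents this by applying It\^o's formula directly to $\|X^{\delta_1}-X^{\delta_2}\|_H^2$ (both being strong solutions), using the monotonicity-up-to-error bound above, and only then taking $\sup_t$, expectations, and BDG. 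The same fix is available with your Hilbert-space Moreau--Yosida: from $\partial\vp^\delta(u)\in\partial\vp(J_\delta u)$ and $u-J_\delta u=\delta\,\partial\vp^\delta(u)$ one obtains $(\partial\vp^{\delta_1}(u)-\partial\vp^{\delta_2}(v),u-v)_H\ge -C(\delta_1+\delta_2)(1+\|u\|_H^2+\|v\|_H^2)$, after which the paper's Step~2 argument goes through verbatim.
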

\begin{proof}
We start by proving the existence of continuous SVI solutions to \eqref{eq:nonlocal_plp-1}. We recall that Proposition \ref{prop:ex-svi-sln} implies the existence of SVI solutions to \eqref{eq:nonlocal_plp-1} based on the Moreau-Yosida approximation of $\vp$. In order to prove uniqueness of (continuous) SVI solutions to \eqref{eq:nonlocal_plp-1} we need to consider an alternative approximation $\vp^{\d}$. Indeed, it turns out that in order to prove uniqueness of SVI solutions it is essential that the approximations satisfy $\vp^{\d}(v)\ge\vp(v)+\mathrm{Err}(v)$ for some well-controlled error term $\mathrm{Err}$. For the Moreau-Yosida approximation we rather have $\vp^{\d}\le\vp$ and no lower bound on $\vp^{\d}$ is known in general.

\emph{Step 1: Strong approximating SPDE.} We consider non-singular approximations of the nonlinearity $\vp$: Let $\psi,\psi^{\d},\phi^{\d},R_{\d}$ be as in Appendix \ref{sec:Moreau-Yosida}. We then consider 
\begin{align}
\vp^{\d}(u) & :=\frac{1}{2}\int_{\mcO}\int_{\mcO}J\left(\z-\xi\right)\psi^{\d}(u(\xi)-u(\z))\, d\xi\, d\z\label{eq:nonlocal_nonlin_approx}\\
A^{\d}(u):=-\partial\vp^{\d}(u) & =\int_{\mcO}J(\cdot-\xi)\phi^{\d}(u(\xi)-u(\cdot))\, d\xi,\quad u\in H.\nonumber 
\end{align}
and, as a strong approximation, the non-singular, non-degenerate SPDE:
\begin{align}
dX_{t}^{\d} & =-\partial\vp^{\d}(X_{t}^{\d})dt+B(X_{t}^{\d})dW_{t},\label{eq:nonlocal_approx}\\
X_{0}^{\d} & =x_{0}.\nonumber 
\end{align}
By \cite{PR07} there is a unique variational solution to \eqref{eq:nonlocal_approx} constructed along the trivial Gelfand triple $V=H\subseteq V^{*}$ and with $\a=2$. We verify, keeping in mind that $V=H=V^{\ast}$:
\begin{enumerate}
\item [(H1)] Hemi-continuity: $A:V\to V^{*}$ is continuous.
\item [(H2)] Monotonicity (compare with \cite[Lemma 6.5]{AMRT11}):
\begin{align*}
 & 2{}_{V^{*}}\<A^{\d}(u)-A^{\d}(v),u-v\>_{V}\\
= & \int_{\mcO}\int_{\mcO}J(\z-\xi)\phi^{\d}(u(\xi)-u(\z))((u-v)(\z)-(u-v)(\xi))\, d\xi\, d\z\\
 & -\int_{\mcO}\int_{\mcO}J(\z-\xi)\phi^{\d}(v(\xi)-v(\z))((u-v)(\z)-(u-v)(\xi))\, d\xi\, d\z\\
= & -\int_{\mcO}\int_{\mcO}J(\z-\xi)\left(\phi^{\d}(u(\xi)-u(\z))-\phi^{\d}(v(\xi)-v(\z))\right)\\
 & \hskip38pt(u(\xi)-u(\z)-(v(\xi)-v(\z)))\, d\xi\, d\z\\
\le & 0.
\end{align*}

\item [(H3)] Coercivity:
\begin{align*}
2{}_{V^{*}}\<A^{\d}(u),u\>_{V} & =-\int_{\mcO}\int_{\mcO}J(\z-\xi)\phi^{\d}(u(\xi)-u(\z))(u(\xi)-u(\z))\, d\xi\, d\z\\
 & \le\|u\|_{H}^{2}-\|u\|_{H}^{2}.
\end{align*}

\item [(H4)] Growth: Using Hölder's inequality
\begin{align*}
|{}_{V^{*}}\<A^{\d}(v),u\>_{V}|\le & \frac{1}{2}\int_{\mcO}\int_{\mcO}J^{\frac{1}{2}}(\z-\xi)|\phi^{\d}|(v(\xi)-v(\z))J^{\frac{1}{2}}(\z-\xi)|u(\xi)-u(\z)|\, d\xi\, d\z\\
\le & \frac{1}{2}\left(\int_{\mcO}\int_{\mcO}J(\z-\xi)|\phi^{\d}|^{2}(v(\xi)-v(\z))\, d\z\, d\xi\right)^{\frac{1}{2}}\\
 & \left(\int_{\mcO}\int_{\mcO}J(\z-\xi)|u(\xi)-u(\z)|^{2}\, d\xi\, d\z\right)^{\frac{1}{2}}\\
\lesssim & \left(\int_{\mcO}\int_{\mcO}J(\z-\xi)|\phi^{\d}|^{2}(v(\xi)-v(\z))\, d\z\, d\xi\right)^{\frac{1}{2}}\|u\|_{V}.
\end{align*}
By \eqref{eq:phi-delta-bound} we have $|\phi^{\d}|^{2}(r)\le C(1+|r|^{2})$ and thus
\begin{align*}
\|A(v)\|_{V^{*}} & \le C\left(1+\|v\|_{V}\right).
\end{align*}

\end{enumerate}
Using \cite[Theorem 4.2.4]{PR07} there is a unique variational solution $X^{\d}$ to \eqref{eq:nonlocal_approx} and
\begin{equation}
\E\sup_{t\in[0,T]}\|X_{t}^{\d}\|_{H}^{2}\le C<\infty,\label{eq:nonlocal_bound}
\end{equation}
for some constant $C>0$ independent of $\d>0$. Since $A^{\d}:H\to H$ is Lipschitz continuous $X^{\d}$ is a strong solution to \eqref{eq:nonlocal_approx}.

\emph{Step 2: Convergence for $\d\to0$. }For two solutions $X^{\d_{1}},X^{\d_{2}}$ to \eqref{eq:nonlocal_approx} with initial condition $x_{0}\in L^{2}(\O;H)$ we have by Itô's formula 
\begin{align*}
e^{-Kt}\|X_{t}^{\d_{1}}-X_{t}^{\d_{2}}\|_{H}^{2}= & 2\int_{0}^{t}e^{-Kr}(-\partial\vp^{\d_{1}}(X_{r}^{\d_{1}})+\partial\vp^{\d_{2}}(X_{r}^{\d_{2}}),X_{r}^{\d_{1}}-X_{r}^{\d_{2}})_{H}\, dr\\
 & +2\int_{0}^{t}e^{-Kr}(X_{r}^{\d_{1}}-X_{r}^{\d_{2}},B(X_{r}^{\d_{1}})-B(X_{r}^{\d_{2}}))_{H}\, dW_{r}\\
 & +\int_{0}^{t}e^{-Kr}\|B(X_{r}^{\d_{1}})-B(X_{r}^{\d_{2}})\|_{L_{2}}^{2}\, dr\\
 & -K\int_{0}^{t}e^{-Kr}\|X_{r}^{\d_{1}}-X_{r}^{\d_{2}}\|_{H}^{2}\, dr.
\end{align*}
We observe that
\begin{align*}
 & -(\partial\vp^{\d_{1}}(u)-\partial\vp^{\d_{2}}(v),u-v)_{H}\\
 & =-\int_{\mcO}\int_{\mcO}J(\z-\xi)\left(\phi^{\d_{1}}(u(\xi)-u(\z))-\phi^{\d_{2}}(v(\xi)-v(\z))\right)\\
 & \hskip49pt(u(\xi)-u(\z)-(v(\xi)-v(\z)))\, d\xi\, d\z
\end{align*}
and due to \eqref{eq:monotone_Y_bound} we obtain
\begin{align*}
 & -(\partial\vp^{\d_{1}}(u)-\partial\vp^{\d_{2}}(v),u-v)_{H}\\
 & \le C(\d_{1}+\d_{2})\int_{\mcO}\int_{\mcO}J(\z-\xi)\left(1+|u(\xi)-u(\z)|^{2}+|v(\xi)-v(\z)|^{2}\right)\, d\xi\, d\z\\
 & \le C(\d_{1}+\d_{2})\left(1+\|u\|_{H}^{2}+\|v\|_{H}^{2}\right).
\end{align*}
In conclusion,
\begin{align*}
e^{-Kt}\|X_{t}^{\d_{1}}-X_{t}^{\d_{2}}\|_{H}^{2}= & C(\d_{1}+\d_{2})\int_{0}^{t}e^{-Kr}\left(1+\|X_{r}^{\d_{1}}\|_{H}^{2}+\|X_{r}^{\d_{2}}\|_{H}^{2}\right)\, dr\\
 & +2\int_{0}^{t}e^{-Kr}(X_{r}^{\d_{1}}-X_{r}^{\d_{2}},B(X_{r}^{\d_{1}})-B(X_{r}^{\d_{2}}))_{H}\, dW_{r}\\
 & +\int_{0}^{t}e^{-Kr}\|B(X_{r}^{\d_{1}})-B(X_{r}^{\d_{2}})\|_{L_{2}}^{2}\, dr\\
 & -K\int_{0}^{t}e^{-Kr}\|X_{r}^{\d_{1}}-X_{r}^{\d_{2}}\|_{H}^{2}\, dr.
\end{align*}
Using the Burkholder-Davis-Gundy inequality and \eqref{eq:nonlocal_bound}, we obtain
\begin{align}
\E\sup_{t\in[0,T]}e^{-Kt}\|X_{t}^{\d_{1}}-X_{t}^{\d_{2}}\|_{H}^{2}\le & C(\d_{1}+\d_{2})(\E\|x_{0}\|_{H}^{2}+1),\label{eq:stability_ic_SFDE-2-1}
\end{align}
for $K>0$ large enough. Hence, we obtain the existence of a sequence of $\{\Fcal_{t}\}$-adapted, time-continuous processes $X^{\d}\in L^{2}(\O;C([0,T];H))$ with $X_{0}^{\d}=x_{0}$ and an $\{\Fcal_{t}\}$-adapted process $X\in L^{2}(\O;C([0,T];H))$ with $X_{0}=x_{0}$ such that
\[
\E\sup_{t\in[0,T]}\|X_{t}^{\d}-X_{t}\|_{H}^{2}\to0\quad\text{for }\d\to0.
\]

\emph{Step 3:}\textit{\emph{ }}\textit{Energy inequality. }An application of Itô's formula and a standard localization argument yield
\[
\E\|X_{t}^{\d}\|_{H}^{2}=\E\|x_{0}\|_{H}^{2}-2\E\int_{0}^{t}(\partial\vp^{\d}(X_{r}^{\d}),X_{r}^{\d})_{H}dr+\E\int_{0}^{t}\|B(X_{r}^{\d})\|_{L_{2}(U,H)}^{2}dr.
\]
By the definition of the subdifferential we have
\begin{align*}
(-\partial\vp^{\d}(X^{\d}),X^{\d})_{H} & =(\partial\vp^{\d}(X^{\d}),0-X^{\d})_{H}\le-\vp^{\d}(X^{\d})\quad dt\otimes\P-\text{a.s.}
\end{align*}
and by Lipschitz continuity of $B$
\[
\|B(X_{r}^{\d})\|_{L_{2}(U,H)}^{2}\le C(1+\|X_{r}^{\d}\|_{H}^{2}).
\]
Hence, using Gronwall's Lemma yields
\[
\E e^{-Ct}\|X_{t}^{\d}\|_{H}^{2}+\E\int_{0}^{t}e^{-Cr}\vp^{\d}(X_{r}^{\d})dr\lesssim\E\|x_{0}\|_{H}^{2}+1.
\]
Due to \eqref{eq:yosida_convergence-2} we thus obtain
\[
\E\|X_{t}^{\d}\|_{H}^{2}+\E\int_{0}^{t}\vp(X_{r}^{\d})dr\lesssim\E\|x_{0}\|_{H}^{2}+1+\d\E\int_{0}^{t}\|X_{r}^{\d}\|_{H}^{2}dr.
\]
Taking the limit $\d\to0$ and using lower semicontinuity of $v\mapsto\E\int_{0}^{t}\vp(v)dr$ on $L^{2}([0,T]\times\O;H)$ yields Definition \ref{def:SPDE_SVI}, (i).

\emph{Step 4: Variational inequality. }It remains to prove that the time-continuous process $X$ solves the SVI. Since $X^{\d}$ is a strong solution to \eqref{eq:nonlocal_approx}, by Remark \ref{rmk:varn_sol} for each $(Z,F,G,Z_{0})$ as in Definition \ref{def:SPDE_SVI} we have that 
\begin{align}
 & \E e^{-Ct}\|X_{t}^{\d}-Z_{t}\|_{H}^{2}+2\E\int_{0}^{t}e^{-Cr}\vp^{\d}(X_{r}^{\d})\, dr\nonumber \\
 & \le\E\|x_{0}-Z_{0}\|_{H}^{2}+2\E\int_{0}^{t}e^{-Cr}\vp^{\d}(Z_{r})\, dr-2\E\int_{0}^{t}e^{-Cr}(G_{r},X_{r}^{\d}-Z_{r})_{H}\, dr\label{eq:SVI-1-1-1}\\
 & +2\E\int_{0}^{t}e^{-Cr}\|F_{r}-B(Z_{r})\|_{L_{2}(U,H)}^{2}\, dr\quad\forall t\in[0,T].\nonumber 
\end{align}
By \eqref{eq:yosida_convergence-2} we have 
\begin{align*}
|\vp^{\d}(Z_{r})-\vp(Z_{r})| & \lesssim\d(1+\vp(Z_{r}))\\
 & \lesssim\d(1+\|Z_{r}\|_{H}^{2}).
\end{align*}
Mosco convergence of $\vp^{\delta}\to\vp$ can easily be verified using Fatou's lemma and Lebesgue's dominated convergence and the fact that $\psi^{\delta}$ converges pointwise and Mosco to $\psi.$ Hence, by Mosco convergence of integral functionals (see Appendix \ref{sec:Random-Mosco-convergence}), taking the limit in \eqref{eq:SVI-1-1-1} implies that $X$ is a continuous SVI solution to \eqref{eq:nonlocal_plp-1}.

\emph{Step 5: Uniqueness.} Let $X$ be an SVI solution to \eqref{eq:nonlocal_plp-1} and let $\{Y^{\d}\}$ be the (strong) solution to \eqref{eq:nonlocal_approx} with initial condition $y_{0}\in L^{2}(\O;H)$. Then \eqref{eq:test-eq} with $Z=Y^{\d}$, $F=B(Z)$ and $G=-\partial\vp^{\d}(Y^{\d})$ yield 
\begin{align*}
 & \E e^{-tC}\|X_{t}-Y_{t}^{\d}\|_{H}^{2}+2\E\int_{0}^{t}e^{-rC}\vp(X_{r})\, dr\\
\le & \E\|x_{0}-y_{0}\|_{H}^{2}+2\E\int_{0}^{t}e^{-rC}\vp(Y_{r}^{\d})\, dr\\
 & +2\E\int_{0}^{t}e^{-rC}(\partial\vp^{\d}(Y_{r}^{\d}),X_{r}-Y_{r}^{\d})_{H}\, dr\quad\text{for a.e. }t\ge0.
\end{align*}
By the subgradient property and \eqref{eq:MY-ineq},
\[
(\partial\vp^{\d}(Y_{r}^{\d}),X_{r}-Y_{r}^{\d})_{H}+\vp^{\d}(Y_{r}^{\d})\le\vp^{\d}(X_{r})\le\vp(X_{r})\quad dr\otimes\P-\text{a.e.}.
\]
Moreover, due to \eqref{eq:yosida_convergence-2} we have 
\begin{align*}
|\vp^{\d}(Y_{r}^{\d})-\vp(Y_{r}^{\d})| & \lesssim\d(1+\vp(Y_{r}^{\d}))\\
 & \lesssim\d(1+\|Y_{r}^{\d}\|_{H}^{2}).
\end{align*}
Thus,
\begin{align*}
\E\|X_{t}-Y_{t}^{\d}\|_{H}^{2} & \le\E e^{tC}\|x_{0}-y_{0}\|_{H}^{2}+\d(1+\E\|Y_{r}^{\d}\|_{H}^{2})\quad\text{for a.e. }t\ge0.
\end{align*}
Since by step two we have $Y^{\d}\to Y$ in $C([0,T];L^{2}(\O;H))$ we may take the limit $\d\to0$, which by weak lower semicontinuity of the norm concludes the proof.
\end{proof}

\section{Convergence of stochastic nonlocal to local $p$-Laplace equations\label{sec:to_local}}

In this section, we investigate the convergence of the solutions to the stochastic nonlocal $p$-Laplace equation to solutions of the stochastic (local) $p$-Laplace equation under appropriate rescaling of the kernel $J$.

More precisely, let $\Ocal\subset\R^{d}$ be a bounded, convex, smooth domain and let $J:\R^{d}\to\R$ be a nonnegative continuous radial function with compact support, $J(0)>0$, $\int_{\R^{d}}J(z)\, dz=1$ and $J(x)\ge J(y)$ for all $|x|\le|y|$.

For $p\in[1,2)$, $\eps>0$, we then define the rescaled functionals 
\[
\vp^{\ve}(u):=\frac{C_{J,p}}{2p\eps^{d}}\int_{\Ocal}\int_{\Ocal}J\left(\frac{\xi-\zeta}{\eps}\right)\lrabs{\frac{u(\zeta)-u(\xi)}{\eps}}^{p}\, d\zeta d\xi,
\]
for $u\in L^{p}(\mcO)$, where 
\[
C_{J,p}^{-1}:=\frac{1}{2}\int_{\R^{d}}J(z)|z_{d}|^{p}\, dz.
\]

Furthermore, for $p\in(1,2),$ we set 
\[
\vp(u):=\begin{cases}
\frac{1}{p}\int_{\Ocal}\abs{\nabla u}^{p}\, d\xi, & \quad\text{if}\quad u\in W^{1,p}(\Ocal),\\
+\infty, & \quad\text{if}\quad u\in L^{p}(\Ocal)\setminus W^{1,p}(\Ocal),
\end{cases}
\]
whereas, for $p=1$, we set
\[
\vp(u):=\begin{cases}
\|u\|_{TV}, & \quad\text{if}\quad u\in BV(\Ocal),\\
+\infty, & \quad\text{if}\quad u\in L^{1}(\Ocal)\setminus BV(\Ocal),
\end{cases}
\]
By Theorem \ref{thm:SVI-nonlocal} for each $\ve>0$, there is a unique time-continuous SVI solution $X^{\ve}$ to the stochastic nonlocal $p$-Laplace equation 
\begin{align}
dX_{t}^{\ve} & \in-\partial_{L^{2}}\varphi_{\ve}(X_{t}^{\ve})\, dt+B(X_{t}^{\ve})dW_{t},\label{eq:nonlocal_plp}\\
X_{0}^{\ve} & =x\nonumber 
\end{align}
and by Theorem \ref{thm:SVI} there is a unique time-continuous SVI solution to the stochastic (local) $p$-Laplace equation
\begin{align}
dX_{t} & \in-\partial_{L^{2}}\varphi(X_{t})\, dt+B(X_{t})dW_{t},\label{eq:local_plp}\\
X_{0} & =x,\nonumber 
\end{align}
where $\partial_{L^{2}}\varphi$ denotes the $L^{2}$ subgradient of $\vp$ restricted to $L^{2}.$
\begin{thm}
\label{thm:weak-convergence-thm} Let $x_{0}\in L^{2}(\Omega,\mcF_{0};H)$ and let $X^{\ve}$, $X$ be the time-continuous SVI solution to \eqref{eq:nonlocal_plp}, \eqref{eq:local_plp} respectively. Then
\[
X^{\ve}\rightharpoonup X\quad\text{in }L^{2}([0,T]\times\O;H).
\]
\end{thm}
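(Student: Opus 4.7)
The plan is to invoke the abstract stability theorem \ref{thm:SVI_stability} with common test space $S = H^{1}(\mcO)$. First, observe that the SVI solution $X^{\ve}$ produced by Theorem \ref{thm:SVI-nonlocal}, being an SVI solution with test space $L^{2}(\mcO)$, is a fortiori an SVI solution in the sense of Definition \ref{def:SPDE_SVI} with the smaller test space $S = H^{1}(\mcO)$, since restricting the class of admissible test functions only weakens the SVI requirement. The constant in the energy bound \eqref{eq:SVI_energy} depends only on the Lipschitz constant of $B$ and $\|B(0)\|_{L_{2}(U,H)}$, hence not on $\ve$, as is visible from Step 3 of the proof of Theorem \ref{thm:SVI-nonlocal}. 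Uniqueness of SVI solutions to the local $p$-Laplace equation is provided by Theorem \ref{thm:SVI}, so that once Theorem \ref{thm:SVI_stability} applies its final clause will upgrade the subsequential weak limit to weak convergence of the full sequence. The only nontrivial task is therefore to verify $\vp^{\ve} \to \vp$ in random Mosco sense.

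For this I would appeal to Proposition \ref{prop:random_Mosco}(ii), which reduces the question to three checks: (a) Mosco convergence $\vp^{\ve} \to \vp$ on $L^{2}(\mcO)$; (b) the pointwise inequality $\limsup_{\ve \to 0} \vp^{\ve}(u) \le \vp(u)$ for all $u \in H^{1}(\mcO)$; and (c) a uniform domination $\vp^{\ve}(u) \le C(1 + \vp(u) + \|u\|_{H^{1}}^{2})$ on $H^{1}(\mcO)$ with $C$ independent of $\ve \in (0,1]$. The liminf half of (a), namely $\liminf_{\ve} \vp^{\ve}(u^{\ve}) \ge \vp(u)$ whenever $u^{\ve} \to u$ in $L^{2}(\mcO)$, is the classical theorem of Bourgain-Brezis-Mironescu for $p \in (1,2)$ and of Ponce in the limiting case $p = 1$, where the limit is identified with the total-variation seminorm; the recovery sequence is furnished by (b) together with density of smooth functions in the domain of $\vp$.

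The key elementary computation underlying (b) and (c) is a change of variables $h = (\zeta - \xi)/\ve$ combined with Jensen's inequality. By convexity of $\mcO$, whenever $\xi, \zeta = \xi + \ve h \in \mcO$ the entire segment $\{\xi + s\ve h : s \in [0,1]\}$ lies in $\mcO$, and so for $u \in W^{1,p}(\mcO)$ one has the pointwise bound
\[
\left|\frac{u(\xi + \ve h) - u(\xi)}{\ve}\right|^{p} \le |h|^{p} \int_{0}^{1} |\nabla u(\xi + s\ve h)|^{p}\, ds.
\]
Substituting into $\vp^{\ve}(u)$ and applying Fubini yields $\vp^{\ve}(u) \le C_{J,p,d}\, \vp(u)$ with a constant independent of $\ve$. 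For $u \in H^{1}(\mcO)$ with $p \in [1,2)$, H\"older's inequality on the bounded domain gives $\vp(u) \le C \|u\|_{H^{1}}^{p} \le C(1+\|u\|_{H^{1}}^{2})$, yielding (c). Point (b) then follows from the pointwise limit $\lim_{\ve \to 0} \vp^{\ve}(u) = \vp(u)$ for $u \in C^{\infty}(\bar{\mcO})$ (Taylor expansion), combined with density and the uniform bound (c).

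With random Mosco convergence in hand, Theorem \ref{thm:SVI_stability} produces a subsequence $X^{\ve_{k}} \rightharpoonup \tilde{X}$ in $L^{2}([0,T] \times \O; H)$ with $\tilde{X}$ an SVI solution to \eqref{eq:local_plp}, and uniqueness (Theorem \ref{thm:SVI}) forces $\tilde{X} = X$. The main obstacle is the Mosco liminf inequality (a): this is the single step genuinely exploiting the specific scaling of $J^{\ve}$ and the nonlocal-to-local geometry, while all the remaining ingredients are elementary estimates or abstract consequences of Theorem \ref{thm:SVI_stability}. A minor additional care is needed for $p = 1$, where the density argument in (b) must use mollification adapted to the BV setting, but this is standard.
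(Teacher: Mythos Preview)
Your proposal is correct and follows essentially the same route as the paper: verify Proposition~\ref{prop:random_Mosco}(ii) with $S=H^{1}(\mcO)$ via the uniform bound $\vp^{\ve}(u)\le C\vp(u)$ and the nonlocal-to-local convergence results (the paper carries these out explicitly as Proposition~\ref{prop:neumann-convergence}, citing \cite{AVMRT10,Brez2002,B83}, where you invoke Bourgain--Brezis--Mironescu/Ponce and a Jensen--Taylor--density argument), then apply Theorem~\ref{thm:SVI_stability} together with uniqueness from Theorem~\ref{thm:SVI}. One minor imprecision: the Mosco liminf in your item~(a) must hold under \emph{weak} (not strong) $L^{2}$ convergence, but the results you cite yield this via the accompanying compactness statement, so your sketch stands.
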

\begin{proof}
We shall verify the conditions of Proposition \ref{prop:random_Mosco}, (ii), which will conclude the proof by an application of Theorem \ref{thm:SVI_stability}. Hence, we need to show that \eqref{eq:bound_for_vp-2} is satisfied. To do so, we first note that
\begin{align}
C_{J,p}^{-1} & =\frac{1}{2}\int_{\R^{d}}J(|z|)|z\cdot e_{d}|^{p}dz\label{eq:radial_profile_calculation}\\
 & =\frac{1}{2}\int_{\R^{d}}J(|z|)|z|^{p}\left|\frac{z}{|z|}\cdot e_{d}\right|^{p}dz\nonumber \\
 & =\frac{1}{2}\int_{\R_{+}}\int_{S^{d-1}}J(r)r^{p+d-1}|\s\cdot e_{d}|^{p}d\s dr\nonumber \\
 & =\frac{K_{p,d}}{2}\int_{\R_{+}}J(r)r^{p+d-1}dr,\nonumber 
\end{align}
where
\[
K_{p,d}:=\int_{S^{d-1}}|\s\cdot e_{d}|^{p}d\s.
\]
Hence,
\[
\frac{C_{J,p}K_{p,d}}{2}\int_{\R_{+}}J(r)r^{p+d-1}dr=1.
\]
Thus, by \cite[Proposition IX.3]{B83}, for each $u\in W^{1,p}(\Ocal)=\dom(\vp)$, if $p\in(1,2)$, 
\begin{align*}
\vp^{\ve}(u) & =\frac{C_{J,p}}{2p\eps^{d}}\int_{\Ocal}\int_{\Ocal}J\left(\frac{\xi-\zeta}{\eps}\right)\lrabs{\frac{u(\zeta)-u(\xi)}{\eps}}^{p}\, d\zeta d\xi\\
 & =\frac{C_{J,p}}{2p\eps^{p}}\int_{\R^{d}}J\left(z\right)\int_{\Ocal}\lrabs{\bar{u}(\xi+\ve z)-u(\xi)}^{p}\, d\xi dz\\
 & \le\frac{C_{J,p}}{2p\eps^{p}}\int_{\R^{d}}J\left(z\right)\int_{\Ocal}\lrabs{\nabla u(\xi)}^{p}\, d\xi|\ve z|^{p}dz\\
 & =\frac{C_{J,p}}{2p}\int_{\R^{d}}J\left(z\right)|z|^{p}dz\int_{\Ocal}\lrabs{\nabla u(\xi)}^{p}\, d\xi\\
 & =d\s_{d}\frac{C_{J,p}}{2p}\int_{\R_{+}}J\left(r\right)r^{p+d-1}dr\int_{\Ocal}\lrabs{\nabla u(\xi)}^{p}\, d\xi\\
 & =\frac{d\s_{d}}{K_{p,d}}\frac{1}{p}\int_{\Ocal}\lrabs{\nabla u(\xi)}^{p}\, d\xi\\
 & =C\vp(u),
\end{align*}
for each $u\in BV(\Ocal)$, if $p=1$, resp., by \cite[eqs. (14)--(16)]{Brez2002}
\begin{align*}
\vp^{\ve}(u) & =\frac{C_{J,1}}{2\eps^{d}}\int_{\Ocal}\int_{\Ocal}J\left(\frac{\xi-\zeta}{\eps}\right)\lrabs{\frac{u(\zeta)-u(\xi)}{\eps}}\, d\zeta d\xi\\
 & =\frac{C_{J,1}}{2\eps}\int_{\R^{d}}J\left(z\right)\int_{\Ocal}\lrabs{\bar{u}(\xi+\ve z)-u(\xi)}\, d\xi dz\\
 & \le\frac{C_{J,1}}{2\eps}\int_{\R^{d}}J\left(z\right)|\ve z|dz\,|Du|(\Ocal)\\
 & =\frac{C_{J,1}}{2}\int_{\R^{d}}J\left(z\right)|z|dz\,|Du|(\Ocal)\\
 & =d\s_{d}\frac{C_{J,1}}{2}\int_{\R_{+}}J\left(r\right)r^{d}dr\,|Du|(\Ocal)\\
 & =\frac{d\s_{d}}{K_{1,d}}\|u\|_{TV}\\
 & =C\vp(u),
\end{align*}
where we have denoted the total variation of the vector measure $Du$ by $|Du|$, that is, $|Du|(\Ocal)=\|u\|_{TV}$. By Proposition \ref{prop:neumann-convergence} below we can apply Theorem \ref{thm:SVI_stability} to conclude the proof.\end{proof}
\begin{prop}
\label{prop:neumann-convergence}Let $\eps_{n}\searrow0$ as $n\to\infty$. Then
\begin{enumerate}
\item For each sequence $u^{\ve_{n}}\rightharpoonup u$ weakly in $L^{p}(\Ocal)$ as $n\to\infty$, we have that 
\[
\liminf_{n\to\infty}\vp^{\ve_{n}}(u^{\ve_{n}})\ge\vp(u).
\]

\item For each $u\in W^{1,p}(\Ocal)$ (if $p\in(1,2)$), for each $u\in BV(\Ocal)$, resp. (if $p=1$), it holds that 
\[
\lim_{n\to\infty}\vp^{\ve_{n}}(u)=\vp(u).
\]

\end{enumerate}
In particular, $\vp^{\ve}\to\vp$ in Mosco sense in $L^{2}$.\end{prop}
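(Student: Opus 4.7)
The plan is to prove (i) and (ii) separately: (ii) is a direct Bourgain--Brezis--Mironescu calculation plus density, while (i) is the heart of the proposition and is handled via convex duality. Together these give Mosco convergence in $L^p(\mcO)$; the final clause (Mosco convergence in $L^2$) will then follow from a short embedding argument since $\mcO$ is bounded and $p \le 2$.

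For (ii), I would start with $u\in C^2(\overline{\mcO})$ and change variables $\zeta=\xi+\ve z$ in $\vp^\ve$ to rewrite
\[
\vp^\ve(u)=\frac{C_{J,p}}{2p}\int_{\R^d}J(z)\int_{\mcO_{\ve,z}}\left|\frac{\bar{u}(\xi+\ve z)-u(\xi)}{\ve}\right|^p d\xi\, dz,
\]
where $\mcO_{\ve,z}:=\{\xi\in\mcO:\xi+\ve z\in\mcO\}$. Since $u$ is smooth the difference quotient converges pointwise to $z\cdot\nabla u(\xi)$ and is dominated by $|z|\,\|\nabla u\|_{\infty}$; as $J$ has compact support, dominated convergence yields
\[
\lim_{\ve\to 0}\vp^\ve(u)=\frac{C_{J,p}}{2p}\int_{\R^d}J(z)\int_\mcO|z\cdot\nabla u(\xi)|^p d\xi\, dz,
\]
and the spherical-integration identity \eqref{eq:radial_profile_calculation}, applied directionally via radial symmetry of $J$, gives $\int_{\R^d}J(z)|z\cdot w|^p dz=(2/C_{J,p})|w|^p$ for every $w\in\R^d$, so the limit equals $\vp(u)$. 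For general $u\in W^{1,p}(\mcO)$ I would approximate by smooth $u^m$ in $W^{1,p}$ and use the uniform upper bound $\vp^\ve(v)\le C\vp(v)$ already obtained in the proof of Theorem \ref{thm:weak-convergence-thm} to commute the two limits. The case $u\in BV(\mcO)$, $p=1$, is handled by area-strict approximation by $C^\infty\cap BV$.

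For (i), let $u^{\ve_n}\rightharpoonup u$ in $L^p(\mcO)$ with $\liminf_n \vp^{\ve_n}(u^{\ve_n})<\infty$, and extract a subsequence achieving the liminf. The key is the Legendre-dual representation
\[
\vp(u)=\sup_{\eta\in C_c^\infty(\mcO;\R^d)}\left\{-\int_\mcO u\,\div\eta\, d\xi-\tfrac{1}{p'}\int_\mcO|\eta|^{p'}d\xi\right\}\qquad(p\in(1,2)),
\]
with the analogue $\|u\|_{TV}=\sup\{-\int u\,\div\eta:\|\eta\|_\infty\le 1\}$ when $p=1$. I fix smooth $\eta$ and apply Young's inequality $a^p/p\ge ab-|b|^{p'}/p'$ pointwise inside the integrand of $\vp^{\ve_n}(u^{\ve_n})$, with $a=J^{1/p}_{\ve_n}(\xi-\zeta)\,(u^{\ve_n}(\zeta)-u^{\ve_n}(\xi))/\ve_n$ and $b=J^{1/p'}_{\ve_n}(\xi-\zeta)\,\eta(\tfrac{\xi+\zeta}{2})\cdot(\xi-\zeta)/\ve_n$. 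After integration and antisymmetrization, the $a\cdot b$ term rewrites as a pairing of $u^{\ve_n}$ against a smooth (mollified) kernel that tends strongly in $L^{p'}$ to $-\div\eta$, and weak $L^p$-convergence of $u^{\ve_n}$ drives it to $-\int_\mcO u\,\div\eta$; the $L^{p'}$-penalty term converges to $\tfrac{1}{p'}\int|\eta|^{p'}$ by the same dominated-convergence argument as in (ii). Taking $\sup_\eta$ yields $\liminf_n\vp^{\ve_n}(u^{\ve_n})\ge\vp(u)$. The case $p=1$ is identical with $\|\eta\|_\infty\le 1$ in place of the $L^{p'}$ penalty.

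The main obstacle is the convergence of the nonlocal bilinear pairing in (i): weak $L^p$ control is insufficient to pass to a limit in the raw difference quotients, so the duality/Young's-inequality trick is essential, as it effectively transfers the derivative onto a fixed smooth $\eta$ via nonlocal ``integration by parts''. Boundary corrections arising from $\mcO\setminus\mcO_{\ve,z}$ are negligible because $J$ has compact support, and the multi-valued case $p=1$ requires care via area-strict density. These are precisely the Bourgain--Brezis--Mironescu/Ponce $\Gamma$-convergence results for this class of nonlocal $p$-energies, to which our setup (after normalization by $C_{J,p}$) reduces exactly. Finally, the last clause of the proposition---Mosco convergence in $L^2$---follows at once: weak convergence in $L^2(\mcO)$ implies weak convergence in $L^p(\mcO)$ since $\mcO$ is bounded and $p\le 2$, so (i) transfers to $L^2$; for (ii) the identity recovery sequence $u^\ve\equiv u$ trivially converges strongly in $L^2$ on the effective domain $L^2\cap\dom(\vp)$.
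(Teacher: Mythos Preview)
Your treatment of part (ii) is fine and essentially equivalent to the paper's, which simply cites \cite{Brez2002} for the limit computation rather than doing it by hand for smooth $u$ plus density.

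Part (i), however, has a genuine gap in the constant. With your choice $b=J_{\ve}^{1/p'}(\xi-\zeta)\,\eta\!\left(\tfrac{\xi+\zeta}{2}\right)\cdot\tfrac{\xi-\zeta}{\ve}$, a direct computation (change variables $\zeta=\xi-\ve z$, Taylor expand, use $\int J(z)z\,dz=0$ and $\int J(z)z_iz_j\,dz=\tfrac{1}{d}\delta_{ij}\int J|z|^2$) shows that the bilinear term converges to $A\int_\mcO u\,\div\eta$ with $A=\tfrac{C_{J,p}}{2d}\int J(z)|z|^2dz$, \emph{not} to $-\int u\,\div\eta$; and the penalty converges to $\tfrac{B}{p'}\int|\eta|^{p'}$ with $B=C_{J,p}/C_{J,p'}$, \emph{not} to $\tfrac{1}{p'}\int|\eta|^{p'}$. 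Optimizing over $\eta$ then gives only $\liminf_n\vp^{\ve_n}(u^{\ve_n})\ge \tfrac{A^p}{pB^{p-1}}\int|\nabla u|^p$, and $A^p/B^{p-1}<1$ in general (e.g.\ $d=1$, $J=\tfrac12 1_{[-1,1]}$, $p=3/2$). The reason is structural: after passing to the $(\xi,z)$ variables the optimal dual variable for $\tfrac{1}{p}\int J(z)|z\cdot\nabla u(\xi)|^p$ is proportional to $|z\cdot\nabla u(\xi)|^{p-2}\,z\cdot\nabla u(\xi)$, which is \emph{nonlinear} in $z$ and hence not of the separable form $\eta(\xi)\cdot z$ that your choice of $b$ enforces.

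The paper circumvents this by first invoking the compactness result \cite[Theorem~6.11]{AVMRT10}: bounded nonlocal energy forces $u\in W^{1,p}$ (resp.\ $BV$) and yields weak convergence of the weighted difference quotients in $L^p(\mcO\times\R^d)$ (resp.\ weakly as measures). Only then does it apply Young's inequality with a \emph{two-variable} test function $\eta(\xi,z)\in L^{p'}(\mcO\times\R^d)$, choosing the optimal non-separable $\eta(\xi,z)=(\tfrac{C_{J,p}}{2}J(z))^{1/p'}|z\cdot\nabla u|^{p-2}z\cdot\nabla u$, and closes with the identity $\int\tfrac{C_{J,p}}{2}J(z)|z\cdot\nabla u|^p\,dz=|\nabla u|^p$ from \cite[Lemma~6.16]{AVMRT10}. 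Your ``nonlocal integration by parts'' idea is attractive because it avoids the compactness step, but as written it cannot recover the sharp constant; to make it work you would need either a directional/slicing argument (as in Ponce) or, as the paper does, to allow the dual variable to depend on the direction $z$, which in turn requires knowing $\nabla u$ exists and hence the compactness result you were trying to avoid.
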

\begin{proof}
\textit{(i):}\textbf{ }For simplicity set $v^{n}:=u^{\ve_{n}}$ and $\vp^{n}:=\vp^{\ve_{n}}$. Clearly, $\sup_{n\in\N}\norm{v^{n}}_{L^{p}(\Ocal)}\le C$ for some constant $C>0$. Without loss of generality we may assume 
\[
\liminf_{n\to\infty}\vp^{n}(v^{n})<+\infty
\]
Suppose therefore, after extracting a subsequence if necessary (denoted by $v^{n}$, too), that 
\[
\liminf_{n\to\infty}\vp^{n}(v^{n})=\lim_{n\to\infty}\vp^{n}(v^{n}).
\]
In particular, 
\[
\sup_{n\in\N}\vp^{n}(v^{n})\le C
\]
for some constant $C>0$. We get that 
\[
\frac{C_{J,p}}{2}\int_{\Ocal}\int_{\Ocal}\eps_{n}^{-d}J\left(\frac{\xi-\zeta}{\eps_{n}}\right)\lrabs{v^{n}(\zeta)-v^{n}(\xi)}^{p}\, d\zeta\, d\xi\le Cp\eps_{n}^{p}.
\]
\emph{Case: $p\in(1,2)$}

By \cite[Theorem 6.11]{AVMRT10} it follows that $u\in W^{1,p}(\Ocal)$ and 
\[
\left(\frac{C_{J,p}}{2}J(z)\right)^{1/p}1_{\Ocal}(\xi+\eps_{n}z)\frac{\bar{v}^{n}(\xi+\eps_{n}z)-v^{n}(\xi)}{\eps_{n}}\rightharpoonup\left(\frac{C_{J,p}}{2}J(z)\right)^{1/p}z\cdot\nabla u(\xi)
\]
weakly in $L^{p}(\Ocal)\times L^{p}(\R^{d})$. 

Note that by variable substitution,
\begin{align*}
 & \frac{C_{J,p}}{2p}\int_{\Ocal}\int_{\R^{d}}J(z)1_{\Ocal}(\xi+\eps_{n}z)\lrabs{\frac{\bar{v}^{n}(\xi+\eps_{n}z)-v^{n}(\xi)}{\eps_{n}}}^{p}\, dz\, d\xi\\
 & =\frac{C_{J,p}}{2p\eps_{n}^{d}}\int_{\Ocal}\int_{\Ocal}J\left(\frac{\xi-\zeta}{\eps_{n}}\right)\lrabs{\frac{v^{n}(\zeta)-v^{n}(\xi)}{\eps_{n}}}^{p}\, d\zeta\, d\xi\\
 & =\vp^{n}(v^{n}).
\end{align*}
 Let $\eta(\xi,z)\in L^{p/(p-1)}(\Ocal)\times L^{p/(p-1)}(\R^{d})$ be a test-function. Then by Young's inequality
\begin{align*}
 & \int_{\Ocal}\int_{\R^{d}}\left(\frac{C_{J,p}}{2}J(z)\right)^{1/p}1_{\Ocal}(\xi+\eps_{n}z)\frac{\bar{v}^{n}(\xi+\eps_{n}z)-v^{n}(\xi)}{\eps_{n}}\,\eta(\xi,z)\, dz\, d\xi\\
 & \le\frac{C_{J,p}}{2p}\int_{\Ocal}\int_{\R^{d}}J(z)1_{\Ocal}(\xi+\eps_{n}z)\lrabs{\frac{\bar{v}^{n}(\xi+\eps_{n}z)-v^{n}(\xi)}{\eps_{n}}}^{p}\, dz\, d\xi\\
 & +\frac{p-1}{p}\int_{\Ocal}\int_{\R^{d}}|\eta(\xi,z)|^{p/(p-1)}\, dz\, d\xi.
\end{align*}
Upon taking the limit $n\to\infty$ we obtain that
\begin{align*}
 & \int_{\Ocal}\int_{\R^{d}}\left(\frac{C_{J,p}}{2}J(z)\right)^{1/p}z\cdot\nabla u(\xi)\,\eta(\xi,z)\, dz\, d\xi\\
 & \le\liminf_{n\to\infty}\vp^{n}(v^{n})+\frac{p-1}{p}\int_{\Ocal}\int_{\R^{d}}\abs{\eta(\xi,z)}^{p/(p-1)}\, dz\, d\xi.
\end{align*}
Choosing 
\[
\eta(\xi,z):=\left(\frac{C_{J,p}}{2}J(z)\right)^{(p-1)/p}\abs{z\cdot\nabla u(\xi)}^{p-2}z\cdot\nabla u(\xi),
\]
which is in $L^{p/(p-1)}(\Ocal)\times L^{p/(p-1)}(\R^{d})$ (recall that $J$ has compact support), yields
\begin{align*}
 & \int_{\Ocal}\int_{\R^{d}}\frac{C_{J,p}}{2}J(z)\abs{z\cdot\nabla u(\xi)}^{p}\, dz\, d\xi\\
 & \le\liminf_{n\to\infty}\vp^{n}(v^{n})+\frac{p-1}{p}\int_{\Ocal}\int_{\R^{d}}\frac{C_{J,p}}{2}J(z)\abs{z\cdot\nabla u(\xi)}^{p}\, dz\, d\xi.
\end{align*}
Hence, 
\[
\frac{1}{p}\int_{\Ocal}\int_{\R^{d}}\frac{C_{J,p}}{2}J(z)\abs{z\cdot\nabla u(\xi)}^{p}\, dz\, d\xi\le\liminf_{n\to\infty}\vp^{n}(v^{n})
\]
By \cite[Lemma 6.16]{AVMRT10}, 
\begin{align*}
 & \int_{\Ocal}\int_{\R^{d}}\frac{C_{J,p}}{2}J(z)\abs{z\cdot\nabla u(\xi)}^{p}\, dz\, d\xi\\
 & =\int_{\Ocal}\int_{\R^{d}}\sum_{i=1}^{d}\frac{C_{J,p}}{2}J(z)\abs{z\cdot\nabla u(\xi)}^{p-2}z\cdot\nabla u(\xi)z_{j}\partial_{j}u(\xi)\, dz\, d\xi\\
 & =\int_{\Ocal}\abs{\nabla u(\xi)}^{p}\, d\xi.
\end{align*}
Hence, we have proved that 
\[
\vp(u)\le\liminf_{n\to\infty}\vp^{n}(v^{n}).
\]
Since the above arguments work for any subsequence of $u^{\ve_{n}}$ this concludes the proof for $p\in(1,2)$.

\emph{Case: $p=1$}

By \cite[Theorem 6.11]{AVMRT10}, it follows that $u\in BV(\Ocal)$ and
\[
\frac{C_{J,1}}{2}J(z)1_{\Ocal}(\xi+\eps_{n}z)\frac{\bar{v}^{n}(\xi+\eps_{n}z)-v^{n}(\xi)}{\eps_{n}}\rightharpoonup\sum_{i=1}^{d}\frac{C_{J,p}}{2}J(z)z_{i}D_{i}u
\]
weakly in the sense of measures.

Let $\eta(\xi,z)\in C_{b}(\bar{\Ocal}\times\R^{d})$ be a test function. Then clearly
\begin{align*}
 & \int_{\Ocal}\int_{\R^{d}}\frac{C_{J,1}}{2}J(z)1_{\Ocal}(\xi+\eps_{n}z)\frac{\bar{v}^{n}(\xi+\eps_{n}z)-v^{n}(\xi)}{\eps_{n}}\,\eta(\xi,z)\, dz\, d\xi\\
 & \le\|\eta\|_{\infty}\frac{C_{J,1}}{2}\int_{\Ocal}\int_{\R^{d}}J(z)1_{\Ocal}(\xi+\eps_{n}z)\lrabs{\frac{\bar{v}^{n}(\xi+\eps_{n}z)-v^{n}(\xi)}{\eps_{n}}}\, dz\, d\xi\\
 & =\|\eta\|_{\infty}\vp^{n}(v^{n}).
\end{align*}
Upon taking the limit $n\to\infty$ we obtain that,
\begin{align*}
 & \sum_{i=1}^{d}\int_{\Ocal}\int_{\R^{d}}\frac{C_{J,1}}{2}J(z)z_{i}\eta(\xi,z)\, dz\, d[D_{i}u]\le\|\eta\|_{\infty}\liminf_{n\to\infty}\vp^{n}(v^{n}).
\end{align*}
Taking the supremum over all test functions of the form $\eta$ such that $\|\eta\|_{\infty}\le1$ yields by \cite[Proposition 1.47]{AFP00},
\[
\frac{C_{J,1}}{2}|\mu|(\Ocal\times\R^{d})\le\liminf_{n\to\infty}\vp^{n}(v^{n}),
\]
where $|\mu|(\Ocal\times\R^{d})$ denotes the total variation of the signed Radon measure $\mu(d\xi,dz)=\sum_{i=1}^{d}J(z)z_{i}dz\, d[D_{i}u]$.

Since by \cite[proof of Theorem 7.10, p. 174]{AVMRT10},
\[
\frac{C_{J,1}}{2}|\mu|(\Ocal\times\R^{d})=|Du|(\Ocal)=\|u\|_{TV},
\]
we get that
\[
\vp(u)\le\liminf_{n\to\infty}\vp^{n}(v_{n}).
\]
Since the arguments work for any subsequence this concludes the proof.

\textit{(ii):}\textbf{ }Taking \eqref{eq:radial_profile_calculation} into account, recall that 
\[
\frac{C_{J,p}K_{p,d}}{2}\int_{\R_{+}}J(r)r^{p+d-1}dr=1.
\]
\emph{Case: $p\in(1,2)$}

By\cite[Theorem 2', Corollary 4, D]{Brez2002} applied with $\g(r)=\frac{C_{J,p}K_{p,d}}{2}J(r)$, for $u\in W^{1,p}(\Ocal)$, we have that
\begin{align*}
 & \frac{C_{J,p}K_{p,d}}{2}\lim_{\ve\to0}\frac{1}{\ve^{d+p}}\int_{\mcO}\int_{\mcO}J\left(\frac{|\z-\xi|}{\ve}\right)|u(\z)-u(\xi)|^{p}d\z d\xi\\
 & =\lim_{\ve\to0}\frac{1}{\ve^{d+p}}\int_{\mcO}\int_{\mcO}\g\left(\frac{|\z-\xi|}{\ve}\right)|u(\z)-u(\xi)|^{p}d\z d\xi\\
 & =K_{p,d}\int_{\Ocal}\abs{\nabla u}^{p}\, d\xi.
\end{align*}
Hence, for $u\in W^{1,p}(\Ocal)$,
\begin{align*}
\lim_{\ve\to0}\vp^{\ve}(u) & =\lim_{\ve\to0}\frac{C_{J,p}}{2p\eps^{d}}\int_{\Ocal}\int_{\Ocal}J\left(\frac{\xi-\zeta}{\eps}\right)\lrabs{\frac{u(\zeta)-u(\xi)}{\eps}}^{p}\, d\zeta d\xi\\
 & =\frac{1}{p}\int_{\Ocal}\abs{\nabla u}^{p}\, d\xi\\
 & =\vp(u).
\end{align*}
\emph{Case: $p=1$}

Again, by \cite[Theorem 2', Corollary 4, D]{Brez2002}, for $u\in BV(\Ocal)$, we get that
\begin{align*}
 & \frac{C_{J,1}K_{1,d}}{2}\lim_{\ve\to0}\frac{1}{\ve^{d+1}}\int_{\mcO}\int_{\mcO}J\left(\frac{|\z-\xi|}{\ve}\right)|u(\z)-u(\xi)|d\z d\xi\\
 & =\lim_{\ve\to0}\frac{1}{\ve^{d+1}}\int_{\mcO}\int_{\mcO}\g\left(\frac{|\z-\xi|}{\ve}\right)|u(\z)-u(\xi)|d\z d\xi\\
 & =K_{1,d}|Du|(\Ocal).
\end{align*}
Hence, for $u\in BV(\Ocal)$,
\begin{align*}
\lim_{\ve\to0}\vp^{\ve}(u) & =\lim_{\ve\to0}\frac{C_{J,1}}{2\eps^{d}}\int_{\Ocal}\int_{\Ocal}J\left(\frac{\xi-\zeta}{\eps}\right)\lrabs{\frac{u(\zeta)-u(\xi)}{\eps}}\, d\zeta d\xi\\
 & =|Du|(\Ocal)\\
 & =\vp(u).
\end{align*}

\end{proof}

\section{Trotter type results \label{sec:trotter}}

\subsection{Stochastic $p$-Laplace equations\label{sec:trotter_plp}}

We consider stochastic singular $p$-Laplace evolution equations with zero Neumann boundary conditions
\begin{align}
dX_{t} & \in\div\phi\left(\nabla X_{t}\right)\, dt+B(X_{t})\, dW_{t},\nonumber \\
\phi(\nabla X_{t})\cdot\nu & \ni0\quad\mathrm{on}\;\partial\Ocal,\; t>0,\label{eq:singular_p_laplace}\\
X_{0} & =x_{0}\nonumber 
\end{align}
on bounded, convex, smooth domains $\mcO\subseteq\R^{d}$ , where $\nu$ denotes the outer normal on $\partial\Ocal$ and $\phi=\partial\psi$ is given as the subdifferential of a convex function $\psi:\R^{d}\to\R_{+}$ satisfying 
\begin{equation}
\psi(z)=\td\psi(|z|)\label{eq:plp-psi}
\end{equation}
for some convex, continuous, non-decreasing function $\td\psi$ and
\begin{equation}
\psi(z)\le C(1+|z|^{2})\quad\forall z\in\R^{d}.\label{eq:plp-growth}
\end{equation}
In particular, we are interested in singular $p$-Laplace equations, that is, $\phi(z)=|z|^{p-2}z$ with $p\in[1,2)$. Note that this includes the stochastic total variation flow for $p=1$. In the following let $H=L^{2}$, $S=H^{1}$ and $B,W$ be as in Section \ref{sec:generalities_SVI}. Further, let
\begin{equation}
\tilde{\vp}(u):=\begin{cases}
\int_{\Ocal}\psi(\nabla u)\, d\xi & \text{if }u\in H^{1}\\
+\infty & \text{if }u\in L^{2}\setminus H^{1}
\end{cases}\label{eq:trotter_plp_vp}
\end{equation}
and let $\vp$ be the l.s.c. hull of $\tilde{\vp}$ on $L^{2}$. We may then write \eqref{eq:singular_p_laplace} in its relaxed form
\begin{align}
dX_{t} & \in-\partial\vp(X_{t})dt+B(X_{t})dW_{t}.\label{eq:SFDE-1-1}
\end{align}

From \cite[Example 7.9]{GT11} we recall that there is a unique (limit) solution to \eqref{eq:singular_p_laplace}, which by a slight modification%
\footnote{In \cite[Appendix C]{GT11} SVI solutions are defined for the special choice $F=B(Z)$ in Definition \ref{def:SPDE_SVI} (cf.~also Remark \ref{rem:SVI_notions}). However, it is easy to see that the same arguments as in \cite[Appendix C]{GT11} can also be employed for general $F$, thus leading to an SVI solution in the sense of Definition \ref{def:SPDE_SVI}.%
} of \cite[Appendix C]{GT11} is also an SVI solution to \eqref{eq:singular_p_laplace}. 
\begin{thm}
\label{thm:trotter_plp}Let $\psi^{n}$ be a sequence of convex functions satisfying \eqref{eq:plp-psi} and \eqref{eq:plp-growth} with a constant $C>0$ independent of $n$. Suppose that $\psi^{n}\to\psi:=\frac{1}{p}|\cdot|^{p}$ in Mosco sense and 
\[
\limsup_{n\to\infty}\psi^{n}(z)\le\psi(z)\quad\forall z\in\R^{d}.
\]
Let $X^{n}$ be the unique (limit) solutions to \eqref{eq:singular_p_laplace} with $\psi$ replaced by $\psi^{n}$ and $X$ the unique SVI solution to \eqref{eq:singular_p_laplace} with $\psi$ as above. Then 
\[
X^{n}\rightharpoonup X\quad\text{in }L^{2}([0,T]\times\O;H)
\]
for $n\to\infty$.\end{thm}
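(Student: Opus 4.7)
The entire strategy is to realize this Trotter-type statement as a direct application of Theorem \ref{thm:SVI_stability} to the sequence of convex potentials $\varphi^n$ built from $\psi^n$ via \eqref{eq:trotter_plp_vp}. The $X^n$ are SVI solutions in the sense of Definition \ref{def:SPDE_SVI} (as indicated in the paragraph preceding the theorem), and by Remark \ref{rmk:varn_sol} the constant $C$ in \eqref{eq:SVI_energy}--\eqref{eq:SVI} depends only on $L$ and $\|B(0)\|_{L_2(U,H)}$, hence may be chosen independently of $n$. Thus, once random Mosco convergence $\varphi^n \to \varphi$ is established, Theorem \ref{thm:SVI_stability} furnishes a subsequence weakly converging to an SVI solution $X$ of \eqref{eq:SFDE-1-1}; uniqueness of SVI solutions from Theorem \ref{thm:SVI} then forces convergence of the whole sequence.

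The task therefore reduces to verifying random Mosco convergence $\varphi^n \to \varphi$, and the natural route is Proposition \ref{prop:random_Mosco}(ii) with $S = H^1$. Three items must be checked: (a) Mosco convergence $\varphi^n \to \varphi$ in $L^2$; (b) the uniform quadratic-in-$S$ bound \eqref{eq:bound_for_vp-2}; (c) the pointwise limsup $\limsup_n \varphi^n(u) \le \varphi(u)$ for all $u \in H^1$. Item (b) is immediate from \eqref{eq:plp-growth}, since for $u \in H^1$ one has
\[
\varphi^n(u) = \int_{\mathcal{O}} \psi^n(\nabla u)\,d\xi \le C\bigl(1 + \|\nabla u\|_{L^2}^2\bigr) \le C\bigl(1 + \|u\|_S^2\bigr),
\]
uniformly in $n$. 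Item (c) follows from the hypothesis $\limsup_n \psi^n(z) \le \psi(z)$ and the reverse Fatou lemma, using the integrable majorant $C(1 + |\nabla u|^2)$ supplied by \eqref{eq:plp-growth}; one also uses that, on the bounded domain $\mathcal{O}$, $H^1 \hookrightarrow W^{1,p}$ (resp.\ $H^1 \hookrightarrow BV$ for $p=1$), so that $\varphi(u) = \int_{\mathcal{O}} \psi(\nabla u)\,d\xi$ agrees with $\tilde\varphi(u)$ for such $u$. Item (a) is the Mosco convergence of the integral functionals; it is obtained from the assumed Mosco convergence $\psi^n \to \psi$ on $\mathbb{R}^d$ together with the radial/monotonicity structure \eqref{eq:plp-psi} and the uniform growth \eqref{eq:plp-growth}, by invoking the Mosco convergence results for integral functionals collected in Appendix \ref{sec:Random-Mosco-convergence}.

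With (a)--(c) in hand, Proposition \ref{prop:random_Mosco}(ii) delivers random Mosco convergence $\varphi^n \to \varphi$, Theorem \ref{thm:SVI_stability} yields weak subsequential convergence $X^{n_k} \rightharpoonup X$ in $L^2([0,T]\times\Omega;H)$ to an SVI solution of \eqref{eq:SFDE-1-1}, and uniqueness from Theorem \ref{thm:SVI} promotes this to convergence of the whole sequence. The only genuinely delicate step is (a): transferring Mosco convergence from the pointwise integrand $\psi^n$ to the integral functional $\varphi^n$ on $L^2$, particularly the $\liminf$ inequality along sequences $u^n \rightharpoonup u$ weakly in $L^2$ where no a priori control on $\nabla u^n$ in $L^p$ is available except through $\varphi^n(u^n)$ itself. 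Everything else is a routine bookkeeping consequence of the abstract framework of Section \ref{sec:generalities_SVI}.
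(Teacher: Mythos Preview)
Your overall strategy is correct and matches the paper's: reduce to Theorem \ref{thm:SVI_stability} via Proposition \ref{prop:random_Mosco}(ii), so that the only substantive task is Mosco convergence $\varphi^n\to\varphi$ in $L^2$. Items (b) and (c) are handled exactly as you say.

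The gap is in item (a). You propose to obtain Mosco convergence of $\varphi^n\to\varphi$ in $L^2(\mathcal{O})$ by ``invoking the Mosco convergence results for integral functionals collected in Appendix \ref{sec:Random-Mosco-convergence}''. But Theorem \ref{thm:random-Mosco} only lifts Mosco convergence of $\psi^n\to\psi$ on $\mathbb{R}^d$ to Mosco convergence of $v\mapsto\int_{\mathcal{O}}\psi^n(v)\,d\xi$ on $L^2(\mathcal{O};\mathbb{R}^d)$. The functionals $\varphi^n$ are compositions of this with the gradient, and the gradient is not continuous (indeed not even defined) from $L^2(\mathcal{O})$ to $L^2(\mathcal{O};\mathbb{R}^d)$. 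Along a sequence $u^n\rightharpoonup u$ weakly in $L^2$, no coercivity hypothesis on $\psi^n$ is available (only the upper bound \eqref{eq:plp-growth}), so boundedness of $\varphi^n(u^n)$ gives no a priori control on $\nabla u^n$, and the $\liminf$ inequality cannot be read off from Appendix \ref{sec:Random-Mosco-convergence}. You correctly flag this as the delicate point, but you do not supply an argument.

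The paper resolves this by a different route: Proposition \ref{prop:Mosco_plp_trotter} proves Mosco convergence in $L^2$ via Attouch's characterization through resolvent convergence. The key technical ingredient, absent from your proposal, is the uniform $H^1$-contraction estimate $\|R_1^n f\|_{H^1}\le\|f\|_{H^1}$, obtained by testing the (regularized) resolvent equation against $-\Delta z$ and exploiting the radial structure \eqref{eq:plp-psi} as in \eqref{eq:liu-ref}. This $H^1$ bound gives weak $H^1$ compactness of the resolvents, on which level Appendix \ref{sec:Random-Mosco-convergence} \emph{does} apply (since $\nabla:H^1\to L^2(\mathcal{O};\mathbb{R}^d)$ is bounded), and one then upgrades to strong $L^2$ convergence via the Kadets--Klee property. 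This is also precisely where the hypothesis \eqref{eq:plp-psi} enters, which your sketch does not use in any essential way.
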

\begin{proof}
Let $\tilde{\vp}^{n},\vp^{n}$ as in \eqref{eq:trotter_plp_vp} with $\psi$ replaced by $\psi^{n}$. By Proposition \ref{prop:Mosco_plp_trotter} below we know that $\vp^{n}\to\vp$ in Mosco sense and $\limsup_{n\to\infty}\vp^{n}(u)\le\vp(u)$. Hence, the proof follows from Proposition \ref{prop:random_Mosco} and Theorem \ref{thm:SVI_stability}.\end{proof}
\begin{prop}
\label{prop:Mosco_plp_trotter}Let $\psi^{n},\psi:\R^{d}\to\R_{+}$ be l.s.c., convex functions satisfying $\psi^{n}(0)=\psi(0)=0$ and \eqref{eq:plp-growth} for some constant $C>0$ independent of $n$. Suppose that $\psi^{n}\to\psi$ in Mosco sense and 
\begin{equation}
\limsup_{n\to\infty}\psi^{n}(z)\le\psi(z)\quad\forall z\in\R^{d}.\label{eq:ptw_ineq-2}
\end{equation}
Let $\tilde{\vp}_{n}$, $\tilde{\vp}$ be as in \eqref{eq:trotter_plp_vp} with l.s.c.~hull on $L^{2}$ denoted by $\vp^{n}$, $\vp$ respectively. Then $\vp^{n}\to\vp$ in Mosco sense in $L^{2}$ and 
\begin{equation}
\limsup_{n\to\infty}\vp^{n}(u)\le\vp(u)\quad\forall u\in L^{2}.\label{eq:ptw_ineq-1}
\end{equation}
\end{prop}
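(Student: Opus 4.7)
The plan is to establish separately the pointwise inequality \eqref{eq:ptw_ineq-1} and the two conditions of Mosco convergence $\vp^n\to\vp$ in $L^2$. The pointwise estimate will follow from a reverse Fatou argument combined with diagonalization, while the Mosco $\liminf$ will rely on the Mosco convergence result for integral functionals collected in Appendix \ref{sec:Random-Mosco-convergence}.

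\emph{Pointwise $\limsup$.} For $u\in H^1$, the integrands $\psi^n(\nabla u(\cdot))$ are dominated by the $L^1(\mcO)$ function $C(1+|\nabla u|^{2})$ uniformly in $n$ by \eqref{eq:plp-growth}. The reverse Fatou lemma combined with \eqref{eq:ptw_ineq-2} yields
\[
\limsup_{n\to\infty}\tilde{\vp}^n(u)\le\int_{\mcO}\limsup_{n\to\infty}\psi^n(\nabla u)\,d\xi\le\int_{\mcO}\psi(\nabla u)\,d\xi=\tilde{\vp}(u),
\]
and since $\vp^n\le\tilde\vp^n$ by definition of the l.s.c.\ envelope and $\vp=\tilde\vp$ on $H^1$, this gives $\limsup_n\vp^n(u)\le\vp(u)$ on $H^1$. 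For a generic $u\in L^2$ with $\vp(u)<\infty$, pick $u_k\in H^1$ with $u_k\to u$ in $L^2$ and $\tilde\vp(u_k)\to\vp(u)$ (possible by the definition of the l.s.c.\ envelope) and diagonalize to extract $n_k\nearrow\infty$ with $\tilde\vp^{n_k}(u_k)\le\tilde\vp(u_k)+1/k$. The characterization $\vp^n(v)=\inf\{\liminf_j\tilde\vp^n(v_j):v_j\to v\text{ in }L^2\}$ then produces $\limsup_n\vp^n(u)\le\vp(u)$. The case $\vp(u)=+\infty$ is trivial.

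\emph{Mosco convergence.} The recovery-sequence ($\limsup$) half is supplied by the diagonal sequence $(u_{k(n)})$ from the previous paragraph, which satisfies $u_{k(n)}\to u$ in $L^2$ together with $\limsup_n\vp^n(u_{k(n)})\le\limsup_n\tilde\vp^n(u_{k(n)})\le\vp(u)$. For the $\liminf$ half, let $u^n\rightharpoonup u$ in $L^2$ with $\liminf_n\vp^n(u^n)<\infty$ and extract a subsequence attaining the liminf. Using the l.s.c.\ envelope, pick $v^n\in H^1$ with $\|v^n-u^n\|_{L^2}\le 1/n$ and $\tilde\vp^n(v^n)\le\vp^n(u^n)+1/n$; then $v^n\rightharpoonup u$ in $L^2$ and $\sup_n\tilde\vp^n(v^n)<\infty$. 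The Mosco convergence result for integral functionals in Appendix \ref{sec:Random-Mosco-convergence}, applicable here because $\psi^n\to\psi$ in $\R^d$-Mosco sense with the radial convex structure \eqref{eq:plp-psi} and the uniform quadratic bound \eqref{eq:plp-growth}, yields $\liminf_n\tilde\vp^n(v^n)\ge\vp(u)$, and hence $\liminf_n\vp^n(u^n)\ge\vp(u)$.

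\emph{Main obstacle.} The delicate step is the $\liminf$ half of Mosco convergence: going from weak $L^2$ convergence of the auxiliary sequence $v^n$ to a lower bound on the gradient integrals $\int\psi^n(\nabla v^n)\,d\xi$ is not automatic, since the hypotheses supply only the quadratic upper bound on $\psi^n$ and not a matching coercive lower bound. The relaxation has to be controlled by exploiting the radial convex structure of $\psi^n$ together with finite-dimensional Mosco convergence, which is exactly what the appendix result is tailored to deliver. By comparison, the pointwise $\limsup$ and the recovery sequence are direct consequences of reverse Fatou and a standard diagonalization, and require no further structural input.
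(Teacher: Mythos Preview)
Your argument for the $\liminf$ half of Mosco convergence has a genuine gap, and the appendix result does not close it. Theorem~\ref{thm:random-Mosco} yields Mosco convergence of the integral functionals $w\mapsto\int_{\mcO}\psi^{n}(w(\xi))\,d\xi$ on $L^{2}(\mcO;\R^{d})$; to invoke its $\liminf$ part you would need $\nabla v^{n}\rightharpoonup\nabla u$ weakly in $L^{2}(\mcO;\R^{d})$. From $v^{n}\rightharpoonup u$ in $L^{2}(\mcO)$ together with $\sup_{n}\int_{\mcO}\psi^{n}(\nabla v^{n})\,d\xi<\infty$ you cannot extract any gradient compactness, because the hypotheses impose only the upper growth bound \eqref{eq:plp-growth} and no coercivity whatsoever on $\psi^{n}$. (For instance, $\psi^{n}(z)=n^{-1}|z|^{2}$, $\psi\equiv0$ satisfies every assumption of the proposition, and there $\tilde\vp^{n}$ is already l.s.c.\ on $L^{2}$ with $\vp^{n}(u)=+\infty$ for $u\notin H^{1}$ --- which incidentally also kills your diagonalisation argument for the pointwise $\limsup$ on $L^{2}\setminus H^{1}$.) Your appeal to the ``radial convex structure \eqref{eq:plp-psi}'' is doubly illegitimate: the proposition does not assume \eqref{eq:plp-psi}, and even granting it there is no mechanism in Appendix~\ref{sec:Random-Mosco-convergence} that manufactures gradient bounds from weak $L^{2}$ convergence of the functions.

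The paper takes a completely different route that sidesteps the missing coercivity. It proves Mosco convergence through the equivalence with strong resolvent convergence (Theorem~\ref{thm:Mosco-Theorem}). The substance of the proof is an a~priori $H^{1}$ bound on the resolvents: for $f\in H^{1}$ one shows $\|R_{1}^{n}f\|_{H^{1}}\le\|f\|_{H^{1}}$, obtained by regularising $\vp^{n}$ via a Moreau--Yosida/viscous approximation and using the structural inequality $(\div\phi^{\lambda}(\nabla z^{\lambda})+\lambda\Delta z^{\lambda},\Delta z^{\lambda})_{L^{2}}\le0$ (cf.\ \eqref{eq:liu-ref}). This $H^{1}$ bound is precisely the gradient compactness that your direct approach lacks; it allows one to pass to weak limits in $H^{1}$, where $\nabla z_{n}\rightharpoonup\nabla z^{*}$ in $L^{2}(\mcO;\R^{d})$, so that now Appendix~\ref{sec:Random-Mosco-convergence} legitimately applies to pass to the limit in the variational form of the resolvent equation. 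Strong $L^{2}$ convergence of the resolvents then follows from the Kadets--Klee property. The pointwise $\limsup$ for $u\in H^{1}$ is indeed just reverse Fatou, as you wrote; that is the only case needed for Proposition~\ref{prop:random_Mosco}~(ii) with $S=H^{1}$.
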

\begin{proof}
In the following let $R_{1}^{n}$ denote the resolvent corresponding to $\partial\vp^{n}$, that is, for $f\in L^{2}$, $z=R_{1}^{n}f$ is the unique solution to 
\begin{equation}
z+\partial\vp^{n}(z)\ni f.\label{eq:res_eqn-1}
\end{equation}
Equivalently,
\[
(f,v-z)_{L^{2}}+\vp^{n}(z)+\frac{{1}}{2}\|z\|_{L^{2}}^{2}\le\vp^{n}(v)+\frac{{1}}{2}\|v\|_{L^{2}}^{2}\quad\forall v\in L^{2}.
\]
Analogously let $R_{1}$ be the resolvent of $\partial\vp$. We prove convergence of the resolvents $R_{1}^{n}f$ to $R_{1}f$ for all $f\in L^{2}$, which by \cite[Theorem 3.66]{A84} implies the desired Mosco convergence of $\vp^{n}$ to $\vp$ . In order to prove convergence of the resolvents, in a first step we need to establish an $H^{1}$ bound.

\textit{Step 1: }In this step we prove that
\begin{equation}
\|R_{1}^{n}f\|_{H^{1}}\le\|f\|_{H^{1}}.\label{eq:l2_res_bound-1}
\end{equation}

In the following we consider $n\in\N$ fixed and suppress it in the notation. Let $f\in H^{1}$. We proceed by considering a non-degenerate, non-singular approximation of $\vp$, that is, we define 
\[
\vp^{\lambda}(u):=\begin{cases}
\int_{\Ocal}\psi^{\lambda}(\nabla u)+\frac{{\lambda}}{2}|\nabla u|^{2}\, d\xi & u\in H^{1}\\
+\infty & u\in L^{2}\setminus H^{1},
\end{cases}
\]
where $\psi^{\lambda}$ denotes the Moreau-Yosida approximation of $\psi$. Then $\vp^{\lambda}$ is easily seen to be l.s.c. on $L^{2}$. Moreover,
\[
\mcD(\partial\vp^{\lambda})=H_{N}^{2}:=\{v\in H^{2}\,:\,\nabla v\cdot\nu=0\text{ on }\partial\mcO\}
\]
with
\begin{equation}
-\partial\vp^{\lambda}(u)=\div\phi^{\lambda}(\nabla u)+\l\Delta u\quad\forall u\in H_{N}^{2},\label{eq:approx_char-1}
\end{equation}
where $\phi^{\lambda}:=(\psi^{\lambda})^{\prime}$. We now consider the resolvent equation corresponding to $\vp^{\l}$, that is, 
\[
z^{\lambda}-\div\phi^{\lambda}(\nabla z^{\lambda})-\lambda\Delta z^{\lambda}=z^{\lambda}+\partial\vp^{\l}(z^{\lambda})=f.
\]
In particular, we have $z^{\l}\in\mcD(\partial\vp)=H_{N}^{2}$ and $\div\phi^{\lambda}(\nabla z^{\lambda})+\lambda\Delta z^{\lambda}\in L^{2}$. Multiplying with $-\D z^{\lambda}$ and integrating yields
\begin{align*}
\|z^{\lambda}\|_{\dot{H}^{1}}^{2}+(\div\phi^{\lambda}(\nabla z^{\lambda})+\lambda\Delta z^{\lambda},\Delta z^{\lambda})_{L^{2}} & \le\|f\|_{\dot{H}^{1}}\|z^{\lambda}\|_{\dot{H}^{1}}.
\end{align*}
As in \eqref{eq:liu-ref} we observe that 
\begin{equation}
(\div\phi^{\lambda}(\nabla z^{\lambda})+\lambda\Delta z^{\lambda},\Delta z^{\lambda})_{L^{2}}\le0\label{eq:liu-ref-2}
\end{equation}
and hence
\begin{equation}
\|z^{\lambda}\|_{H^{1}}\le\|f\|_{H^{1}}.\label{eq:approx_l2-1}
\end{equation}
Let $z^{\ast}$ be a weak accumulation point of $z^{\lambda}$ in $H^{1}$. By Mosco convergence of integral functionals (cf.~Appendix \ref{sec:Random-Mosco-convergence}), we have $\vp^{\lambda}\to\vp$ in Mosco sense in $H^{1}$. Hence, for $v\in H^{1}$, we can pass to the limit in the resolvent equation for $\vp^{\l}$, that is, in 
\[
(f,v-z^{\lambda})_{L^{2}}+\vp^{\lambda}(z^{\lambda})+\frac{{1}}{2}\|z^{\lambda}\|_{L^{2}}^{2}\le\vp^{\lambda}(v)+\frac{{1}}{2}\|v\|_{L^{2}}^{2}
\]
and we get that by weak lower semi-continuity of the norm and Lebesgue's dominated convergence theorem 
\[
(f,v-z^{\ast})_{L^{2}}+\vp(z^{*})+\frac{{1}}{2}\|z^{*}\|_{L^{2}}^{2}\le\vp(v)+\frac{{1}}{2}\|v\|_{L^{2}}^{2},
\]
for all $v\in H^{1}$. Since $\vp$ is the l.s.c. hull of $\td\vp$ , for each $v\in L^{2}$ there is a sequence $v_{n}\in H^{1}$ such that $v_{n}\to v$ in $L^{2}$ and $\limsup_{n\to\infty}\td\vp(v_{n})\le\vp(v)$. Hence, for each $v\in L^{2}$ we obtain that
\[
(f,v-z^{\ast})_{L^{2}}+\vp(z^{*})+\frac{{1}}{2}\|z^{*}\|_{L^{2}}^{2}\le\vp(v)+\frac{{1}}{2}\|v\|_{L^{2}}^{2},
\]
and, hence, $z^{\ast}$ is the resolvent $R_{1}f$ of $\partial\vp$, that is, 
\[
z^{\ast}+\partial\vp(z^{\ast})\ni f.
\]
By \eqref{eq:approx_l2-1} we conclude
\[
\|z^{\ast}\|_{H^{1}}\le\|f\|_{H^{1}}.
\]

\textit{Step 2: }Now, let $f\in H^{1}$ and consider the sequence of resolvent $z_{n}=R_{1}^{n}f$, that is, 
\[
z_{n}+\partial\vp^{n}(z_{n})\ni f
\]
By step one we have that 
\[
\|z_{n}\|_{H^{1}}\le\|f\|_{H^{1}}.
\]
Let $z^{\ast}$ be a weak accumulation point of $z_{n}$ in $H^{1}$. By Mosco convergence of integral functionals (cf.~Appendix \ref{sec:Random-Mosco-convergence}) we have $\vp^{n}\to\vp$ in Mosco sense on $H^{1}$. Moreover, by reverse Fatou inequality, $\limsup_{n}\tilde{\vp}_{n}(v)\le\tilde{\vp}(v)$ pointwise in $H^{1}$. Hence, for $v\in H^{1}$, we can pass to the limit in
\begin{equation}
(f,v-z_{n})_{L^{2}}+\vp_{n}(z_{n})+\frac{{1}}{2}\|z_{n}\|_{L^{2}}^{2}\le\vp_{n}(v)+\frac{{1}}{2}\|v\|_{L^{2}}^{2},\label{eq:var_ineq_phi_n-1}
\end{equation}
to obtain
\[
(f,v-z^{\ast})_{L^{2}}+\vp(z^{\ast})+\frac{{1}}{2}\|z^{\ast}\|_{L^{2}}^{2}\le\vp(v)+\frac{{1}}{2}\|v\|_{L^{2}}^{2}.
\]
Since $\vp_{}$ is the l.s.c. hull of $\td\vp_{}$ , for each $v\in L^{2}$ there is a sequence $v_{m}\in H^{1}$ such that $v_{n}\to v$ in $L^{2}$ and $\limsup_{n\to\infty}\td\vp(v_{m})\le\vp(v)$. Therefore, we obtain 
\[
(f,v-z^{\ast})_{L^{2}}+\vp(z^{\ast})+\frac{{1}}{2}\|z^{\ast}\|_{L^{2}}^{2}\le\vp(v)+\frac{{1}}{2}\|v\|_{L^{2}}^{2},
\]
for all $v\in L^{2}$ or equivalently
\[
z^{\ast}+\partial\vp(z^{\ast})\ni f.
\]
Setting $v=z^{\ast}$ in \eqref{eq:var_ineq_phi_n-1}, yields $\limsup_{n}\|z_{n}\|_{L^{2}}\le\|z^{\ast}\|_{L^{2}}$ and hence by weak lower semi-continuity of the norm $\|z_{n}\|_{L^{2}}\to\|z^{\ast}\|_{L^{2}}$. By the Kadets-Klee property of Hilbert spaces, we deduce strong convergence $z_{n}\to z^{\ast}$ in $L^{2}$. In conclusion, for $f\in H^{1}$ we have shown
\[
R_{1}^{n}f\to R_{1}f\quad\text{in }L^{2}
\]
for $n\to\infty$. By density of the embedding $H^{1}\subset L^{2}$ and \cite[Theorem 3.62]{A84} this convergence holds for all $f\in L^{2}$. By \cite[Theorem 3.66]{A84} this implies Mosco convergence of $\vp^{n}$ to $\vp$. 

The inequality \eqref{eq:ptw_ineq-1} follows using \eqref{eq:ptw_ineq-2} and the reverse Fatou inequality.
\end{proof}
Specific examples of approximations $\psi^{n}$ of $\psi$ in Theorem \ref{thm:trotter_FDE} one may consider (note that pointwise convergence of $\psi^{n}$ to $\psi$ on $\R^{d}$ implies Mosco convergence, cf.~\cite[Example 5.13]{DM93})
\begin{example}
~
\begin{enumerate}
\item Convergence of powers: Let $p_{n}\in[1,2)$ be a sequence such that $p_{n}\to p_{0}$ for some $p_{0}\in[1,2)$ and set $\psi^{n}(\cdot):=\frac{1}{p_{n}}|\cdot|^{p_{n}}$.
\item Vanishing viscosity: Let $\psi^{n}(z)=\frac{1}{2n}|z|^{2}+\psi(z)$. 
\item Yosida-approximation: Let $\psi^{n}$ be the Moreau-Yosida approximation of $\psi(\cdot):=\frac{1}{p}|\cdot|^{p}$ for $p\in[1,2)$. 
\end{enumerate}
\end{example}

\subsection{Stochastic fast diffusion equations\label{sec:FDE-1}}

We consider stochastic generalized fast diffusion equations of the type
\begin{align}
dX_{t} & \in\D\phi(X_{t})dt+B(X_{t})dW_{t},\label{eq:SFDE}\\
X_{0} & =x_{0}\nonumber 
\end{align}
on bounded, smooth domains $\mcO\subseteq\R^{d}$ with zero Dirichlet boundary conditions, where $\phi=\partial\psi$ is given as the subdifferential of an even, convex, continuous function $\psi:\R\to\R_{+}$ satisfying
\begin{equation}
\psi(r)\le C(1+|r|^{2})\quad\forall r\in\R.\label{eq:FDE_growth_bound}
\end{equation}
In particular, we are interested in fast diffusion equations, i.e. $\psi(r)=\frac{1}{m+1}|r|^{m+1}$, $m\in[0,1]$. Note that this includes the multivalued case $m=0$. In this section we consider the stability of solutions to \eqref{eq:SFDE} with respect to $\phi$.

Let
\begin{equation}
\tilde{\vp}(u):=\begin{cases}
\int_{\Ocal}\psi(u)\, d\xi & \text{if }u\in L^{2}\\
+\infty & \text{if }u\in H^{-1}\setminus L^{2}
\end{cases}\label{eq:fde_functional}
\end{equation}
and $\vp$ be the l.s.c. hull of $\tilde{\vp}$ on $H^{-1}$. We may then write \eqref{eq:SFDE} in its relaxed form
\begin{align}
dX_{t} & \in-\partial\vp(X_{t})dt+B(X_{t})dW_{t}.\label{eq:SFDE-1}
\end{align}

In the following let $H=H^{-1},S=L^{2}(\mcO)$, where $H^{-1}$ is the dual of $H_{0}^{1}(\mcO)$. Further, let $B$, $W$ be as in Section \ref{sec:generalities_SVI}. By \cite[Example 7.3]{GT11}, for each $x_{0}\in L^{2}(\O,\mcF_{0};H)$ there is a unique (limit) solution to \eqref{eq:SFDE}. By a slight modification of \cite[Appendix C]{GT11} this solution is also a continuous SVI solution to \eqref{eq:SFDE}. We further note that by \cite{GR15} for $\psi(r)=\frac{1}{m+1}|r|^{m+1}$ with $m\in[0,1]$ there is a unique continuous SVI solution to \eqref{eq:SFDE}.
\begin{thm}
\label{thm:trotter_FDE}Let $\psi^{n}$ be a sequence of even, convex, continuous functions satisfying \eqref{eq:FDE_growth_bound} with a uniform $C>0$. Suppose that $\psi^{n}\to\psi(\cdot):=\frac{1}{m+1}|\cdot|^{m+1}$ for some $m\in[0,1]$ in Mosco sense and
\[
\limsup_{n\to\infty}\psi^{n}(r)\le\psi(r)\quad\forall r\in\R.
\]
Let $X^{n}$ be the unique (limit) solutions to \eqref{eq:SFDE} with $\psi$ replaced by \textup{$\psi^{n}$} and $X$ be the unique SVI solution to \eqref{eq:SFDE} with $\psi$ as above. Then 
\[
X^{n}\rightharpoonup X\quad\text{in }L^{2}([0,T]\times\O;H^{-1})
\]
for $n\to\infty$.\end{thm}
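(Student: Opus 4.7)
Following the blueprint of Theorem \ref{thm:trotter_plp}, I would set $\tilde{\vp}^{n}(u):=\int_{\mcO}\psi^{n}(u)\, d\xi$ on $L^{2}(\mcO)$ (extended by $+\infty$ on $H^{-1}\setminus L^{2}$) and denote by $\vp^{n}$ its lower-semicontinuous hull on $H^{-1}$. The goal is to verify the two hypotheses needed to apply Proposition \ref{prop:random_Mosco}(ii): $\vp^{n}\to\vp$ in Mosco sense on $H^{-1}$, and $\limsup_{n\to\infty}\vp^{n}(u)\le\vp(u)$ for every $u\in H^{-1}$. Once this is done, random Mosco convergence holds and Theorem \ref{thm:SVI_stability} delivers the claimed weak convergence of $X^{n}$ in $L^{2}([0,T]\times\O;H^{-1})$. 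The pointwise limsup is the easy half: for $u\in L^{2}$, the growth bound \eqref{eq:FDE_growth_bound} furnishes the integrable majorant $C(1+u^{2})$ for $\psi^{n}(u)$, so the reverse Fatou lemma combined with $\limsup_{n}\psi^{n}\le\psi$ on $\R$ yields $\limsup_{n}\tilde{\vp}^{n}(u)\le\tilde{\vp}(u)$, and this transfers to the l.s.c.~hulls by the same approximation argument used at the end of the proof of Proposition \ref{prop:Mosco_plp_trotter}.

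The substantial task is the Mosco convergence $\vp^{n}\to\vp$ on $H^{-1}$, which I would establish by imitating Proposition \ref{prop:Mosco_plp_trotter} at the level of $H^{-1}$-resolvents. By \cite[Theorem 3.66]{A84} it suffices to prove $R_{1}^{n}f\to R_{1}f$ in $H^{-1}$ on a dense family of $f$'s, where $R_{1}^{n}=(I+\partial_{H^{-1}}\vp^{n})^{-1}$. For $f\in L^{2}$ I would use the non-degenerate, non-singular approximation
\[
\vp^{n,\lambda}(u):=\int_{\mcO}\psi^{n,\lambda}(u)+\tfrac{\lambda}{2}u^{2}\, d\xi,\qquad u\in L^{2},
\]
where $\psi^{n,\lambda}$ denotes the Moreau--Yosida approximation of $\psi^{n}$. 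The associated $H^{-1}$-resolvent equation $z^{\lambda}-\Delta\bigl(\phi^{n,\lambda}(z^{\lambda})+\lambda z^{\lambda}\bigr)=f$ (with zero Dirichlet boundary conditions) tested with $z^{\lambda}$ in $L^{2}$ yields, upon integration by parts,
\[
\|z^{\lambda}\|_{L^{2}}^{2}+\int_{\mcO}\bigl((\phi^{n,\lambda})'(z^{\lambda})+\lambda\bigr)|\nabla z^{\lambda}|^{2}\, d\xi=(f,z^{\lambda})_{L^{2}},
\]
hence $\|z^{\lambda}\|_{L^{2}}\le\|f\|_{L^{2}}$. Sending $\lambda\to 0$ (Moreau--Yosida and viscous approximation) preserves the bound, giving the $H^{-1}$ analogue $\|R_{1}^{n}f\|_{L^{2}}\le\|f\|_{L^{2}}$ of the $H^{1}$ resolvent estimate \eqref{eq:l2_res_bound-1} of the $p$-Laplace case.

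With this uniform $L^{2}$ control, $z_{n}:=R_{1}^{n}f$ admits a subsequence weakly convergent in $L^{2}$, hence in $H^{-1}$, to some $z^{\ast}$. Starting from the variational characterization
\[
(f,v-z_{n})_{H^{-1}}+\vp^{n}(z_{n})+\tfrac{1}{2}\|z_{n}\|_{H^{-1}}^{2}\le\vp^{n}(v)+\tfrac{1}{2}\|v\|_{H^{-1}}^{2},
\]
I would take $v\in L^{2}$ and pass to the limit using Mosco convergence of the integral functionals $\tilde{\vp}^{n}\to\tilde{\vp}$ on $L^{2}$ from Appendix \ref{sec:Random-Mosco-convergence} (which rests on $\psi^{n}\to\psi$ Mosco on $\R$) for the liminf, together with the reverse Fatou bound on the right. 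The recovery sequence implicit in the l.s.c.~hull construction of $\vp$ then extends the inequality to arbitrary $v\in H^{-1}$, identifying $z^{\ast}=R_{1}f$. Taking $v=z^{\ast}$ gives $\limsup_{n}\|z_{n}\|_{H^{-1}}\le\|z^{\ast}\|_{H^{-1}}$, whence the Kadets--Klee property upgrades weak $H^{-1}$-convergence to strong, and density of $L^{2}$ in $H^{-1}$ together with resolvent nonexpansiveness extends strong resolvent convergence to all $f\in H^{-1}$. I expect the main obstacle to lie precisely in this last stage: transferring the resolvent variational inequality from $L^{2}$-test-functions to $H^{-1}$-test-functions must be carried out carefully through the l.s.c.~hull recovery sequence in the weaker topology of $H^{-1}$, whereas in the $p$-Laplace setting the analogous step could rely on the densely-defined structure of $\tilde{\vp}$ on the stronger space $L^{2}$.
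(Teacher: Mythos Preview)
Your proposal is correct and follows essentially the same route as the paper: reduce to Proposition~\ref{prop:random_Mosco}(ii) and Theorem~\ref{thm:SVI_stability}, and establish Mosco convergence of $\vp^{n}\to\vp$ on $H^{-1}$ via strong resolvent convergence, obtaining the key uniform $L^{2}$ bound on $R_{1}^{n}f$ from the approximated resolvent equation tested with $z^{\lambda}$ (the paper phrases this as $(\Delta(\phi^{\lambda}(z^{\lambda})+\lambda z^{\lambda}),z^{\lambda})_{L^{2}}\le0$, which is exactly your integrated-by-parts identity). The obstacle you flag in the last paragraph is not genuinely new: the passage from $L^{2}$-test-functions to $H^{-1}$-test-functions via the l.s.c.~hull recovery sequence is structurally identical to the $H^{1}\to L^{2}$ passage in Proposition~\ref{prop:Mosco_plp_trotter}, under the dictionary $L^{2}\leadsto H^{-1}$, $H^{1}\leadsto L^{2}$, $H_{N}^{2}\leadsto H_{0}^{1}$ that the paper itself invokes.
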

\begin{proof}
We aim to apply Proposition \ref{prop:random_Mosco} and Theorem \ref{thm:SVI_stability}. Let $\tilde{\vp}^{n}$, $\tilde{\vp}$ be as in \eqref{eq:fde_functional} with l.s.c. hull on $H^{-1}$ denoted by $\vp^{n}$, $\vp$ respectively. We need to prove that $\vp^{n}\to\vp$ in Mosco sense and $ $$\limsup_{n\to\infty}\vp^{n}(v)\le\vp(v)$ for all $v\in S$. Indeed, this holds by Proposition \ref{prop:Mosco_FDE_Trotter} below, which finishes the proof.\end{proof}
\begin{prop}
\label{prop:Mosco_FDE_Trotter}Let $\psi^{n},\psi:\R\to\R_{+}$ be l.s.c., convex functions satisfying $\psi^{n}(0)=\psi(0)=0$ and \eqref{eq:FDE_growth_bound} for some constant $C>0$ independent of $n$. Suppose that $\psi^{n}\to\psi$ in Mosco sense and 
\[
\limsup_{n\to\infty}\psi^{n}(r)\le\psi(r)\quad\forall r\in\R.
\]
Let $\tilde{\vp}^{n}$, $\tilde{\vp}$ be as in \eqref{eq:fde_functional} with l.s.c. hull on $H^{-1}$ denoted by $\vp^{n}$, $\vp$ respectively. Then $\vp^{n}\to\vp$ in Mosco sense in $H^{-1}$ and 
\begin{equation}
\limsup_{n\to\infty}\vp^{n}(u)\le\vp(u)\quad\forall u\in H^{-1}.\label{eq:ptw_ineq}
\end{equation}
\end{prop}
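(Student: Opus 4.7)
The strategy mirrors that of Proposition \ref{prop:Mosco_plp_trotter}, but with the roles of the ambient Hilbert space $L^{2}$ and the auxiliary regularity space $H^{1}$ now played by $H^{-1}$ and $L^{2}$ respectively. By \cite[Theorems 3.62 and 3.66]{A84}, it suffices to establish strong convergence of the resolvents $R_{1}^{n}:=(\mathrm{Id}+\partial_{H^{-1}}\vp^{n})^{-1}$ to $R_{1}$ on a dense subset of $H^{-1}$, for which $L^{2}\subset H^{-1}$ is the natural choice.

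\emph{Step 1 (uniform $L^{2}$-estimate on $R_{1}^{n}f$).} Fix $n$, drop it from the notation, and introduce the non-degenerate regularization $\vp^{\lambda}(u):=\int_{\mcO}(\psi^{\lambda}(u)+\tfrac{\lambda}{2}u^{2})\,d\xi$ where $\psi^{\lambda}$ is the Moreau-Yosida approximation of $\psi$. For $f\in L^{2}$, the corresponding resolvent $z^{\lambda}$ solves
\[
z^{\lambda}-\Delta\bigl(\phi^{\lambda}(z^{\lambda})+\lambda z^{\lambda}\bigr)=f,
\]
with $\phi^{\lambda}(z^{\lambda})+\lambda z^{\lambda}\in H_{0}^{1}(\mcO)$; since the map $\phi^{\lambda}+\lambda\,\mathrm{id}$ is bi-Lipschitz with $(\phi^{\lambda}+\lambda\,\mathrm{id})^{-1}(0)=0$, one further obtains $z^{\lambda}\in H_{0}^{1}(\mcO)$. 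Testing in $L^{2}$ with $z^{\lambda}$, integrating by parts, and discarding the non-negative contribution $\int_{\mcO}((\phi^{\lambda})'(z^{\lambda})+\lambda)|\nabla z^{\lambda}|^{2}\,d\xi$ yields $\|z^{\lambda}\|_{L^{2}}\le\|f\|_{L^{2}}$. Passing to the limit $\lambda\to0$ through the resolvent variational inequality, together with Mosco convergence $\vp^{\lambda}\to\vp^{n}$ on $H^{-1}$ (a consequence of the pointwise monotone convergence $\psi^{\lambda}\nearrow\psi^{n}$ and the Appendix on integral functionals), identifies the weak-$L^{2}$ limit with $R_{1}^{n}f$ and preserves $\|R_{1}^{n}f\|_{L^{2}}\le\|f\|_{L^{2}}$, uniformly in $n$.

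\emph{Step 2 (passage $n\to\infty$).} For $f\in L^{2}$, set $z_{n}:=R_{1}^{n}f$. Step 1 and the compact embedding $L^{2}\hookrightarrow H^{-1}$ (dual of Rellich-Kondrachov) yield, along a subsequence, $z_{n}\to z^{\ast}$ strongly in $H^{-1}$ and $z_{n}\rightharpoonup z^{\ast}$ weakly in $L^{2}$. Passing to the limit in
\[
(f,v-z_{n})_{H^{-1}}+\vp^{n}(z_{n})+\tfrac{1}{2}\|z_{n}\|_{H^{-1}}^{2}\le\vp^{n}(v)+\tfrac{1}{2}\|v\|_{H^{-1}}^{2},\qquad v\in L^{2},
\]
rests on two ingredients: (a) reverse Fatou applied with the pointwise bound $\limsup_{n}\psi^{n}\le\psi$ and the quadratic dominator $C(1+|v|^{2})\in L^{1}$ gives $\limsup_{n}\vp^{n}(v)\le\tilde{\vp}(v)$ for $v\in L^{2}$; (b) Mosco convergence of integral functionals $\tilde{\vp}^{n}\to\tilde{\vp}$ on $L^{2}$ (Appendix), together with the identification $\vp^{n}=\tilde{\vp}^{n}$ on $L^{2}$, yields $\liminf_{n}\vp^{n}(z_{n})\ge\vp(z^{\ast})$. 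Exactly as in the last part of Step 2 of the proof of Proposition \ref{prop:Mosco_plp_trotter}, the density of $L^{2}$ in $H^{-1}$ and the l.s.c.\ hull definition of $\vp$ relax the test class from $L^{2}$ to $H^{-1}$ in the limit, whence $z^{\ast}=R_{1}f$.

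\emph{Conclusion and main obstacle.} The pointwise upper bound \eqref{eq:ptw_ineq} follows exactly as in the final line of Proposition \ref{prop:Mosco_plp_trotter}, via reverse Fatou applied with the quadratic domination and the hypothesis $\limsup_{n}\psi^{n}\le\psi$. The principal technical subtlety lies in ingredient (b) of Step 2, namely the identification $\vp^{n}=\tilde{\vp}^{n}$ on $L^{2}$: one must verify that the $H^{-1}$-lower semicontinuous hull of the $L^{2}$-continuous convex integral $\tilde{\vp}^{n}$ does not drop its pointwise values on $L^{2}$. This is exactly the step where the $L^{2}$-bound from Step 1 and the compactness $L^{2}\hookrightarrow H^{-1}$ are both essential, and where one invokes Ioffe-type lower semicontinuity of convex integrands under weak $L^{2}$ convergence, to rule out any loss of $\tilde{\vp}^{n}$-energy along sequences that are strongly $H^{-1}$-convergent but only weakly $L^{2}$-convergent.
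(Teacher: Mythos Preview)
Your proposal is essentially correct and follows the same route as the paper: the paper's proof simply instructs one to repeat Proposition \ref{prop:Mosco_plp_trotter} with the substitutions $L^{2}\leadsto H^{-1}$, $H^{1}\leadsto L^{2}$, $H_{N}^{2}\leadsto H_{0}^{1}$, the sole modification being that the $L^{2}$-bound on $z^{\lambda}$ comes from the elementary observation $(\Delta(\phi^{\lambda}(z^{\lambda})+\lambda z^{\lambda}),z^{\lambda})_{L^{2}}\le0$. Your integration-by-parts argument in Step~1 is exactly this inequality unpacked.

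There is one small but genuine difference in Step~2: the paper (following Proposition \ref{prop:Mosco_plp_trotter}) upgrades weak to strong convergence in $H^{-1}$ via the Kadets--Klee property, by testing the resolvent inequality at $v=z^{\ast}$ to obtain $\limsup_{n}\|z_{n}\|_{H^{-1}}\le\|z^{\ast}\|_{H^{-1}}$. You instead invoke the compact embedding $L^{2}\hookrightarrow H^{-1}$. Both are valid; your route is arguably more direct here.

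Your ``main obstacle'' is overstated. The identification $\vp^{n}(z_{n})=\tilde{\vp}^{n}(z_{n})$ is not a separate technical hurdle: it is a byproduct of Step~1. Indeed, when you pass $\lambda\to0$ in the resolvent inequality for $\vp^{\lambda}$, the liminf on the left-hand side produces $\tilde{\vp}^{n}(z_{n})$ (not merely $\vp^{n}(z_{n})$), since $\vp^{\lambda}$ is an explicit integral functional and the appendix gives $\liminf_{\lambda}\vp^{\lambda}(z^{\lambda})\ge\tilde{\vp}^{n}(z_{n})$ along weak $L^{2}$-limits. Testing this stronger limit inequality at $v=z_{n}$ immediately forces $\tilde{\vp}^{n}(z_{n})\le\vp^{n}(z_{n})$, hence equality. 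So in Step~2 you may work with $\tilde{\vp}^{n}(z_{n})$ on the left-hand side from the outset, and your ingredient (b) reduces to the Mosco convergence of integral functionals on $L^{2}$ already provided by the appendix; no separate Ioffe-type argument is needed.
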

\begin{proof}
The proof follows along the same lines as Proposition \ref{prop:Mosco_plp_trotter}, replacing $L^{2}$ by $H^{-1}$, $H^{1}$ by $L^{2}$ and $H_{N}^{2}$ by $H_{0}^{1}$. The only difference appears in the derivation of the $L^{2}$ bound of $z^{\l}$, where instead of \eqref{eq:liu-ref-2} the elementary observation
\[
(\Delta(\phi^{\lambda}(z^{\lambda})+\lambda z^{\lambda}),z^{\lambda})_{L^{2}}\le0
\]
for $z^{\lambda}\in H_{0}^{1}$ and $\Delta(\phi^{\lambda}(z^{\lambda})+\lambda z^{\lambda})\in L^{2}$ is used. The details are left to the reader.
\end{proof}
As in Section \ref{sec:trotter_plp} As specific examples of approximations $\psi^{n}$ of $\psi$ in Theorem \ref{thm:trotter_FDE} one may consider
\begin{example}
~
\begin{enumerate}
\item Convergence of powers: Let $m_{n}\in[0,1]$ be a sequence such that $m_{n}\to m_{0}$ for some $m_{0}\in[0,1]$ and set $\psi^{n}(\cdot):=\frac{1}{m_{n}+1}|\cdot|^{m_{n}+1}$.
\item Vanishing viscosity: Let $\psi^{n}(r)=\frac{1}{2n}r^{2}+\psi(r)$. 
\item Yosida-approximation: Let $\psi^{n}$ be the Moreau-Yosida approximation of $\psi(\cdot):=\frac{1}{m+1}|\cdot|^{m+1}$. 
\end{enumerate}
\end{example}

\section{Homogenization\label{sec:homo}}

\subsection{Stochastic $p$-Laplace equations}

We consider the periodic homogenization problem for stochastic $p$-Laplace equations of the type 
\begin{align}
dX_{t} & =\div\left(a\left(\frac{\xi}{\ve}\right)|\nabla X_{t}|^{p-2}\nabla X_{t}\right)\, dt+B(X_{t})\, dW_{t},\nonumber \\
|\nabla X_{t}|^{p-2}\nabla X_{t}\cdot\nu & =0\quad\mathrm{on}\;\partial\Ocal,\; t>0,\label{eq:singular_p_laplace-homo}\\
X_{0} & =x_{0},\nonumber 
\end{align}
where $p\in(1,2)$ and $a\in L^{\infty}(\R^{d})$ is periodic on a cube $Y:=\prod_{i=1}^{d}[l_{i},r_{i})$, $l_{i}<r_{i}$, $1\le i\le d$ and $a\ge\rho>0$ for some constant $\rho>0.$ We note that the results from \cite{C11-1} applied to \eqref{eq:singular_p_laplace-homo} require, in addition, that $B=(-\D)^{-\s}$ for some $\s>0$ constant, $a\in C^{2}(Y)$ and $p=2$. We do not require these additional assumptions. We show that the solutions $X^{\ve}$ to \eqref{eq:singular_p_laplace-homo} converge to the homogenized limit
\begin{align}
dX_{t} & =M_{Y}(a)\div\left(|\nabla X_{t}|^{p-2}\nabla X_{t}\right)\, dt+B(X_{t})\, dW_{t},\nonumber \\
|\nabla X_{t}|^{p-2}\nabla X_{t}\cdot\nu & =0\quad\mathrm{on}\;\partial\Ocal,\; t>0,\label{eq:singular_p_laplace-homo-1}\\
X_{0} & =x_{0},\nonumber 
\end{align}
where
\[
M_{Y}(a):=\frac{1}{|Y|}\int_{Y}a(\xi)\, d\xi.
\]

For $u\in H:=L^{2}(\mcO)$ let
\[
\vp^{\ve}(u):=\begin{cases}
\frac{{1}}{p}\int_{\Ocal}a\left(\frac{{\xi}}{\eps}\right)|\nabla u(\xi)|^{p}\, d\xi & \quad\text{if }u\in W^{1,p}(\mcO)\\
+\infty & \quad\text{otherwise.}
\end{cases}
\]
and
\[
\vp(u):=\begin{cases}
\frac{M_{Y}(a)}{p}\int_{\Ocal}|\nabla u(\xi)|^{p}\, d\xi & \quad\text{if }u\in W^{1,p}(\mcO)\\
+\infty & \quad\text{otherwise.}
\end{cases}
\]
By \cite{RRW07}, for each $\ve>0$ there is a unique variational solution $X^{\ve}$ to \eqref{eq:singular_p_laplace-homo}, which as in Remark \ref{rmk:varn_sol} is easily seen to be a time-continuous SVI solution to \eqref{eq:singular_p_laplace-homo} with $H=L^{2}(\mcO)$, $S=H^{1}(\mcO)$. By Section \ref{sec:plp} there is a unique time-continuous SVI solution to \eqref{eq:singular_p_laplace-homo-1} with $H,S$ as before.
\begin{thm}
Let $x_{0}\in L^{2}(\O,\mcF_{0};H)$ and let $X^{\ve}$, $X$ be the solutions to \eqref{eq:singular_p_laplace-homo}, \eqref{eq:singular_p_laplace-homo-1} respectively. Then 
\[
X^{\ve}\rightharpoonup X\quad\text{in }L^{2}([0,T]\times\O;H)
\]
for $\ve\to0$.\end{thm}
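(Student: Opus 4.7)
The plan is to apply the abstract stability result Theorem~\ref{thm:SVI_stability} to the sequence $\{\vp^{\ve}\}$ and the candidate limit $\vp$, with $H=L^{2}(\mcO)$ and $S=H^{1}(\mcO)$. By \cite{RRW07} each $X^{\ve}$ is a variational solution to the $\ve$-problem, hence (cf.~Remark~\ref{rmk:varn_sol}) a time-continuous SVI solution; since $\rho\le a\le\|a\|_{L^{\infty}}$ uniformly, the constants appearing in \eqref{eq:SVI_energy} and \eqref{eq:SVI} depend only on the Lipschitz data of $B$ and are in particular independent of $\ve$. The SVI solution $X$ to the homogenized equation is supplied by Section~\ref{sec:plp}. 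What remains is to verify that $\vp^{\ve}\to\vp$ in the random Mosco sense of Definition~\ref{def:random_Mosco}.

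For this I would invoke Proposition~\ref{prop:random_Mosco}(ii). Condition \eqref{eq:bound_for_vp-2} is immediate from the pointwise inequality $\vp^{\ve}(u)\le(\|a\|_{L^{\infty}}/M_{Y}(a))\,\vp(u)$ combined with the continuous embedding $H^{1}(\mcO)\hookrightarrow W^{1,p}(\mcO)$ (valid on the bounded domain $\mcO$ since $p<2$) and the elementary estimate $\|\nabla u\|_{L^{p}}^{p}\lesssim 1+\|u\|_{H^{1}}^{2}$. The pointwise limsup condition $\limsup_{\ve\to0}\vp^{\ve}(u)\le\vp(u)$ for $u\in H^{1}$ is in fact an equality: since $a(\cdot/\ve)\to M_{Y}(a)$ weak-$*$ in $L^{\infty}(\mcO)$ by classical periodic averaging, testing against the fixed integrable function $|\nabla u|^{p}\in L^{1}(\mcO)$ gives $\vp^{\ve}(u)\to\vp(u)$.

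The principal obstacle is therefore the underlying Mosco convergence $\vp^{\ve}\to\vp$ in $L^{2}(\mcO)$, which is the remaining hypothesis of Proposition~\ref{prop:random_Mosco}. The limsup part of Mosco is handled by the constant recovery sequence $u^{\ve}\equiv u$ for $u\in H^{1}$, and extended to general $u\in L^{2}$ by a diagonal argument exploiting that $\vp$ is the lower semicontinuous hull on $L^{2}$ of its restriction to $H^{1}$. The liminf inequality is the analytic core: given $u^{\ve}\rightharpoonup u$ in $L^{2}$ with $\liminf_{\ve}\vp^{\ve}(u^{\ve})<\infty$, the uniform coercivity $\vp^{\ve}(w)\ge(\rho/p)\|\nabla w\|_{L^{p}}^{p}$ produces a uniform $W^{1,p}(\mcO)$ bound and, after passing to a subsequence, $\nabla u^{\ve}\rightharpoonup\nabla u$ weakly in $L^{p}(\mcO)$; identifying the $\Gamma$-limit with $(M_{Y}(a)/p)\int_{\mcO}|\nabla u|^{p}\,d\xi$ then reduces to the classical homogenization of periodic convex integral functionals and is treated by the Marcellini--Sbordone / two-scale machinery (cf.~\cite{DM93} and the references therein). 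Once Mosco and hence random Mosco convergence is in hand, Theorem~\ref{thm:SVI_stability} yields a subsequential weak limit $X^{\ve_{k}}\rightharpoonup X^{\ast}$ in $L^{2}([0,T]\times\Omega;H)$ that is an SVI solution of the homogenized equation, and the uniqueness part of Theorem~\ref{thm:SVI} identifies $X^{\ast}$ with $X$ and upgrades subsequential to full convergence.
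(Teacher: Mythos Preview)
Your overall strategy coincides with the paper's: verify the hypotheses of Proposition~\ref{prop:random_Mosco}(ii) (Mosco convergence in $H$ together with the pointwise $\limsup$ bound and the uniform domination \eqref{eq:bound_for_vp-2}), then invoke Theorem~\ref{thm:SVI_stability} and the uniqueness from Section~\ref{sec:plp}. Your treatment of the uniform SVI constants, the bound \eqref{eq:bound_for_vp-2}, and the pointwise limit via weak-$*$ convergence of $a(\cdot/\ve)$ matches the paper exactly.

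The one substantive difference is the source of the Mosco liminf inequality. The paper does not appeal to the classical periodic homogenization literature; instead it supplies a short self-contained argument (Theorem~\ref{thm:mosco_homo_plp}): from $\nabla u^{\ve}\rightharpoonup\nabla u$ in $L^{p}$ one writes, for any test field $\eta\in L^{q}$, the Young inequality $\int\nabla u^{\ve}\cdot\eta\le\vp^{\ve}(u^{\ve})+\tfrac{1}{q}\int a(\xi/\ve)^{1-q}|\eta|^{q}$, passes to the limit using weak-$*$ convergence of $a(\cdot/\ve)^{1-q}$, and then optimizes in $\eta$. Your route via ``Marcellini--Sbordone / two-scale'' machinery is not quite a drop-in replacement: the classical $\Gamma$-limit of $\int a(\xi/\ve)|\nabla u|^{p}$ is the cell formula $\int f_{\mathrm{hom}}(\nabla u)$ with $f_{\mathrm{hom}}(z)=\inf_{w\in W^{1,p}_{\#}(Y)}|Y|^{-1}\int_{Y}a(y)|z+\nabla w|^{p}\,dy$, which is in general \emph{not} equal to $M_{Y}(a)|z|^{p}$. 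So if you go that way you would still owe an identification of the cell formula with the explicit average, whereas the paper's duality argument aims directly at the stated limit $\vp^{\mathrm{hom}}$.
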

\begin{proof}
The proof follows immediately from Theorem \ref{thm:mosco_homo_plp} below, Proposition \ref{prop:random_Mosco} and Theorem \ref{thm:SVI_stability}.\end{proof}
\begin{thm}
\label{thm:mosco_homo_plp}For $\eps\searrow0$ we have that $\vp^{\eps}\to\vp^{\operatorname{hom}}$ in Mosco sense in $L^{p}(\Ocal)$. Furthermore, for all $u\in L^{p}(\Ocal)$, we have that
\[
\limsup_{\ve\to0}\vp^{\ve}(u)\le\vp^{\operatorname{hom}}(u).
\]
\end{thm}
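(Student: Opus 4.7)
The plan is to verify the two defining conditions of Mosco convergence in $L^p(\mcO)$---existence of a strong recovery sequence and the weak $\Gamma$-liminf inequality---while extracting the pointwise $\limsup$ claim as a byproduct of a trivial recovery sequence.

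For the pointwise limit and the recovery sequence, I fix $u \in W^{1,p}(\mcO)$ and take $u^\ve := u$. Classical Riemann--Lebesgue averaging for periodic $L^\infty$-functions gives the weak$^*$ convergence $a(\cdot/\ve) \rightharpoonup^{\ast} M_Y(a)$ in $L^\infty(\R^d)$; tested against $|\nabla u|^p \in L^1(\mcO)$, this yields
\[
\vp^{\ve}(u) = \frac{1}{p}\int_{\mcO} a(\xi/\ve)|\nabla u|^p \, d\xi \longrightarrow \frac{M_Y(a)}{p}\int_{\mcO} |\nabla u|^p \, d\xi = \vp^{\operatorname{hom}}(u).
\]
For $u \in L^p(\mcO)\setminus W^{1,p}(\mcO)$ both sides equal $+\infty$. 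This establishes the pointwise $\limsup$ claim and supplies the trivial Mosco recovery sequence.

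For the weak $\Gamma$-liminf inequality, I take $u^{\ve} \rightharpoonup u$ in $L^p(\mcO)$ along a subsequence achieving $\liminf \vp^{\ve}(u^{\ve}) < \infty$. The coercivity $a \ge \rho > 0$ gives $\|\nabla u^{\ve}\|_{L^p}^p \le (p/\rho)\,\vp^{\ve}(u^{\ve})$, so $\{u^{\ve}\}$ is uniformly bounded in $W^{1,p}(\mcO)$; extracting a further subsequence, $u^{\ve} \rightharpoonup u$ in $W^{1,p}(\mcO)$, hence $u \in W^{1,p}(\mcO)$ and $\nabla u^{\ve} \rightharpoonup \nabla u$ in $L^p(\mcO;\R^d)$. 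It remains to prove
\[
\liminf_{\ve \to 0} \int_{\mcO} a(\xi/\ve)|\nabla u^{\ve}|^p \, d\xi \;\ge\; M_Y(a) \int_{\mcO} |\nabla u|^p \, d\xi.
\]

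This $\liminf$ inequality is the principal obstacle, since the product $a(\xi/\ve)|\nabla u^{\ve}|^p$ couples two weakly convergent oscillatory factors and does not pass to its product limit under weak-weak duality alone. The most robust route is two-scale convergence \`a la Nguetseng--Allaire: the bounded sequence $\nabla u^{\ve}$ admits a two-scale limit of the form $\nabla u(\xi) + \nabla_y u_1(\xi, y)$ with $u_1 \in L^p(\mcO; W^{1,p}_{\mathrm{per}}(Y)/\R)$, and lower semicontinuity of the two-scale integrand gives
\[
\liminf_{\ve\to 0} \int_{\mcO} a(\xi/\ve)|\nabla u^{\ve}|^p \, d\xi \;\ge\; \int_{\mcO}\!\int_{Y} a(y)\,|\nabla u(\xi) + \nabla_y u_1(\xi, y)|^p \, dy\, d\xi,
\]
and an inner cell-minimization in $u_1$ must be shown to produce the integrand $M_Y(a)|\nabla u|^p$. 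An alternative route is to use the Fenchel--Young inequality $|\nabla u^{\ve}|^p/p \ge \eta \cdot \nabla u^{\ve} - |\eta|^{p'}/p'$ against $a(\xi/\ve)$ for smooth test vector fields $\eta$, then optimize over $\eta$ via Fenchel--Moreau duality after passing to the limit; here the bilinear pairing $\int a(\xi/\ve)\eta \cdot \nabla u^{\ve}$ requires additional compactness---for instance a div--curl argument applied to a suitably corrected test field---to pass to its limit. Combining the liminf step with the recovery sequence from the easy direction yields the Mosco convergence in $L^p(\mcO)$, and the pointwise $\limsup$ inequality is already in hand.
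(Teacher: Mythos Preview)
Your recovery-sequence argument and the pointwise $\limsup$ claim coincide with the paper's: both take the constant sequence $u^\ve=u$ and use weak$^*$ convergence of $a(\cdot/\ve)$ to $M_Y(a)$.

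For the $\liminf$ direction, however, your proposal is only a sketch, and the first route you suggest cannot reach the stated target. The two-scale lower bound you write down, after minimising in $u_1$, produces the cell formula
\[
f_{\mathrm{hom}}(\xi)=\inf_{w\in W^{1,p}_{\mathrm{per}}(Y)}\frac{1}{|Y|}\int_Y a(y)\,|\xi+\nabla_y w(y)|^p\,dy,
\]
and taking $w=0$ shows $f_{\mathrm{hom}}(\xi)\le M_Y(a)|\xi|^p$, with strict inequality in general (already in one dimension for $p=2$ the cell minimum is the harmonic mean of $a$, not $M_Y(a)$). So two-scale lower semicontinuity gives a bound \emph{below} $M_Y(a)\int|\nabla u|^p$, not at it; the step you flag as ``must be shown'' in fact fails. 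Your Fenchel--Young alternative is closer in spirit but you leave the product limit $\int a(\xi/\ve)\,\eta\cdot\nabla u^\ve$ unresolved, invoking div--curl machinery without carrying it out.

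The paper's device is precisely to remove the oscillating coefficient from the cross term before passing to the limit. It writes $\nabla u^n\cdot\eta = \bigl(\eta/a(\xi/\ve_n)\bigr)\cdot\nabla u^n\, a(\xi/\ve_n)$ and applies Young's inequality with respect to the weighted measure $a(\xi/\ve_n)\,d\xi$:
\[
\int_{\mcO}\nabla u^n\cdot\eta\,d\xi \;\le\; \vp^n(u^n)+\frac{1}{q}\int_{\mcO}|\eta|^q\,a^{1-q}(\xi/\ve_n)\,d\xi.
\]
The left-hand side now contains no oscillating factor and passes to $\int\nabla u\cdot\eta$ by plain weak convergence of $\nabla u^n$ in $L^p$; the last integral converges by Riemann--Lebesgue to $\tfrac1q\,M_Y(a^{1-q})\int|\eta|^q$. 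The paper then compares $M_Y(a^{1-q})$ with $M_Y(a)^{1-q}$ via Jensen and chooses $\eta=M_Y(a)|\nabla u|^{p-2}\nabla u$. The key idea you are missing is this weighted Young inequality, which makes the weak--weak product issue disappear without any two-scale or compensated-compactness argument.
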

\begin{proof}
Let $\eps_{n}\to0$ and, by abuse of notation, set $\vp^{n}:=\vp^{\eps_{n}}$. Let $u^{n}\in L^{p}(\Ocal)$ such that $u^{n}\rightharpoonup u$ weakly in $L^{p}(\Ocal)$ for some $u\in L^{p}(\Ocal)$. W.l.o.g.
\[
\liminf_{n\to\infty}\vp^{n}(u^{n})<+\infty
\]
and for a non-relabeled subsequence $\vp^{n}(u^{n})<\infty$ and
\[
\lim_{n\to\infty}\vp^{n}(u^{n})=\liminf_{n\to\infty}\vp^{n}(u^{n})<+\infty.
\]
Hence,
\[
\rho\int_{\Ocal}|\nabla u^{n}(\xi)|^{p}\, d\xi\le\int_{\Ocal}a\left(\frac{\xi}{\ve_{n}}\right)|\nabla u^{n}(\xi)|^{p}\, d\xi\le C.
\]
Since $u^{n}$ is bounded in $W^{1,p}(\Ocal)$ a subsequence of $u^{n}$ converges weakly to some $u^{0}\in W^{1,p}(\Ocal)$. By the $L^{p}(\Ocal)$-weak convergence $u^{n}\rightharpoonup u$ we have $u^{0}=u\in W^{1,p}(\Ocal)$ and we conclude
\[
\int_{\Ocal}|\nabla u|^{p}\, d\xi\le C.
\]
By Young's inequality, for $\eta\in L^{q}(\Ocal;\R^{d})$, 
\[
\begin{split}\int_{\Ocal}\nabla u^{n}\eta\, d\xi & =\int_{\Ocal}\frac{\eta}{a\left(\frac{\xi}{\ve_{n}}\right)}\nabla u^{n}a\left(\frac{\xi}{\ve_{n}}\right)\, d\xi\\
 & \le\vp^{n}(u^{n})+\frac{1}{q}\int_{\Ocal}\left|\frac{\eta}{a\left(\frac{\xi}{\ve_{n}}\right)}\right|^{q}a\left(\frac{\xi}{\ve_{n}}\right)\, d\xi\\
 & =\vp^{n}(u^{n})+\frac{1}{q}\int_{\Ocal}|\eta|^{q}a^{1-q}\left(\frac{\xi}{\ve_{n}}\right)\, d\xi.
\end{split}
\]
Passing on to the limit, by \cite[Theorem 2.6]{CD99},
\[
\int_{\Ocal}\nabla u\eta\, d\xi\le\liminf_{n\to\infty}\vp^{n}(u^{n})+\frac{1}{q}\int_{\Ocal}M_{Y}(a^{1-q})|\eta|^{q}\, d\xi.
\]
Note that by Jensen's inequality, $M_{Y}(a^{1-q})\le M_{Y}(a)^{1-q}$. Hence, setting $\eta:=M_{Y}(a)|\nabla u|^{p-2}\nabla u\in L^{q}(\Ocal;\R^{d})$, yields,
\[
M_{Y}(a)\int_{\Ocal}|\nabla u|^{p}\, d\xi\le\liminf_{n\to\infty}\vp^{n}(u^{n})+\frac{{1}}{q}\int_{\Ocal}M_{Y}(a)|\nabla u|^{p}\, d\xi,
\]
and hence
\[
\vp^{\operatorname{hom}}(u)=\frac{M_{Y}(a)}{p}\int_{\Ocal}|\nabla u|^{p}\, d\xi\le\liminf_{n\to\infty}\vp^{n}(u^{n})
\]
and the first Mosco condition is proved. By \cite[Theorem 2.6]{CD99}, it is easy to see, that for all $u\in W^{1,p}(\Ocal)$,
\[
\lim_{n\to\infty}\frac{{1}}{p}\int_{\Ocal}a\left(\frac{\xi}{\ve_{n}}\right)|\nabla u|^{p}\, d\xi=\frac{M_{Y}(a)}{p}\int_{\Ocal}|\nabla u|^{p}\, d\xi.
\]
Hence,
\[
\limsup_{n\to\infty}\vp^{n}(u)\le\vp^{\operatorname{hom}}(u),
\]
for each $u\in L^{p}(\Ocal)$.
\end{proof}

\subsection{Stochastic fast diffusion equations\label{sec:homo_FDE}}

We consider the homogenization problem ($\ve\to0$) for stochastic fast diffusion equations of the type
\begin{align}
dX_{t} & =\D\left(a\left(\frac{{\xi}}{\eps}\right)X_{t}^{[m]}\right)dt+B(X_{t}^ {})dW_{t},\label{eq:SFDE-homo-1}\\
X_{0} & =x_{0},\nonumber 
\end{align}

with $m\in(0,1)$, on bounded, smooth domains $\mcO\subseteq\R^{d}$ with zero Dirichlet boundary conditions. Here, $a\in L^{\infty}(\R^{d})$ is periodic with respect to a cube $Y:=\prod_{i=1}^{d}[l_{i},r_{i})$, $l_{i}<r_{i}$, $1\le i\le d$ and bounded from below, i.e. $a\ge\rho>0$ for some constant $\rho>0.$ 

Note that in \cite{C11} the function $a$ was assumed to additionally satisfy: $a$ Lipschitz on $\bar{Y}$, $a\in C^{2}(Y)$ and $\D a\le0$. We do not require these additional assumptions. 

In this section we show that the solutions $X^{\ve}$ to \eqref{eq:SFDE-homo-1} converge to the unique continuous SVI solution to the homogenized limit
\begin{align}
dX_{t} & =M_{Y}(a)\D\left(X_{t}^{[m]}\right)dt+B(X_{t})dW_{t},\label{eq:SFDE-homo}\\
X_{0} & =x_{0}.\nonumber 
\end{align}
As in \cite{GR15} we define
\begin{align*}
L^{m+1}\cap H^{-1} & :=\left\{ v\in L^{m+1}\;:\;\int vhdx\le C\|h\|_{H_{0}^{1}},\ \forall h\in C_{c}^{1}(\mcO)\ \text{for some }C\ge0\right\} .
\end{align*}
For $u\in H^{-1}$ we set
\begin{align*}
\vp^{\ve}(u):= & \begin{cases}
\frac{{1}}{m+1}\int_{\Ocal}a\left(\frac{{\xi}}{\eps}\right)|u(\xi)|^{m+1}\, d\xi & \quad\text{if }u\in L^{m+1}\cap H^{-1}\\
\infty & \quad\text{otherwise. }
\end{cases}
\end{align*}
and 
\begin{align*}
\vp^{\operatorname{hom}}(u):= & \begin{cases}
\frac{M_{Y}(a)}{m+1}\int_{\Ocal}|u(\xi)|^{m+1}\, d\xi & \quad\text{if }u\in L^{m+1}\cap H^{-1}\\
\infty & \quad\text{otherwise. }
\end{cases}
\end{align*}

By \cite{RRW07} there is a unique variational solution $X^{\ve}$ to \eqref{eq:SFDE-homo-1} for each $\ve>0$. As in Remark \ref{rmk:varn_sol} it is easy to see that $X^{\ve}$ also is a continuous SVI solution to \eqref{eq:SFDE-homo-1} with $H=H^{-1}$, $S=L^{2}(\mcO)$. By \cite{GR15} there is a unique continuous SVI solution to \eqref{eq:SFDE-homo} with $H,S$ as before. 
\begin{thm}
Let $x_{0}\in L^{2}(\O,\mcF_{0};H)$ and let $X^{\ve}$, $X$ be the solutions to \eqref{eq:SFDE-homo-1}, \eqref{eq:SFDE-homo} respectively. Then
\[
X^{\ve}\rightharpoonup X\quad\text{in }L^{2}([0,T]\times\O;H)
\]
for $\ve\to0$.\end{thm}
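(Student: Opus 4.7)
The plan is to deduce the result from Theorem \ref{thm:SVI_stability} applied to $\vp^n := \vp^{\ve_n}$ for any sequence $\ve_n \searrow 0$ and $\vp := \vp^{\operatorname{hom}}$ in the Hilbert space $H = H^{-1}$ with $S = L^2(\mcO)$. By Proposition \ref{prop:random_Mosco}(ii), the task reduces to establishing (a) the pointwise control $\limsup_n \vp^{\ve_n}(u) \le \vp^{\operatorname{hom}}(u)$ for $u \in S$, (b) the uniform growth bound $\vp^{\ve_n}(u) \le C(1 + \vp^{\operatorname{hom}}(u) + \|u\|_S^2)$, and (c) Mosco convergence $\vp^{\ve_n} \to \vp^{\operatorname{hom}}$ in $H^{-1}$.

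Items (a) and (b) are immediate from the two-sided bound $\rho \le a \le \|a\|_\infty$: the growth estimate follows from $\vp^{\ve_n}(u) \le \|a\|_\infty/(m+1)\cdot\|u\|_{L^{m+1}}^{m+1}$ together with the embedding $L^2(\mcO) \hookrightarrow L^{m+1}(\mcO)$ on the bounded domain, while the pointwise convergence $\int_\mcO a(\xi/\ve_n)|u(\xi)|^{m+1}\, d\xi \to M_Y(a)\int_\mcO |u|^{m+1}\, d\xi$ for $u \in L^{m+1}$ follows from the weak-$\ast$ convergence $a(\cdot/\ve_n) \rightharpoonup M_Y(a)$ in $L^\infty(\R^d)$ (cf.~\cite[Theorem 2.6]{CD99}).

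The main work is ingredient (c), to be established in analogy with Theorem \ref{thm:mosco_homo_plp}. The recovery sequence for the $\limsup$ side can be taken as the constant $u^n \equiv u$, whose energies converge as in (a). For the $\liminf$ side, assume $u^n \rightharpoonup u$ in $H^{-1}$ with $\sup_n \vp^{\ve_n}(u^n) < \infty$; coercivity from $a \ge \rho$ gives a uniform bound on $\|u^n\|_{L^{m+1}(\mcO)}$, so a subsequence converges weakly in $L^{m+1}$ to $u$ (the limit being identified via the $H^{-1}$ convergence). A convex duality argument then yields, for any $\eta \in L^q(\mcO)$ with $q = (m+1)/m$, the Young-type bound
\begin{equation*}
\int_\mcO u^n \eta\, d\xi \le \vp^{\ve_n}(u^n) + \frac{1}{q}\int_\mcO a(\xi/\ve_n)^{1-q} |\eta|^q\, d\xi,
\end{equation*}
obtained from the factorization $u^n\eta = \bigl(a(\xi/\ve_n)^{1/(m+1)} u^n\bigr)\bigl(a(\xi/\ve_n)^{-1/(m+1)}\eta\bigr)$. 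Passing to $\liminf$ via the weak-$\ast$ convergence $a(\cdot/\ve_n)^{1-q} \rightharpoonup M_Y(a^{1-q})$ in $L^\infty$, then optimizing over $\eta$ with $\eta$ taken proportional to $|u|^{m-1}u$, produces the desired lower bound $\vp^{\operatorname{hom}}(u) \le \liminf_n \vp^{\ve_n}(u^n)$ after comparing $M_Y(a^{1-q})$ with $M_Y(a)^{1-q}$ through Jensen's inequality.

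The main obstacle is the delicate handling of this Jensen comparison so as to produce exactly the arithmetic mean $M_Y(a)$ in the final bound: since $1-q = -1/m < 0$, the function $t \mapsto t^{1-q}$ is convex on $(0,\infty)$, and the duality/optimization must be arranged so that the direction of the Jensen inequality is consistent with the target bound, mirroring the analogous step in $p$-Laplace homogenization. Once Mosco convergence is established, Theorem \ref{thm:SVI_stability} yields weak subsequential convergence $X^{\ve_n} \rightharpoonup X$ in $L^2([0,T]\times\Omega;H^{-1})$, and uniqueness of the continuous SVI solution to \eqref{eq:SFDE-homo} (by \cite{GR15}) upgrades this to convergence of the full family $X^\ve \rightharpoonup X$.
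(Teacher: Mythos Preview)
Your proposal follows the paper's proof essentially step for step: reduce via Proposition~\ref{prop:random_Mosco}(ii) and Theorem~\ref{thm:SVI_stability}, with the Mosco convergence packaged separately (the paper's Theorem~\ref{thm:mosco_homo_FDE}) and proved by exactly the weighted Young/duality estimate you sketch, followed by the Jensen comparison and the choice $\eta=M_Y(a)\,u^{[m]}$. Your items (a) and (b) match the paper's treatment as well.

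Your instinct to flag the Jensen step is, however, well founded, and deferring to the paper does not close it. You correctly note that $t\mapsto t^{1-q}=t^{-1/m}$ is convex on $(0,\infty)$; Jensen for a convex function then gives $M_Y(a)^{-1/m}\le M_Y(a^{-1/m})$, which is the \emph{reverse} of the inequality $M_Y(a^{-1/m})\le M_Y(a)^{-1/m}$ that the paper invokes at the corresponding place in the proof of Theorem~\ref{thm:mosco_homo_FDE}. Since the duality bound subtracts a term proportional to $M_Y(a^{-1/m})$, replacing it by the smaller quantity $M_Y(a)^{-1/m}$ is not legitimate, and the conclusion $\vp^{\operatorname{hom}}(u)\le\liminf_n\vp^{\ve_n}(u^n)$ does not follow from this argument. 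Indeed the $\liminf$ inequality can fail along oscillating sequences such as $u^n=a(\cdot/\ve_n)^{-1/(m+1)}$: here $u^n\rightharpoonup M_Y(a^{-1/(m+1)})$ in $H^{-1}$ while $\vp^{\ve_n}(u^n)=|\mcO|/(m+1)$, and for nonconstant $a$ one has $M_Y(a)\,M_Y(a^{-1/(m+1)})^{m+1}>1$, so $\vp^{\operatorname{hom}}$ of the limit exceeds the $\liminf$. Thus the obstacle you single out is a genuine gap, present in both your proposal and the paper's own proof, and it cannot be repaired merely by rearranging the optimization over $\eta$.
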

\begin{proof}
Using Theorem \ref{thm:mosco_homo_FDE} below, the proof is a direct application of Proposition \ref{prop:random_Mosco} and Theorem \ref{thm:SVI_stability}. \end{proof}
\begin{thm}
\label{thm:mosco_homo_FDE}For $\eps\searrow0$ we have that $\vp^{\ve}\to\vp^{\operatorname{hom}}$ in Mosco sense in $H^{-1}$. Furthermore, for all $u\in H^{-1}$, we have that
\[
\limsup_{\ve\to0}\vp^{\ve}(u)\le\vp^{\operatorname{hom}}(u).
\]
\end{thm}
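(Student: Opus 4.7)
The plan is to mirror the structure of Theorem~\ref{thm:mosco_homo_plp}, but working with the $L^{m+1}$--$H^{-1}$ pairing in place of $W^{1,p}$--$L^{p}$, so that the Young--Jensen manipulation acts on values $u$ rather than gradients $\nabla u$. I would verify the two Mosco conditions separately and notice that the recovery-sequence argument also delivers the pointwise $\limsup$-estimate.

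For the $\liminf$-condition, let $\ve_{n}\searrow 0$ and $u^{n}\rightharpoonup u$ weakly in $H^{-1}$ with $\liminf_{n\to\infty}\vp^{\ve_{n}}(u^{n})<\infty$. After passing to a subsequence realising the $\liminf$, the uniform coercivity $a\ge\rho>0$ forces $\sup_{n}\|u^{n}\|_{L^{m+1}(\mcO)}<\infty$, so a further subsequence converges weakly in $L^{m+1}$ to some $\tilde u$; testing against $C_{c}^{\infty}(\mcO)\subset L^{(m+1)/m}\cap H_{0}^{1}$ identifies $\tilde u$ with $u$, hence $u\in L^{m+1}\cap H^{-1}$. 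The central algebraic step is Young's inequality with conjugate exponents $m+1$ and $(m+1)/m$: for any $\eta\in L^{(m+1)/m}(\mcO)$, splitting $u^{n}\eta=(u^{n}a(\xi/\ve_{n})^{1/(m+1)})(\eta\,a(\xi/\ve_{n})^{-1/(m+1)})$ gives
\[
\int_{\mcO}u^{n}\eta\, d\xi\le\vp^{\ve_{n}}(u^{n})+\frac{m}{m+1}\int_{\mcO}|\eta|^{(m+1)/m}a(\xi/\ve_{n})^{-1/m}\, d\xi.
\]
Passing $n\to\infty$, the left-hand side converges by weak $L^{m+1}$-convergence of $u^{n}$ against the fixed $\eta\in L^{(m+1)/m}$, and on the right $a(\cdot/\ve_{n})^{-1/m}\rightharpoonup^{*}M_{Y}(a^{-1/m})$ in $L^{\infty}$ weak-$\ast$ (\cite[Theorem 2.6]{CD99}) against the fixed $L^{1}$-function $|\eta|^{(m+1)/m}$.

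I would then specialise $\eta:=M_{Y}(a)\,u^{[m]}\in L^{(m+1)/m}(\mcO)$ and invoke Jensen's inequality for the convex map $x\mapsto x^{-1/m}$ (the analogue of $x\mapsto x^{1-q}$ in the proof of Theorem~\ref{thm:mosco_homo_plp}) to absorb the remainder $\tfrac{m}{m+1}M_{Y}(a^{-1/m})M_{Y}(a)^{(m+1)/m}\int|u|^{m+1}$ into the term $M_{Y}(a)\int|u|^{m+1}$ coming from the left, yielding
\[
\frac{M_{Y}(a)}{m+1}\int_{\mcO}|u|^{m+1}\, d\xi\le\liminf_{n\to\infty}\vp^{\ve_{n}}(u^{n}).
\]

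For the recovery sequence and the pointwise $\limsup$-bound, the trivial choice $u^{\ve}:=u$ works: if $u\in L^{m+1}\cap H^{-1}$ then $|u|^{m+1}\in L^{1}(\mcO)$ and $a(\cdot/\ve)\rightharpoonup^{*}M_{Y}(a)$ in $L^{\infty}$ weak-$\ast$ (again by \cite[Theorem 2.6]{CD99}), so $\vp^{\ve}(u)\to\vp^{\operatorname{hom}}(u)$; outside $L^{m+1}$ both sides are $+\infty$. The main obstacle is the weak $L^{m+1}$-compactness step and the correct identification of weak limits under the weaker $H^{-1}$-convergence hypothesis, together with ensuring that Jensen applied to $M_{Y}(a^{-1/m})$ lines up in exactly the direction needed to absorb the Young remainder into $\vp^{\operatorname{hom}}$; once those are settled, the rest of the argument is algebraic and runs in exact parallel to the proof of Theorem~\ref{thm:mosco_homo_plp}.
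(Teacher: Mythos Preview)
Your outline tracks the paper's proof step for step: the coercivity bound $a\ge\rho$ to extract weak $L^{m+1}$-compactness, identification of the weak limit via testing against $C_c^\infty\subset L^{(m+1)/m}\cap H_0^1$, the Young-inequality duality followed by weak-$\ast$ averaging of $a(\cdot/\ve_n)^{-1/m}$ through \cite[Theorem 2.6]{CD99}, the specialisation $\eta=M_Y(a)\,u^{[m]}$, and the trivial recovery sequence $u^\ve\equiv u$ for the $\limsup$. In that sense you have reproduced the paper's argument faithfully.

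There is, however, one step you should look at again. You correctly identify $x\mapsto x^{-1/m}$ as convex on $(0,\infty)$ (since $-1/m<-1$), but Jensen's inequality for a convex $\phi$ reads $\phi(M_Y(a))\le M_Y(\phi(a))$, i.e.
\[
M_Y(a)^{-1/m}\ \le\ M_Y\bigl(a^{-1/m}\bigr),
\]
which is the \emph{opposite} of the direction needed to absorb the remainder $\tfrac{m}{m+1}M_Y(a^{-1/m})M_Y(a)^{(m+1)/m}\int_{\mcO}|u|^{m+1}\,d\xi$ into $M_Y(a)\int_{\mcO}|u|^{m+1}\,d\xi$. The paper's own proof invokes Jensen at the identical point in the identical direction, so your discrepancy with the paper is nil; but as a matter of logic the inequality as written points the wrong way. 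A two-scale competitor such as $u^n(\xi)=c\,a(\xi/\ve_n)^{-1/m}$ (with $c$ fixed so that $u^n\rightharpoonup$ const.\ weakly) makes the issue concrete: it drives the $\liminf$ down to the coefficient $M_Y(a^{-1/m})^{-m}$, which by the correct Jensen direction is $\le M_Y(a)$ with equality only when $a$ is constant.
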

\begin{proof}
The proof proceeds similar to Theorem \ref{thm:mosco_homo_plp}. For the readers convenience we include the proof. Let $\eps_{n}\to0$ and, by abuse of notation, set $\vp^{n}:=\vp^{\ve_{n}}$. Let $u^{n}\in L^{m+1}\cap H^{-1}$ such that $u^{n}\rightharpoonup u$ weakly in $H^{-1}$ for some $u\in H^{-1}$. W.l.o.g.
\[
\liminf_{n\to\infty}\vp^{n}(u^{n})<+\infty
\]
and for a non-relabeled subsequence $\vp^{n}(u^{n})<\infty$ and
\[
\lim_{n\to\infty}\vp^{n}(u^{n})=\liminf_{n\to\infty}\vp^{n}(u^{n})<+\infty.
\]
Hence,
\[
\rho\int_{\Ocal}|u^{n}(\xi)|^{m+1}\, d\xi\le\int_{\Ocal}a\left(\frac{\xi}{\ve_{n}}\right)|u^{n}(\xi)|^{m+1}\, d\xi\le C
\]
which implies that there is a subsequence (again denoted by $u^{n}$) that converges weakly to some $u^{0}\in L^{m+1}(\Ocal)$. By the $H^{-1}$ weak convergence $u^{n}\rightharpoonup u$ we have that $u^{0}=u$. In particular, we conclude that $u\in L^{m+1}\cap H^{-1}$ with 
\[
\int_{\Ocal}|u|^{m+1}\, d\xi\le C.
\]
By Young's inequality, for $\eta\in L^{\frac{m+1}{m}}(\Ocal)$, 
\[
\begin{split}\int_{\Ocal}u^{n}\eta\, d\xi & =\int_{\Ocal}\frac{\eta}{a\left(\frac{\xi}{\ve_{n}}\right)}u^{n}a\left(\frac{\xi}{\ve_{n}}\right)\, d\xi\\
 & \le\vp^{n}(u^{n})+\frac{m}{m+1}\int_{\Ocal}\left|\frac{\eta}{a\left(\frac{\xi}{\ve_{n}}\right)}\right|^{\frac{m+1}{m}}a\left(\frac{\xi}{\ve_{n}}\right)\, d\xi\\
 & =\vp^{n}(u^{n})+\frac{m}{m+1}\int_{\Ocal}|\eta|^{\frac{m+1}{m}}a^{-\frac{1}{m}}\left(\frac{\xi}{\ve_{n}}\right)\, d\xi.
\end{split}
\]
Passing on to the limit, by \cite[Theorem 2.6]{CD99},
\[
\int_{\Ocal}u\eta\, d\xi\le\liminf_{n\to\infty}\vp^{n}(u^{n})+\frac{m}{m+1}\int_{\Ocal}M_{Y}(a^{-\frac{1}{m}})|\eta|^{\frac{m+1}{m}}\, d\xi.
\]
Note that by Jensen's inequality, $M_{Y}(a^{-\frac{1}{m}})\le M_{Y}(a)^{-\frac{1}{m}}$. Hence, setting $\eta:=M_{Y}(a)u^{[m]}\in L^{\frac{m+1}{m}}$, yields,
\[
M_{Y}(a)\int_{\Ocal}|u|^{m+1}\, d\xi\le\liminf_{n\to\infty}\vp^{n}(u^{n})+\frac{m}{m+1}\int_{\Ocal}M_{Y}(a)|u|^{m+1}\, d\xi,
\]
and hence
\[
\vp^{\operatorname{hom}}(u)=\frac{M_{Y}(a)}{m+1}\int_{\Ocal}|u|^{m+1}\, d\xi\le\liminf_{n\to\infty}\vp^{n}(u^{n})
\]
and the first Mosco condition is proved. By \cite[Theorem 2.6]{CD99}, it is easy to see, that for all $u\in L^{m+1}$,
\[
\lim_{n\to\infty}\frac{{1}}{m+1}\int_{\Ocal}a\left(\frac{\xi}{\ve_{n}}\right)|u|^{m+1}\, d\xi=\frac{M_{Y}(a)}{m+1}\int_{\Ocal}|u|^{m+1}\, d\xi.
\]
Hence,
\[
\limsup_{n\to\infty}\vp^{n}(u)\le\vp^{\operatorname{hom}}(u),
\]
for all $u\in H^{-1}$.
\end{proof}

\appendix

\section{Moreau-Yosida approximation of singular powers\label{sec:Moreau-Yosida}}

Let $\psi(\xi):=\frac{1}{p}|\xi|^{p}$, $p\in[1,2)$ and $\phi:=\partial\psi$. We choose $\psi^{\d}$ to be the Moreau-Yosida approximation of $\psi$ (cf.~\cite[p. 266]{A84}). Then $\phi^{\d}:=\partial\psi^{\d}$ is the Yosida approximation of $\phi$, i.e.
\[
\phi^{\d}(\xi)=\frac{1}{\d}(\xi-R_{\d}\xi)\in\phi(R{}_{\d}\xi)\quad\forall\xi\in\R^{d},
\]
where $R_{\d}(\xi)$ is the resolvent of $\phi$, that is, the unique solution $\zeta$ to
\[
\zeta+\delta\phi(\zeta)\ni\xi.
\]
We note that
\begin{equation}
|\phi^{\d}(\xi)|\le|\phi(\xi)|:=\inf\{|\eta|:\eta\in\phi(\xi)\}\quad\forall\xi\in\R^{d}.\label{eq:phi-delta-bound}
\end{equation}
Moreover,
\begin{align}
\psi^{\d}(\xi) & =\frac{1}{2\d}|\xi-R_{\d}\xi|^{2}+\psi(R_{\d}\xi)\label{eq:moreau_yos}\\
 & =\frac{\d}{2}|\phi^{\d}(\xi)|^{2}+\psi(R_{\d}\xi)\quad\forall\xi\in\R^{d}.\nonumber 
\end{align}
Hence,
\begin{align}
\psi(R_{\d}\xi) & \le\psi^{\d}(\xi)\le\psi(\xi)\quad\forall\xi\in\R^{d}.\label{eq:MY-ineq}
\end{align}
By the subgradient inequality we have
\[
(\eta,R_{\d}\xi-\xi)+\psi(\xi)\le\psi(R_{\d}\xi)
\]
for all $\eta\in\phi(\xi)$. Hence, using the definition of $\phi^{\d}$ 
\begin{align*}
\psi(\xi)-\psi(R_{\d}\xi) & \le-(\eta,R_{\d}\xi-\xi)\\
 & \le|\eta|\d|\phi^{\d}(\xi)|
\end{align*}
for every $\eta\in\phi(\xi)$. Hence, using \eqref{eq:phi-delta-bound} and \eqref{eq:MY-ineq} and noting that $p\in[1,2)$, we obtain
\begin{align}
|\psi(\xi)-\psi^{\d}(\xi)| & \le\d|\phi(\xi)|^{2}\label{eq:yosida_convergence-2}\\
 & \le C\d(1+\psi(\xi))\quad\forall\xi\in\R^{d}.\nonumber 
\end{align}
We note
\begin{align}
(\phi^{\d_{1}}(\xi)-\phi^{\d_{2}}(\z))\cdot(\xi-\z)= & (\phi^{\d_{1}}(\xi)-\phi^{\d_{2}}(\z))\cdot(R_{\d_{1}}\xi-R_{\d_{2}}\z)\nonumber \\
 & +(\phi^{\d_{1}}(\xi)-\phi^{\d_{2}}(\z))\cdot(\xi-R_{\d_{1}}\xi-(\z-R_{\d_{2}}\z))\label{eq:phi-bound-1-1}\\
\ge & (\phi^{\d_{1}}(\xi)-\phi^{\d_{2}}(\z))\cdot(\d_{1}\phi^{\d_{1}}(\xi)-\d_{2}\phi^{\d_{2}}(\z))\nonumber \\
\ge & -2(\d_{1}+\d_{2})\left(|\phi^{\d_{1}}(\xi)|^{2}+|\phi^{\d_{2}}(\z)|^{2}\right),\nonumber 
\end{align}
for all $\xi,\z\in\R^{d}$. Since
\[
|\phi^{\d_{1}}(\xi)|^{2}\le|\phi(\xi)|^{2}\le C(1+|\xi|^{2})
\]
we have that
\begin{align}
(\phi^{\d_{1}}(\xi)-\phi^{\d_{2}}(\z))\cdot(\xi-\z) & \ge-C(\d_{1}+\d_{2})(1+|\xi|^{2}+|\z|^{2}),\label{eq:monotone_Y_bound}
\end{align}
for all $\xi,\z\in\R^{d}$.

\section{Mosco convergence of integral functionals\label{sec:Random-Mosco-convergence}}

Let $H$ be a separable Hilbert space and $\varphi:H\to[0,+\infty]$ be a proper, l.s.c., convex functional. By \cite[Theorem 2.8]{B10} the subdifferential $\partial\vp$ is a maximal monotone operator on $H$. For $\lambda>0$, $x\in H$ we define the resolvent $R_{\lambda}^{\partial\varphi}(x)$ as the unique solution $y$ to
\[
y+\lambda\partial\varphi(y)\ni x.
\]
In the following let $\vp^{n}$ be a sequence of proper, l.s.c., convex functionals. 
\begin{defn}
\label{def:Mosco-convergence}We say that $\vp^{n}\to\varphi$ \emph{in Mosco sense} as $n\to\infty$ if
\begin{enumerate}
\item For every sequence  $u^{n}\in H$ such that $u^{n}\rightharpoonup u$ weakly for some $u\in H$ it holds that
\[
\liminf_{n\to\infty}\vp^{n}(u^{n})\ge\varphi(u),
\]

\item For every $v\in H$ there exists a sequence $v^{n}\in H$ such that $v^{n}\to v$ strongly and
\[
\limsup_{n\to\infty}\vp^{n}(v^{n})\le\varphi(v).
\]

\end{enumerate}
\end{defn}

\begin{defn}
~
\begin{enumerate}
\item We say that $\partial\vp^{n}\to\partial\varphi$ \emph{in the strong resolvent sense} if for each $x\in H$, $\lambda>0$ the resolvents converge, i.e. 
\[
R_{\lambda}^{\partial\vp^{n}}(x)\to R_{\lambda}^{\partial\varphi}(x)\quad\text{for }n\to\infty.
\]

\item We say that condition \emph{(N)} holds if there exists a sequence $(u^{n},v^{n})\in H\times H$ and an $(u,v)\in H\times H$ such that $v^{n}\in\partial\vp^{n}(u^{n})$ for all $n\in\N$ and $v\in\partial\varphi(u)$ with $u^{n}\to u$ and $v^{n}\to v$.
\end{enumerate}
\end{defn}
If $0\in\partial\vp^{n}(0)$ for all $n\in\N$ and $0\in\partial\varphi(0)$, then condition \emph{(N)} is trivially satisfied. From \cite[Theorem 3.26]{A84} we recall
\begin{thm}
\label{thm:Mosco-Theorem}We have $\vp^{n}\to\varphi$ in Mosco sense if and only if $\partial\vp^{n}\to\partial\varphi$ in the strong resolvent sense and condition \emph{(N)} holds.
\end{thm}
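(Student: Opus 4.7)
The plan is to exploit the classical correspondence between Mosco convergence of proper, l.s.c., convex functionals on a Hilbert space and graph convergence (equivalently, strong resolvent convergence) of their subdifferentials, as developed by Br\'ezis and Attouch. Since a maximal monotone subdifferential determines its convex primitive only up to an additive constant, the extra hypothesis (N) is precisely what is needed to anchor this constant consistently along the sequence, so that graph convergence can be integrated back to convergence of the potentials.

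For the forward direction, assume $\vp^{n}\to\varphi$ in Mosco sense. Fix $x\in H$ and $\lambda>0$ and recall that $R_{\lambda}^{\partial\vp^{n}}(x)$ is the unique minimizer of the strongly convex functional $\Phi^{n}(v):=\tfrac{1}{2\lambda}\|v-x\|_{H}^{2}+\vp^{n}(v)$. The continuous quadratic term preserves Mosco convergence, so $\Phi^{n}\to\Phi$ in Mosco sense; a standard $\Gamma$-convergence-of-minimizers argument (liminf from Mosco condition (i), recovery sequence from Mosco condition (ii)) then yields strong convergence of the minimizers, which is exactly the strong resolvent convergence. To verify (N), pick any $(u,v)\in\partial\varphi$, set $u^{n}:=R_{1}^{\partial\vp^{n}}(u+v)$ and $v^{n}:=u+v-u^{n}\in\partial\vp^{n}(u^{n})$. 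By the resolvent convergence just established, $u^{n}\to R_{1}^{\partial\varphi}(u+v)=u$, hence $v^{n}\to v$.

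For the converse, assume strong resolvent convergence of $\partial\vp^{n}$ together with (N). Work with the Yosida approximations $\vp^{n}_{\lambda}$, whose Fr\'echet derivatives $\nabla\vp^{n}_{\lambda}(y)=\lambda^{-1}(y-R_{\lambda}^{\partial\vp^{n}}(y))$ are $\lambda^{-1}$-Lipschitz in $y$ (uniformly in $n$) and converge pointwise by hypothesis. Integrating these gradients along segments, and using (N) to pin down the additive constant via the explicit identity
\[
\vp^{n}_{\lambda}(u^{n}+\lambda v^{n})=\tfrac{\lambda}{2}\|v^{n}\|_{H}^{2}+\vp^{n}(u^{n})
\]
evaluated at the approximating pair supplied by (N), one obtains pointwise, and in fact Mosco, convergence $\vp^{n}_{\lambda}\to\varphi_{\lambda}$ for every fixed $\lambda>0$. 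Finally, since $\vp^{n}_{\lambda}\nearrow\vp^{n}$ and $\varphi_{\lambda}\nearrow\varphi$ as $\lambda\downarrow 0$, a standard diagonal extraction (see \cite[Theorem 3.26]{A84} for the bookkeeping) upgrades this to the desired Mosco convergence $\vp^{n}\to\varphi$.

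The main obstacle lies in this last anchoring step: strong resolvent convergence is completely insensitive to the addition of arbitrary (even $n$-dependent and divergent) constants to the $\vp^{n}$, so without an extra hypothesis there is no hope of recovering Mosco convergence of the potentials themselves. Condition (N) is the minimal amount of single-point information in graph form that suffices to fix these constants uniformly, and the technical heart of the argument is the careful transfer of this one-point datum into control of the integration constants in the Yosida approximations, first at fixed $\lambda$ and then in the limit $\lambda\downarrow 0$.
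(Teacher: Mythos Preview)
The paper does not give its own proof of this statement: it is recalled verbatim from \cite[Theorem~3.26]{A84}, so there is nothing in the paper to compare your argument against beyond the citation itself. Your forward direction is correct and is essentially Attouch's argument: adding the strongly convex quadratic preserves Mosco convergence, minimizers of strongly convex Mosco-convergent functionals converge strongly, and the construction $u^{n}=R_{1}^{\partial\vp^{n}}(u+v)$ produces the graph-approximating sequence for~(N).

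The converse direction, however, has a genuine gap that traces back to the way condition~(N) is formulated in the paper. Your anchoring identity
\[
\vp^{n}_{\lambda}(u^{n}+\lambda v^{n})=\tfrac{\lambda}{2}\|v^{n}\|_{H}^{2}+\vp^{n}(u^{n})
\]
is correct, but to pass to the limit in it you need $\vp^{n}(u^{n})\to\varphi(u)$, and this is \emph{not} part of condition~(N) as written here --- only $u^{n}\to u$ and $v^{n}\to v$ are assumed. You yourself observe that resolvent convergence is blind to additive constants; graph convergence at a single point is equally blind. A concrete counterexample on $H=\R$ is $\vp^{n}(x)=\tfrac{1}{2}x^{2}+n$ and $\varphi(x)=\tfrac{1}{2}x^{2}$: the subdifferentials coincide, (N) holds with $u^{n}=v^{n}=0$, yet Mosco convergence fails. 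In Attouch's original formulation the normalization condition additionally requires $\vp^{n}(u^{n})\to\varphi(u)$, and with that extra datum your integration-of-Yosida-gradients strategy goes through. So either you must strengthen~(N) to include function-value convergence (as Attouch does), or impose a uniform normalization such as $\vp^{n}(0)=\varphi(0)=0$ --- which is what the paper tacitly relies on in every application (cf.\ the remark preceding the theorem).
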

Let $(\Omega,\mathcal{{A}},\mu)$ be a complete, totally $\sigma$-finite measure space and for $u\in L^{2}(\Omega,\mu;H)$ let 
\begin{align*}
\bar{\varphi}(u): & =\int_{\Omega}\varphi(u(\omega))\,\mu(d\omega)\\
\bar{\varphi}^{n}(u): & =\int_{\Omega}\vp^{n}(u(\omega))\,\mu(d\omega).
\end{align*}
Note that $\bar{\varphi},\bar{\varphi}^{n}$ define convex, l.s.c., proper functionals on $L^{2}(\Omega,\mu;H)$.
\begin{thm}
\label{thm:random-Mosco}Suppose either that condition \emph{(N)} holds for $\bar{\varphi}^{n}$, $\bar{\varphi}$ or that $\mu$ is a finite measure. Then $\vp^{n}\to\varphi$ in Mosco sense implies that $\bar{\varphi}^{n}\to\bar{\varphi}$ in Mosco sense in $L^{2}(\Omega,\mu;H)$.\end{thm}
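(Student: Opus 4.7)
The plan is to verify, at the level of $L^{2}(\Omega,\mu;H)$, the two conditions of Theorem \ref{thm:Mosco-Theorem}: that $\partial\bar{\vp}^{n}\to\partial\bar{\vp}$ in the strong resolvent sense and that condition (N) holds for $\bar{\vp}^{n},\bar{\vp}$. The main tool is a pointwise representation of the resolvent. Since the integrand $\vp$ does not depend on $\omega$, Rockafellar's identity for normal convex integrands gives
\[
\partial\bar{\vp}(u)=\{\xi\in L^{2}(\Omega,\mu;H)\colon\xi(\omega)\in\partial\vp(u(\omega))\;\mu\text{-a.e.}\},
\]
equivalently
\[
R_{\lambda}^{\partial\bar{\vp}}(x)(\omega)=R_{\lambda}^{\partial\vp}(x(\omega))\quad\mu\text{-a.e.},\qquad x\in L^{2}(\Omega,\mu;H),\;\lambda>0.
\]
This can also be seen directly, since the strictly convex functional $v\mapsto\int_{\Omega}\bigl(\tfrac{1}{2\lambda}\|v(\omega)-x(\omega)\|_{H}^{2}+\vp(v(\omega))\bigr)\,\mu(d\omega)$ is minimized pointwise by $R_{\lambda}^{\partial\vp}(x(\cdot))$, which is $\mu$-measurable by the Lipschitz continuity of $R_{\lambda}^{\partial\vp}$ on $H$.

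Now fix $x\in L^{2}(\Omega,\mu;H)$. Theorem \ref{thm:Mosco-Theorem} applied to $\vp^{n}\to\vp$ in $H$ gives $R_{\lambda}^{\partial\vp^{n}}(y)\to R_{\lambda}^{\partial\vp}(y)$ for every $y\in H$; specializing to $y=x(\omega)$ yields $\mu$-a.e.\ convergence of $R_{\lambda}^{\partial\bar{\vp}^{n}}(x)(\omega)$ to $R_{\lambda}^{\partial\bar{\vp}}(x)(\omega)$ in $H$. The task is then to upgrade this to strong $L^{2}$ convergence via Lebesgue's dominated convergence. By $1$-Lipschitz continuity of each resolvent,
\[
\|R_{\lambda}^{\partial\vp^{n}}(x(\omega))\|_{H}\le\|x(\omega)\|_{H}+\|R_{\lambda}^{\partial\vp^{n}}(0)\|_{H},
\]
and $\|R_{\lambda}^{\partial\vp^{n}}(0)\|_{H}$ is bounded in $n$ (being a strongly convergent sequence in $H$). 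If $\mu(\Omega)<\infty$ this immediately provides the $L^{1}(\mu)$-majorant $C(1+\|x(\cdot)\|_{H}^{2})$. In the purely $\sigma$-finite case one invokes the assumed condition (N) for $\bar{\vp}^{n},\bar{\vp}$: given $(u^{n},v^{n})\to(u^{\ast},v^{\ast})$ in $L^{2}\times L^{2}$ with $v^{n}\in\partial\bar{\vp}^{n}(u^{n})$, the pointwise characterization of $\partial\bar{\vp}^{n}$ yields $u^{n}(\omega)=R_{\lambda}^{\partial\vp^{n}}(u^{n}(\omega)+\lambda v^{n}(\omega))$ $\mu$-a.e., and contractivity then gives
\[
\|R_{\lambda}^{\partial\vp^{n}}(x(\omega))\|_{H}\le\|x(\omega)\|_{H}+2\|u^{n}(\omega)\|_{H}+\lambda\|v^{n}(\omega)\|_{H},
\]
from which a common $L^{2}$-dominant is extracted by passing to a subsequence that converges both in $L^{2}$ and $\mu$-a.e.\ (Vitali).

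It remains to check condition (N) for $\bar{\vp}^{n},\bar{\vp}$; in the $\sigma$-finite branch this is hypothesized, while in the finite-measure branch one lifts a pair $(a^{n},b^{n})\to(a,b)$ in $H\times H$ with $b^{n}\in\partial\vp^{n}(a^{n})$ (supplied by Theorem \ref{thm:Mosco-Theorem} in $H$) to the constant functions $u^{n}(\omega):=a^{n}$, $v^{n}(\omega):=b^{n}$, which lie in $L^{2}(\Omega,\mu;H)$ exactly because $\mu(\Omega)<\infty$, and satisfy $v^{n}\in\partial\bar{\vp}^{n}(u^{n})$ by the pointwise characterization. A second application of Theorem \ref{thm:Mosco-Theorem}, now in $L^{2}(\Omega,\mu;H)$, concludes the proof. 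The principal obstacle is the transition from $\mu$-a.e.\ to strong $L^{2}$ convergence of the resolvents in the purely $\sigma$-finite case: the naive contraction bound against $0$ produces a non-integrable constant term, and condition (N) is precisely what replaces it by the integrable comparison term furnished by $(u^{n},v^{n})$.
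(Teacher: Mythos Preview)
Your proof follows exactly the paper's route: represent $\partial\bar{\varphi}$ and its resolvent pointwise via Rockafellar, deduce $\mu$-a.e.\ resolvent convergence from Mosco convergence of $\varphi^{n}$ in $H$ (Theorem~\ref{thm:Mosco-Theorem}), upgrade to $L^{2}$ by dominated convergence, verify condition (N) for $\bar{\varphi}^{n},\bar{\varphi}$, and conclude by a second application of Theorem~\ref{thm:Mosco-Theorem}. The only divergence is in the DCT majorant: the paper invokes the bound $\|R_{\lambda}^{\partial\varphi^{n}}(x)\|_{H}\le\|x\|_{H}$ directly (tacitly using $0\in\partial\varphi^{n}(0)$, which holds throughout the paper since $\varphi^{n}\ge 0=\varphi^{n}(0)$ in all applications), whereas you split into cases and, in the $\sigma$-finite branch, build the integrable majorant from the condition~(N) pair $(u^{n},v^{n})$ via nonexpansiveness and a subsequence argument---this is slightly more careful but the strategy is identical.
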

\begin{proof}
\noindent We follow similar ideas as in \cite{Att78}.

\noindent \textit{Step 1:} By \cite[Theorem 21]{R74} the subdifferential of $ $$\bar{\varphi}$ is given by
\[
\partial\bar{\varphi}(\bar{x}):=\{\bar{v}\in L^{2}(\Omega,\mu;H):\,\bar{v}(\omega)\in\partial\varphi(\bar{x}(\omega))\quad\text{for\,}\mu\text{-a.a.\,}\o\in\O\}.
\]
Let $\bar{x}\in L^{2}(\Omega,\mu;H)$, $\lambda>0$. By definition, the resolvent $R_{\lambda}^{\partial\bar{\varphi}}(\bar{x})$ of $\partial\bar{\varphi}$ is the unique solution $\bar{y}\in L^{2}(\Omega,\mu;H)$ of 
\[
\bar{y}+\lambda\partial\bar{\varphi}(\bar{y})\ni\bar{x}.
\]
Due to the characterization of $\partial\bar{\varphi}$ above this is equivalent to 
\[
\bar{y}(\o)+\lambda\partial\varphi(\bar{y}(\o))\ni\bar{x}(\o)\quad\text{for\,}\mu\text{-a.a.\,}\o\in\O,
\]
i.e. 
\[
\bar{y}(\o)=R_{\lambda}^{\partial\varphi}(\bar{x}(\o))\quad\text{for\,}\mu\text{-a.a.\,}\o\in\O.
\]
Hence,
\[
\left(R_{\lambda}^{\partial\bar{\varphi}}(\bar{x})\right)(\omega)=R_{\lambda}^{\partial\varphi}(\bar{x}(\omega))\quad\text{{for\;}}\mu\text{{-a.a.\;}}\omega\in\Omega.
\]

\noindent \textit{Step 2: }By Theorem \ref{thm:Mosco-Theorem}, for all $x\in H$, $\lambda>0$ we have that 
\[
R_{\lambda}^{\partial\vp^{n}}(x)\to R_{\lambda}^{\partial\varphi}(x)\quad\text{for }n\to\infty
\]
and condition \emph{(N)} holds for $\vp^{n}$, $\varphi$. If $\mu$ is a finite measure, condition \emph{(N)} for $\vp^{n}$, $\varphi$ implies condition \emph{(N)} for $\bar{\varphi}^{n}$, $\bar{\varphi}$. Otherwise it holds by assumption. Using step one we observe that 
\begin{align*}
\left(R_{\lambda}^{\partial\bar{\varphi}^{n}}(\bar{x})\right)(\omega) & =R_{\lambda}^{\partial\vp^{n}}(\bar{x}(\omega))\\
 & \to R_{\lambda}^{\partial\varphi}(\bar{x}(\omega))\\
 & =\left(R_{\lambda}^{\partial\bar{\varphi}}(\bar{x})\right)(\omega)\quad{\text{{for}}\;}\mu\text{{-a.a.\;}}\omega\in\Omega,
\end{align*}
for $n\to\infty.$ By the contraction property of the resolvent (that is $\|R_{\lambda}^{\partial\vp^{n}}(x)\|_{H}\le\|x\|_{H}$ for all $x\in H)$ and by Lebesgue's dominated convergence theorem we conclude

\noindent 
\[
R_{\lambda}^{\partial\bar{\varphi}^{n}}(\bar{x})\to R_{\lambda}^{\partial\bar{\varphi}}(\bar{x})\quad\text{in }L^{2}(\Omega,\mu;H),
\]
for $n\to\infty.$ Applying Theorem \ref{thm:Mosco-Theorem} again, we get the desired convergence $\bar{\varphi}^{n}\to\bar{\varphi}$ in Mosco sense in $L^{2}(\Omega,\mu;H)$ as $n\to\infty$.
\end{proof}

\def\cprime{$'$}

\end{document}